\newcommand{\mysection}[1]{\section{#1}
\setcounter{equation}{0}}
\newtheorem{theorem}{Theorem}[section]
\newtheorem{corollary}[theorem]{Corollary}
\newtheorem{lemma}[theorem]{Lemma}
\theoremstyle{definition}
\newtheorem{remark}[theorem]{Remark}
\theoremstyle{definition}
\theoremstyle{definition}
\newtheorem{assumption}[theorem]{Assumption}
\def\dashint{\operatorname%
{\,\,\text{\bf--}\kern-.98em\DOTSI\intop\ilimits@\!\!}}
\def\sfg{{\sf g}}
\def\bR{\mathbb{R}}
\def\bZ{\mathbb{Z}}
\def\bM{\mathbb{M}}
\def\bC{\mathbb{C}}
\def\cD{\mathcal{D}}
\def\cH{\mathcal{H}}
\def\cQ{\mathcal{Q}}
\newcommand{\set}[1]{\left\{#1\right\}}
\newcommand{\abs}[1]{\left\lvert#1\right\rvert}
\newcommand{\osc}{\mathop{\hbox{osc}}}
\begin{document}
\title[Parabolic equations]{Parabolic equations with variably partially VMO coefficients}

\author[H. Dong]{Hongjie Dong}
\address[H. Dong]{Division of Applied Mathematics, Brown University, 182 George Street, Providence, RI 02912, USA}
\email{Hongjie\_Dong@brown.edu}
\thanks{The work of the author was partially supported by
NSF Grant DMS-0635607 from IAS and NSF Grant DMS-0800129.}

\subjclass{35K15,35R05}

\keywords{Second-order equations, vanishing mean oscillation, partially VMO coefficients, Sobolev spaces}

\begin{abstract}
We prove the $W^{1,2}_{p}$-solvability of second order parabolic equations in nondivergence form in the whole space for $p\in (1,\infty)$. The leading coefficients are assumed to be measurable in
one spatial direction and have vanishing mean oscillation (VMO) in the orthogonal directions and the time variable in each small parabolic cylinder with the direction depending on the cylinder. This extends a recent result by Krylov \cite{Kr08} for elliptic equations and removes the restriction that $p>2$.
\end{abstract}

\maketitle

\mysection{Introduction}
                                        \label{secIntro}
We consider the $W^{1,2}_{p}$-solvability of parabolic equations in nondivergence form:
\begin{equation}
                                                \label{parabolic}
P u-\lambda u=f,
\end{equation}
where $\lambda\ge 0$ is a constant, $f\in L_p$, and $P$ is a uniformly nondegenerate parabolic operator with bounded coefficients:
\begin{equation*}
P u=-u_t+a^{ij}D_{ij}u+b^{j}D_ju+cu.
\end{equation*}

The $L_p$ theory of second order parabolic and elliptic equations has been studied extensively under various regularity assumptions on the coefficients. For equations with uniformly continuous leading coefficients, the solvability has been known for a long time; see, for example, Agmon, Douglis and Nirenberg \cite{ADN64} and Lady\v{z}enskaja, Solonnikov and Ural'ceva \cite{LSU}. With VMO coefficients, the solvability theorems of nondivergence form equations were established in early 1990s by Chiarenza, Frasca and Longo \cite{CFL1,CFL2} and Bramanti and Cerutti \cite{BC93}. The main technical tool in these papers was the theory of singular integrals, in particular, certain estimates of Calder\'on-Zygmund theorem and the Coifman-Rochberg-Weiss commutator theorem. For divergence form equations with VMO/BMO coefficients, we refer the reader to Byun and Wang \cite{ByunWang} and references therein.

On the contrary, the theory of elliptic and parabolic equations with {\em partially} VMO coefficients was quite new and originated by Kim and Krylov \cite{KimKrylov07} and \cite{KK2}.
In \cite{KimKrylov07}, the authors established the $W^2_p$-solvability of elliptic equations in nondivergence form under the assumption that the coefficients $a^{ij}$ are measurable with respect to $x^1$ and VMO with respect to the remaining variables. This result was extended to parabolic equations by the same authors in \cite{KK2}, under the assumption that $a^{ij}$ are measurable with respect to $x^1$ and VMO with respect to the remaining spatial variables and the time variable. The arguments in \cite{KimKrylov07} and \cite{KK2} are based on the method previous developed in Krylov \cite{Krylov_2005}, in which the author gave a unified approach of studying the $L_p$ solvability of both divergence and nondivergence form parabolic equations with leading coefficients measurable in the time variable and VMO
in spatial variables. Unlike the arguments in \cite{CFL1, CFL2, BC93}, the proofs in \cite{Krylov_2005} rely mainly on pointwise estimates of sharp functions of spatial derivatives of solutions (see also \cite{Krylov_2007_mixed_VMO}). We also mention that the results in \cite{KK2, Krylov_2005, Krylov_2007_mixed_VMO} have been improved in Kim \cite{Kim07, Kim07a, Kim07b}, in which most leading coefficients are measurable in the time variable and one spatial variable, and VMO in the other variables. In \cite{DongKrylov}, the $W^2_p$-solvability is obtained for equations with leading coefficients measurable in two spatial variables and VMO in the others, when $p>2$ is sufficiently close to $2$.

The $W^1_p$ and $\cH^1_{p}$ solvability of elliptic and parabolic equations in divergence form with partially BMO coefficients are obtained in recent \cite{DongKim08a} and \cite{Dong08}.

The result in \cite{KimKrylov07} was generalized very recently by Krylov \cite{Kr08} to nondivergence form elliptic equations with {\em variably} partially VMO coefficients. More precisely, the leading coefficients are assumed to be measurable in one direction and VMO in the orthogonal directions in each small ball with the direction depending on the ball. Roughly speaking, the main idea in \cite{Kr08} is to use in a localized way a pointwise sharp function estimate of a portion of the Hessian $D^2u$, proved in \cite{KimKrylov07}, and apply a generalized version of the Fefferman-Stein theorem on sharp functions.

As is pointed out by the author, a restriction of the result in \cite{Kr08} (and also in articles mentioned above regarding nondivergence equations with partially VMO coefficients) is that $p$ has to be greater than $2$. This restriction is due to the following reason. The sharp function estimate in \cite{KimKrylov07} is deduced from the $W^2_2$-solvability of equations with $a^{ij}$ depending  only on $x^1$, which is obtained by using the method of Fourier transforms. In turn, the right-hand side of the estimate contains maximal functions of $q$-th power of $D^2 u$ for some $q>2$, which can be made arbitrarily close to $2$. Therefore, to apply the Fefferman-Stein theorem and the Hardy-Littlewood maximal function theorem one requires $p\ge q>2$.

It is natural to ask is if we still have the $W^2_p$-solvability under the same assumptions of the coefficients when $p$ goes below $2$. In this article, we give a positive answer to this question. We obtain the $W^2_p$-solvability of nondivergence form elliptic equations for any $p\in (1,\infty)$ (stated in Theorem \ref{thm1b}), under the same assumptions as in \cite{Kr08}, i.e. the leading coefficients are variably partially VMO. In fact, we shall establish the corresponding $W^{1,2}_p$-solvability of parabolic equations, from which the result of elliptic equations follows in a standard way. Here the leading coefficient of parabolic equations are assumed to be measurable in
one spatial direction and have vanishing mean oscillation in the orthogonal directions and the time variable in each small parabolic cylinder with the direction depending on the cylinder. For the precise statement of the result, see Theorem \ref{thm1}. 
Furthermore, in the spirit of \cite{Krylov_2007_mixed_VMO}, we also obtain the solvability of parabolic equations in mixed-norm Sobolev spaces $W^{1,2}_{q,p}$ when $q\ge p$ under the same assumptions (see Theorem \ref{thm3}). We note that these results generalize the result in \cite{KK2} as well.

Our proofs follow the approach in \cite{Kr08}. In order to go below $2$, we first establish for any $p\in (1,\infty)$ the $W^{1,2}_p$-solvability of nondivergence form parabolic equations with $a^{ij}$ depending only on $x^1$ (stated in Theorem \ref{thm2}). For this purpose, our idea is that in this situation the equation can be rewritten into a divergence form after a suitable change of variables. This enables us to apply a result recently proved in \cite{Dong08} for divergence form parabolic equations with partially VMO coefficients. Next, to get a sharp function estimate we need to bound the H\"older norms of a portion of $D^2u$ when $u$ satisfies the homogeneous equation (see Theorem \ref{thm3.5}). To this end, we use a bootstrap argument with the aid of Theorem \ref{thm2.2} and an embedding type estimate. We combine Theorem \ref{thm2} and \ref{thm3.5} to prove Theorem \ref{thm1} by applying the aforementioned generalized Fefferman-Stein theorem obtained in \cite{Krylov_2007_mixed_VMO}.

A brief outline of the paper: in the next section, we introduce the notation and state the main results, Theorem \ref{thm1}, \ref{thm1b} and \ref{thm2}. Section \ref{sec3}
contains a few preliminary estimates, including Theorem \ref{thm2.2} and \ref{thm3.5} which are the main ingredients of the proof. We finish the proof of the
$W^{1,2}_p$-solvability in Section \ref{sec4} by combining the results in the previous section. Finally we state and prove the $W^{1,2}_{q,p}$-solvability of parabolic equations in the last section.


\mysection{Notation and main results}
                                        \label{secMain}

We begin the section by introducing some notation. Let $d\ge 2$ be an integer. A typical point in ${\mathbb R}^d$ is
denoted by $x=(x^1,x^2,\cdots,x^d)=(x^1,x')$. We set
$$
D_{i}u=u_{x^i},\quad D_{ij}u=u_{x^ix^j},\quad
D_t u=u_t.
$$
By $Du$ and $D^{2}u$ we
mean the gradient and the Hessian matrix
of $u$. On many occasions we need to take these objects relative to only
part of variables. We also use the following notation:
$$
D_{x'}u=u_{x'},\quad
D_{x^1x'}u=u_{x^1x'},\quad
D_{xx'}u=u_{xx'}.
$$

Throughout the paper, we always assume that $1 < p,q< \infty$
unless explicitly specified otherwise.
By $N(d,p,\cdots)$ we mean that $N$ is a constant depending only
on the prescribed quantities $d, p,\cdots$.
For a (matrix-valued) function $f(t,x)$ in $\bR^{d+1}$, we set
\begin{equation*}
(f)_{\cD} = \frac{1}{|\cD|} \int_{\cD} f(t,x) \, dx \, dt
= \dashint_{\cD} f(t,x) \, dx \, dt,
\end{equation*}
where $\cD$ is an open subset in $\bR^{d+1}$ and $|\cD|$ is the
$d+1$-dimensional Lebesgue measure of $\cD$.
For $-\infty\leq S<T\leq \infty$ and $\Omega\subset\bR^d$, we set
\begin{equation*}
L_{q,p}((S,T)\times \Omega)=L_q((S,T),L_p(\Omega)),
\end{equation*}
i.e., $f(t,x)\in L_{q,p}((S,T)\times \Omega)$ if
\begin{equation*}
\|f\|_{L_{q,p}((S,T)\times \Omega)}=\left(\int_S^T\left(\int_{\Omega}
|f(t,x)|^p \,dx\right)^{q/p}\,dt\right)^{1/q}<\infty.
\end{equation*}
Denote
\begin{align*}
L_p((S,T)\times \Omega)&= L_{p,p}((S,T)\times \Omega),\\
W_{q,p}^{1,2}((S,T)\times \Omega)&=
\set{u:\,u,u_t,Du,D^2u\in L_{q,p}((S,T)\times \Omega)},\\
W_p^{1,2}((S,T)\times \Omega)&=
W_{p,p}^{1,2}((S,T)\times \Omega).
\end{align*}
We also use the abbreviations $L_p=L_p(\bR^{d+1})$, $W^2_p=W^2_p(\bR^{d+1})$, etc. For any $T\in (-\infty,\infty]$ and $\Omega\subset\bR^d$, we denote
\begin{equation*}
\bR_T=(-\infty,T), \quad \bR_T^{d+1}=\bR_T\times \bR^d.
\end{equation*}
Let
$$
B_r(x) = \{ y \in \bR^d: |x-y| < r\}, \quad
Q_r(t,x) = (t-r^2,t) \times B_r(x).
$$
Set $B_r = B_r(0)$, $Q_r=Q_r(0,0)$, and $|B_r|,|Q_r|$ to be the volume of $B_r,Q_r$ respectively.
Let $\cQ=\set{Q_r(t,x): (t,x) \in \bR^{d+1}, r \in (0, \infty)}$.
For a function $g$ defined on $\bR^{d+1}$,
we denote its (parabolic) maximal and sharp function, respectively, by
\begin{align*}
\bM g (t,x) &= \sup_{Q\in \cQ: (t,x) \in Q}
\dashint_{Q} | g(s,y) | \, dy \, ds,\\
g^{\#}(t,x) &= \sup_{Q\in \cQ:(t,x) \in Q}
\dashint_{Q} | g(s,y) - (g)_Q | \, dy \, ds.
\end{align*}


Next we state our assumptions on the coefficients precisely.
We assume that
all the coefficients are bounded and measurable, and $a^{jk}$ are uniformly elliptic, i.e.
$$
|b^j|\le  K,\quad |c|\le K,
$$
\begin{equation}
                        \label{ellipticity}
\delta|\xi|^{2}\le a^{jk}\xi^{j}\xi^{k}\leq\delta^{-1}|\xi|^{2}.
\end{equation}
Denote by $\mathcal A$ the set of $d\times d$ symmetric
matrix-valued measurable functions $\bar a = (\bar a^{ij}(y^1))$ of one spatial variable such that \eqref{ellipticity} holds with $\bar a$ in place of $a$.

Let $\Psi$ be the set of $C^{1,1}$ diffeomorphisms  $\psi: \bR^{d} \to \bR^{d}$ such that the mappings $\psi$  and $\phi=\psi^{-1}$ satisfy
\begin{equation}
                                    \label{eq10.54}
|D\psi| + |D^2\psi| \le\delta^{-1},\quad
|D\phi| + |D^2\phi| \le\delta^{-1}.
\end{equation}

\begin{assumption}[$\gamma$]                          \label{assump2}
There exists a positive constant $R_0\in(0,1]$ such that, for
any parabolic cylinder $Q$ of radius less than $R_0$,
one can find an $\bar a\in \mathcal A$ and a  $\psi = (\psi^1,\cdots,\psi^d)\in\Psi$ such that
\begin{equation}
                            \label{eq3.07}
\int_Q |a(t,x)-\bar a( \psi^1(x))|\, dx \,dt
\le\gamma|Q|.
\end{equation}
\end{assumption}

Next we state the main result of the article.

\begin{theorem}
                            \label{thm1}
For any
$p\in (1,\infty)$ there exists a $\gamma =\gamma(d,\delta,p) >0$
such that under Assumption
\ref{assump2} ($\gamma$) for any $T\in (-\infty,+\infty]$ the following holds.

i) For any $u\in W^{1,2}_p(\bR^{d+1}_T)$,
$$
\lambda\|u\|_{L_{p}(\bR^{d+1}_T)}+\sqrt{\lambda}
\|Du\|_{L_{p}(\bR^{d+1}_T)}+\|D^{2}u\|_{L_{p}(\bR^{d+1}_T)}
+\|u_t\|_{L_{p}(\bR^{d+1}_T)}
$$
\begin{equation*}
\leq N\|Pu-\lambda u\|_{L_{p}(\bR^{d+1}_T)},
\end{equation*}
provided that $\lambda\geq \lambda_0$, where $\lambda_0
\geq0$ and $N$ depend only
 on $d,\delta,p,K$, and $R_0$.

ii) For any $\lambda> \lambda_0$ and $f\in L_p(\bR^{d+1}_T)$, there exists a unique solution
$u\in W^{1,2}_p(\bR^{d+1}_T)$ of equation \eqref{parabolic} in
$\bR^{d+1}_T$.
\end{theorem}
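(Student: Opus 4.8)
We shall follow the strategy of \cite{Kr08}: the core is the a priori estimate of part i) for $T=+\infty$, from which part ii) follows by the method of continuity (with uniqueness immediate from i)), while the case $T<+\infty$ needs nothing extra since the parabolic cylinders $Q_r(t,x)=(t-r^2,t)\times B_r(x)$ are backward in time, so the sharp-function argument localizes inside $\bR^{d+1}_T$; existence for finite $T$ is then obtained from the case $T=+\infty$ by extending $f$ by zero and restricting the solution. Fix $p\in(1,\infty)$ and pick $\nu\in(1,p)$.

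By a partition of unity it suffices to prove the estimate of part i) for $u$ supported in a parabolic cylinder of a small radius $\rho_0$: the commutator terms produced by the cutoffs involve only $Du$ and $u$ and, after summation, are absorbed via $\|Du\|_{L_p}\le\varepsilon\|D^2u\|_{L_p}+N(\varepsilon)\|u\|_{L_p}$ and the largeness of $\lambda$; the same device reduces us to $b\equiv c\equiv0$. Write $f=Pu-\lambda u$, and let $U$ denote the portion of $(u_t,D^2u)$ governed by the interior H\"older estimate of Theorem \ref{thm3.5}; the complementary one-dimensional part of $D^2u$ is controlled through the equation and \eqref{ellipticity} so that $\|D^2u\|_{L_p}\le N(\|U\|_{L_p}+\lambda\|u\|_{L_p}+\|f\|_{L_p})$, and for this bookkeeping we follow \cite{KimKrylov07,Kr08}. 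We estimate the sharp function $U^{\#}$ pointwise. For a cylinder $Q_r\ni(t_0,x_0)$ with $r$ not small the support condition gives $U^{\#}(t_0,x_0)\le N\rho_0^{-(d+2)}\|U\|_{L_1}$, which after the Fefferman--Stein step contributes at most $N\abs{\operatorname{supp}u}^{1-1/p}\|U\|_{L_p}$, absorbed by taking $\rho_0$ small.

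For $Q_r=Q_r(s,y)\ni(t_0,x_0)$ with $\mu r\le R_0$ (the dilation $\mu\ge2$ to be chosen), use Assumption \ref{assump2}($\gamma$) on $Q_{\mu r}(s,y)$ to pick $\bar a\in\cA$ and $\psi\in\Psi$ with $\int_{Q_{\mu r}}\abs{a-\bar a(\psi^1)}\le\gamma\abs{Q_{\mu r}}$, perform the change of variables $z=\psi(x)$ (its distortion bounded by \eqref{eq10.54}), and split the solution as $v+w$, where $v$ solves an equation with leading coefficients depending only on $z^1$ (the errors from the coefficient approximation and from the straightening being $\gamma$-small and $O(\mu\rho_0)$-small respectively) and $w$ the corresponding homogeneous equation in $Q_{\mu r}$. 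Theorem \ref{thm2} applied to $v$ (valid for the exponent $\nu$, \emph{which may be taken arbitrarily close to $1$} -- this is precisely what removes the old restriction $p>2$) bounds $\|(v_t,D^2v)\|_{L_\nu}$ by the $L_\nu$-norm of the data; splitting the error terms with H\"older's inequality and using the uniform bound $\abs{a-\bar a(\psi^1)}\le2\delta^{-1}$ together with $\int_{Q_{\mu r}}\abs{a-\bar a(\psi^1)}\le\gamma\abs{Q_{\mu r}}$ produces, after transporting back, a contribution $N(\gamma+\mu\rho_0)^{\theta}(\bM\abs{D^2u}^{\nu'})^{1/\nu'}(t_0,x_0)+N(\bM\abs f^{\nu})^{1/\nu}(t_0,x_0)$ for some $\theta>0$, $\nu'\in(1,p)$. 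For $w$, Theorem \ref{thm3.5} gives a H\"older estimate of the corresponding portion of $D^2w$ on $Q_{\mu r/2}$, whence $\dashint_{Q_r}\abs{U_w-(U_w)_{Q_r}}\le N\mu^{-\alpha}(\bM\abs{U}^{\nu})^{1/\nu}(t_0,x_0)$. Choosing first $\mu$ large, then $\gamma=\gamma(d,\delta,p)$ small, and finally $\rho_0$ small, we obtain for any prescribed $\varepsilon>0$
\begin{equation*}
U^{\#}(t_0,x_0)\le\varepsilon(\bM\abs{D^2u}^{\nu})^{1/\nu}(t_0,x_0)+N(\varepsilon)(\bM\abs f^{\nu})^{1/\nu}(t_0,x_0).
\end{equation*}

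The generalized Fefferman--Stein theorem of \cite{Krylov_2007_mixed_VMO} ($\|U\|_{L_p}\le N\|U^{\#}\|_{L_p}$) and the Hardy--Littlewood maximal function theorem (valid since $\nu<p$), combined with $\|D^2u\|_{L_p}\le N(\|U\|_{L_p}+\lambda\|u\|_{L_p}+\|f\|_{L_p})$ and the $L_p$-bound $\lambda\|u\|_{L_p}+\sqrt\lambda\|Du\|_{L_p}\le N\|f\|_{L_p}$ obtained alongside in the standard way for $\lambda\ge\lambda_0$, then yield the full estimate once $\varepsilon$ is chosen small; undoing the partition of unity finishes part i). Part ii) follows by the method of continuity along $P_\tau=-\partial_t+\big((1-\tau)\delta^{ij}+\tau a^{ij}\big)D_{ij}+\tau(b^jD_j+c)$, $\tau\in[0,1]$: each $P_\tau$ is uniformly parabolic and its leading coefficients still satisfy Assumption \ref{assump2}($\gamma$) with the same $\gamma$ (take $\bar a_\tau=(1-\tau)I+\tau\bar a$), so part i) supplies a uniform bound, while at $\tau=0$ the equation $-u_t+\Delta u-\lambda u=f$ is $W^{1,2}_p$-solvable classically. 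The step I expect to be hardest is the localized sharp-function estimate of the third paragraph -- arranging the freezing-plus-straightening so that Theorem \ref{thm2} genuinely applies and tracking the dependence on $\mu$, $\gamma$ and $\rho_0$ so that the homogeneous ($w$) and inhomogeneous ($v$) contributions are made small in the correct order; the partition of unity, the interpolation absorptions, and the continuity argument are routine.
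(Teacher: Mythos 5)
Your overall architecture (frozen-coefficient solvability for all $p\in(1,\infty)$ as the new ingredient, H\"older estimate for the good portion of $D^2u$ of the homogeneous problem, localization with Assumption \ref{assump2}, Fefferman--Stein plus Hardy--Littlewood, partition of unity and method of continuity) matches the paper's, but there is a genuine gap at the very point that distinguishes the \emph{variably} partially VMO setting from \cite{KimKrylov07,KK2}: you ``let $U$ denote the portion of $(u_t,D^2u)$ governed by the interior H\"older estimate'' and then estimate the sharp function $U^{\#}$ pointwise. No such globally defined $U$ exists here. The measurable direction, i.e.\ the diffeomorphism $\psi$ produced by Assumption \ref{assump2}, changes from cylinder to cylinder, so the quantity that enjoys a small mean oscillation on a given cylinder is the cylinder-dependent family $u_{ij}=(D_{y^iy^j}v)(\psi(x))$, $ij>1$, as in \eqref{eq9.23}; a supremum over cylinders of mean oscillations of \emph{different} functions is not the sharp function of anything, and the classical inequality you quote, $\|U\|_{L_p}\le N\|U^{\#}\|_{L_p}$, is not what the generalized Fefferman--Stein theorem says. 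The paper resolves exactly this by Theorem \ref{generalStein}: one takes the globally defined $U=|u_t|+|D^2u|$ and $V=|u_t|+|D^2u|+|Du|+|f|$, and for each parabolic dyadic cube $C$ a cube-dependent majorant $U^C=|u_t|+\sum_{ij>1}|u_{ij}|+|Du|+|f|$ (with $\psi$ chosen for a cylinder containing $C$); the pointwise bound $|U|\le NU^C$ comes from the equation and ellipticity, see \eqref{eq11.22}, the oscillation of $U^C$ over $C$ is bounded by $(\sfg)_C$ as in Lemma \ref{lem4.3}, and the conclusion is $\|U\|_{L_p}^p\le N\|H\|_{L_p}\|V\|_{L_p}^{p-1}$, which is then absorbed as in Theorem \ref{thm4.4}. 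Your third paragraph, which estimates the oscillation of ``$U_w$'' on $Q_r$ through the straightened coordinates of the bigger cylinder, implicitly assumes the straightening is the same for all cylinders through the point; with that assumption you are back in the fixed-direction setting and the example in Section \ref{secMain} (the spiral interface) shows the hypothesis of Theorem \ref{thm1} does not reduce to it, even locally after a partition of unity.

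A second, related inaccuracy: the bookkeeping $\|D^2u\|_{L_p}\le N(\|U\|_{L_p}+\lambda\|u\|_{L_p}+\|f\|_{L_p})$ again presupposes a global good portion; in the paper the recovery of the full Hessian from the partial quantities is done cube-wise inside the generalized Fefferman--Stein framework through $U\le NU^C$, not after the $L_p$ estimate. The remaining ingredients of your proposal are sound and close to the paper: the frozen-coefficient result for all $p$ (Theorem \ref{thm2.2}, proved by rewriting in divergence form) is indeed what removes $p>2$; your local splitting $u=v+w$ is a legitimate variant of the paper's direct perturbation of $\hat P$ in Theorem \ref{thm4.2} (the paper uses such a splitting only in Section \ref{sec5}); and your continuity path with $\bar a_\tau=(1-\tau)I+\tau\bar a$ preserving Assumption \ref{assump2} is fine. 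But as written, the core sharp-function step does not go through and must be replaced by the cube-dependent argument based on Theorem \ref{generalStein}.
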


Theorem \ref{thm1} yields the following solvability result of the initial value problem of parabolic equations (see, for instance, \cite{Krylov_2005}).

\begin{theorem}
                            \label{thm1b}
For any
$p\in (1,\infty)$ there exists a $\gamma =\gamma(d,\delta,p) >0$
such that under Assumption
\ref{assump2} ($\gamma$) for any $T\in (0,\infty)$ the following holds. For any $f\in L_p((0,T)\times\bR^{d})$, there exists a unique solution $u\in W^{1,2}_p((0,T)\times \bR^{d})$ of
$$
Pu=f\,\,\,\text{in} \,\,(0,T)\times \bR^{d},\quad u(0,\cdot)=0.
$$
Moreover, we have
$$
\|u\|_{W^{1,2}_p((0,T)\times \bR^{d})}\le N
\|f\|_{L_p((0,T)\times \bR^{d})},
$$
where $N$ depend only on $d,\delta,p,K,T$ and $R_0$.
\end{theorem}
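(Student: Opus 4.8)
The plan is to deduce Theorem~\ref{thm1b} from Theorem~\ref{thm1} by extending the data by zero to negative times and removing the threshold $\lambda_0$ with an exponential substitution. Write $\lambda_0 = \lambda_0(d,\delta,p,K,R_0)$ for the constant of Theorem~\ref{thm1} and set $\mu := \lambda_0 + 1$. First I would record that $u \mapsto v := e^{-\mu t} u$ is a bijection between solutions of $Pu = f$ in $(0,T)\times\bR^d$ with $u(0,\cdot) = 0$ and solutions of $Pv - \mu v = g$ in $(0,T)\times\bR^d$ with $v(0,\cdot) = 0$, where $g := e^{-\mu t} f$; this is immediate from the identity $Pu = e^{\mu t}(Pv - \mu v)$. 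Since $0 < e^{-\mu t} \le 1$ and $e^{\mu t} \le e^{\mu T}$ on $[0,T]$, the $L_p$ norms of $f$ and $g$, and the $W^{1,2}_p$ norms of $u$ and $v$, are comparable with constants depending only on $\mu$ and $T$. Hence it suffices to solve $Pv - \mu v = g$, $v(0,\cdot) = 0$, together with the bound $\|v\|_{W^{1,2}_p((0,T)\times\bR^d)} \le N\|g\|_{L_p((0,T)\times\bR^d)}$.

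For existence I would extend $g$ by $0$ to $\bR^{d+1}_T$, obtaining $\tilde g \in L_p(\bR^{d+1}_T)$, and apply Theorem~\ref{thm1}~(ii) with this $T$ and $\lambda = \mu > \lambda_0$ to get $\tilde v \in W^{1,2}_p(\bR^{d+1}_T)$ with $P\tilde v - \mu \tilde v = \tilde g$. On $(-\infty,0)\times\bR^d$ the right-hand side vanishes, so by the uniqueness assertion of Theorem~\ref{thm1} applied with $T$ replaced by $0$ we obtain $\tilde v \equiv 0$ there; since $W^{1,2}_p$ functions restricted to a finite time strip are continuous in $t$ with values in a suitable interpolation space, this forces the trace $\tilde v(0,\cdot) = 0$. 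Then $v := \tilde v|_{(0,T)\times\bR^d}$ is the desired solution, and Theorem~\ref{thm1}~(i) gives
\[
\|v\|_{W^{1,2}_p((0,T)\times\bR^d)} \le \|\tilde v\|_{W^{1,2}_p(\bR^{d+1}_T)} \le N\|\tilde g\|_{L_p(\bR^{d+1}_T)} = N\|g\|_{L_p((0,T)\times\bR^d)},
\]
with $N = N(d,\delta,p,K,R_0)$ (the value of $\mu$ being determined by $\lambda_0$).

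For uniqueness I would take the difference $w := v_1 - v_2$ of two $W^{1,2}_p$ solutions, so $Pw - \mu w = 0$ in $(0,T)\times\bR^d$ and $w(0,\cdot) = 0$, and extend $w$ by $0$ for $t \le 0$ to a function $\tilde w$. Because $w$ and $Dw$ have vanishing initial trace, $\tilde w \in W^{1,2}_p(\bR^{d+1}_T)$ and $P\tilde w - \mu \tilde w = 0$ in $\bR^{d+1}_T$, no distributional mass being concentrated on $\{t = 0\}$; Theorem~\ref{thm1}~(ii) then gives $\tilde w \equiv 0$, i.e.\ $v_1 = v_2$. Translating back, $u := e^{\mu t} v$ is the unique solution of the original problem, and the estimate becomes $\|u\|_{W^{1,2}_p((0,T)\times\bR^d)} \le N\|f\|_{L_p((0,T)\times\bR^d)}$ with $N = N(d,\delta,p,K,T,R_0)$, the extra dependence on $T$ entering through the factor $e^{\mu T}$.

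The one point that is not completely routine is the pair of facts about parabolic Sobolev spaces invoked above: that elements of $W^{1,2}_p((0,T)\times\bR^d)$ have well-defined initial traces, and that those with zero initial trace extend by zero across $t = 0$ to elements of $W^{1,2}_p(\bR^{d+1}_T)$ solving the same equation. I expect this to be the main technical obstacle, but it is classical; in the final write-up I would simply cite it, e.g.\ from \cite{Krylov_2005}, rather than reproving it.
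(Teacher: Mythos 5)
Your argument is correct and is precisely the standard deduction of the initial value problem from Theorem \ref{thm1} (exponential substitution $v=e^{-\mu t}u$ to get above $\lambda_0$, zero extension of the data for existence, zero extension of the difference of solutions for uniqueness, with the classical trace and zero-extension facts for $W^{1,2}_p$ cited rather than reproved), which is exactly the route the paper takes by referring to \cite{Krylov_2005} instead of writing out a proof. The only cosmetic remark is that in the uniqueness step you do not need any trace of $Dw$: the vanishing trace of $w$ alone guarantees that the zero extension creates no distributional mass on $\{t=0\}$, since spatial derivatives commute with the extension.
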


In the case that all the coefficients are time-independent, we also consider the $W^{2}_{p}$-solvability of elliptic equations in nondivergence
form:
\begin{equation}
                                                \label{elliptic}
L u-\lambda u=f,
\end{equation}
where
$$ L u=a^{jk}D_{jk}u+b^{j}D_ju+cu.
$$

\begin{assumption}[$\gamma$]                          \label{assump3}
There exists a positive constant $R_0\in(0,1]$ such that for
any ball $B$ of radius less than $R_0$ one can find an $\bar a\in \mathcal A$ and a  $\psi = (\psi_1,\cdots,\psi_d)\in \Psi$ such that
\begin{equation*}
\int_B |a(x)-\bar a( \psi^1(x))|\, dx
\le\gamma|B|.
\end{equation*}
\end{assumption}

The following $W^2_p$-solvability theorem for elliptic equations is an immediate corollary of Theorem \ref{thm1}, which generalizes Theorem 1.4 of \cite{Kr08} by dropping the condition $p>2$.

\begin{theorem}
                            \label{thm2}
For any
$p\in (1,\infty)$ there exists a $\gamma =\gamma(d,\delta,p) >0$
such that under Assumption
\ref{assump3} ($\gamma$) the following holds.

i) For any $u\in W^{2}_p(\bR^{d})$,
\begin{equation*}
\lambda\|u\|_{L_{p}(\bR^{d})}+\sqrt{\lambda}
\|Du\|_{L_{p}(\bR^{d})}+\|D^{2}u\|_{L_{p}(\bR^{d})}
\leq N\|Lu-\lambda u\|_{L_{p}(\bR^{d})},
\end{equation*}
provided that $\lambda\geq \lambda_0$, where $\lambda_0
\geq0$ and $N$ depend only
 on $d,\delta,p,K$, and $R_0$.

ii) For any $\lambda> \lambda_0$ and $f\in L_p(\bR^{d})$, there exists a unique solution
$u\in W^{2}_p(\bR^{d})$ of equation \eqref{elliptic} in
$\bR^{d}$.
\end{theorem}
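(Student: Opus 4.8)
The plan is to obtain Theorem \ref{thm2} from the parabolic Theorem \ref{thm1} by the classical device of adjoining a dummy time variable (cf.\ \cite{Krylov_2005}). The first thing to check is that the hypotheses transfer. Regard the time-independent coefficients $a^{jk}(x)$ of $L$ as coefficients on $\bR^{d+1}$, and let $P$ be the associated parabolic operator $Pv=-v_t+a^{jk}D_{jk}v+b^jD_jv+cv$. If $a$ satisfies Assumption \ref{assump3}($\gamma$), then for any parabolic cylinder $Q=Q_r(t_0,x_0)$ with $r<R_0$, using that $a$ does not depend on $t$, that $|Q_r|=r^2|B_r|$, and the $\bar a\in\mathcal A$, $\psi\in\Psi$ furnished by Assumption \ref{assump3} for the ball $B_r(x_0)$, we get
\begin{equation*}
\int_Q |a(x)-\bar a(\psi^1(x))|\,dx\,dt = r^2\int_{B_r(x_0)}|a(x)-\bar a(\psi^1(x))|\,dx \le \gamma\,r^2|B_r(x_0)| = \gamma|Q|,
\end{equation*}
so $a$ satisfies Assumption \ref{assump2}($\gamma$) with the same $\gamma$. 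Hence the value $\gamma(d,\delta,p)$ produced by Theorem \ref{thm1} will serve in Theorem \ref{thm2}.

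For part (i), fix a nonzero $\zeta\in C_0^\infty(\bR)$ and, given $u\in W^2_p(\bR^d)$, set $v(t,x)=\zeta(t)u(x)\in W^{1,2}_p(\bR^{d+1})$. A direct computation gives $Pv-\lambda v=-\zeta'(t)u(x)+\zeta(t)(Lu-\lambda u)(x)$, and every norm in Theorem \ref{thm1}(i) factors, e.g.\ $\|D^2v\|_{L_p(\bR^{d+1})}=\|\zeta\|_{L_p(\bR)}\|D^2u\|_{L_p(\bR^d)}$. Applying Theorem \ref{thm1}(i) to $v$, discarding the nonnegative term $\|v_t\|_{L_p}$ on the left, and dividing by $\|\zeta\|_{L_p(\bR)}$, we obtain
\begin{equation*}
\lambda\|u\|_{L_p(\bR^d)}+\sqrt\lambda\|Du\|_{L_p(\bR^d)}+\|D^2u\|_{L_p(\bR^d)} \le N\|Lu-\lambda u\|_{L_p(\bR^d)} + N\,\frac{\|\zeta'\|_{L_p(\bR)}}{\|\zeta\|_{L_p(\bR)}}\,\|u\|_{L_p(\bR^d)},
\end{equation*}
with $N,\lambda_0$ as in Theorem \ref{thm1}. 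Replacing $\zeta(t)$ by $\zeta(t/R)$ multiplies the ratio $\|\zeta'\|_{L_p}/\|\zeta\|_{L_p}$ by $R^{-1}$; choosing $R$ large (depending only on $d,\delta,p,K,R_0$ and the fixed $\zeta$) so that $N\|\zeta'\|_{L_p}/\|\zeta\|_{L_p}\le 1$, and then enlarging $\lambda_0$ (say to $\lambda_0+2$), the last term is absorbed into the left-hand side, which gives the asserted estimate.

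For part (ii), uniqueness is immediate from part (i). For existence I would run the method of continuity along $L_s:=sL+(1-s)\Delta$, $s\in[0,1]$: each $L_s$ has bounded coefficients, is uniformly elliptic with the same $\delta$, and its leading part satisfies Assumption \ref{assump3}($\gamma$) (approximating $sa+(1-s)I$ by $s\bar a(\psi^1)+(1-s)I$ keeps the oscillation $\le s\gamma|B|\le\gamma|B|$), so part (i) yields $\|u\|_{W^2_p(\bR^d)}\le N\|L_su-\lambda u\|_{L_p(\bR^d)}$ uniformly in $s$ once $\lambda\ge\lambda_0$; since $\Delta-\lambda$ is an isomorphism $W^2_p(\bR^d)\to L_p(\bR^d)$ for $\lambda>0$ by constant-coefficient theory, the method of continuity gives the same for $L-\lambda=L_1-\lambda$. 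There is no genuine obstacle here, this being the standard elliptic-from-parabolic reduction; the only points needing a little care are the uniform absorption of the $\zeta'$-term (handled by the scaling $\zeta(\cdot)\mapsto\zeta(\cdot/R)$) and checking that Assumption \ref{assump3}($\gamma$) survives both the passage to $P$ and the homotopy $L_s$.
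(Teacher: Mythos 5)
Your proposal is correct and follows essentially the same route as the paper, which proves Theorem \ref{thm2} by the standard reduction of elliptic equations to steady-state parabolic ones via Theorem \ref{thm1} (the paper simply refers to the proof of Theorem 2.6 in \cite{Krylov_2005} for the details you spell out). Your verification that Assumption \ref{assump3} implies Assumption \ref{assump2}, the cutoff-in-time argument with rescaling of $\zeta$ to absorb the $\zeta'$-term, and the method-of-continuity existence argument are all sound.
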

\begin{proof}
The theorem follows from Theorem \ref{thm1} using the idea that solutions to elliptic equations can be viewed as steady state solutions to parabolic equations. We omit the details and refer the reader to the proof of Theorem 2.6 \cite{Krylov_2005}.
\end{proof}

The theorems above generalizes several previously known results of nondivergence form equations with discontinuous coefficients to a large extent. In particular, one can get the solvability of equations to which the results in \cite{KimKrylov07} and \cite{KK2} are not applicable. We refer the reader to an interesting example given in the end of the Introduction of \cite{Kr08}. Here we give another example when $d=2$.

Consider in the polar coordinates $(\rho,\theta)$ the union of two graphs $\rho=e^{-\theta/\epsilon}$ and $\rho=e^{-(\theta+\pi)/\epsilon}$ for some $\epsilon>0$.
This curve divides the plane into two connected components. It becomes
flat if $\epsilon$ is small. Let $a^{ij}$ be different constants
in these two components.
If $\epsilon=\epsilon(\gamma)$ is sufficiently small, it is easy to check that $a^{ij}$
satisfy Assumption \ref{assump3} ($\gamma$). But there does not exist a common diffeomorphism $\psi$ which works for all small balls centered at the origin. Therefore, the result in \cite{KimKrylov07}  is not applicable in this case even if one uses a partition of the unity.

\begin{remark}
In \cite{DongKrylov}, the $W^2_p$-solvability is obtained for equations with leading coefficients measurable in two spatial variables and VMO in the others, when $p>2$ is sufficiently close to $2$. An interesting problem is whether that result can be extended to equations with variably partially VMO coefficients.
\end{remark}

\mysection{Preliminaries}
					\label{sec3}

First we recall the following embedding-type result (see, for example, \cite{Krylov_2007_mixed_VMO} or \cite{LSU}).
\begin{lemma}
                                                    \label{lem3.4}
Let $q\ge 1$ and
\begin{equation*}
\frac 1 q<\frac 1 {d+2}+\frac 1 p.
\end{equation*}
Then there is a constant $N=N(d,p,q,r,R)$ such that for any $u\in W_{q,\text{loc}}^{1,2}$ and $0<r<R<\infty$ we have
\begin{equation*}
\|u\|_{L_p(Q_r)}+\|Du\|_{L_p(Q_r)}\le N\|u\|_{W^{1,2}_q(Q_{R})}.
\end{equation*}
\end{lemma}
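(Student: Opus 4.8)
The plan is to localize by a cutoff, represent the localized function through the Gaussian heat potential, and then apply a Hardy--Littlewood--Sobolev inequality adapted to the parabolic scaling on $\bR^{d+1}$. (Alternatively one may quote the embedding theorems for anisotropic Sobolev spaces from \cite{LSU}, Chapter~II, which is essentially the cited route; here I describe a self-contained argument.)

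First I would reduce to a compactly supported function. Extending $u$ by even reflection in $t$ across $t=0$ gives a $W^{1,2}_q((-R^2,R^2)\times B_R)$ function with norm $\le N(d,q,R)\|u\|_{W^{1,2}_q(Q_R)}$; fix $r<\rho<R$, choose $\zeta\in C_0^\infty(\bR^{d+1})$ with $\zeta\equiv 1$ on $Q_r$ and $\operatorname{supp}\zeta\subset(-\rho^2,\rho^2)\times B_\rho$, and set $v=u\zeta$, so that $v\in W^{1,2}_q(\bR^{d+1})$ is compactly supported, $v=u$ on $Q_r$, and, by the Leibniz rule,
\[
g:=v_t-\Delta v=(u_t-\Delta u)\zeta-2Du\cdot D\zeta-u\,\Delta\zeta+u\,\zeta_t\in L_q(\bR^{d+1}),
\]
supported in $\operatorname{supp}\zeta$ with $\|g\|_{L_q}\le N(d,q,r,R)\|u\|_{W^{1,2}_q(Q_R)}$. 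Since $v$ is compactly supported and satisfies $(\partial_t-\Delta)v=g$, while $\Gamma*g$ (where $\Gamma$ is the Gaussian heat kernel) is the unique solution of $(\partial_t-\Delta)w=g$ vanishing for $t$ sufficiently negative, uniqueness for the Cauchy problem gives $v=\Gamma*g$, and hence $Dv=(D_x\Gamma)*g$.

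Next I would estimate these convolutions. From $|\Gamma(t,x)|\le N(\sqrt t+|x|)^{-d}$ and $|D_x\Gamma(t,x)|\le N(\sqrt t+|x|)^{-d-1}$ for $t>0$, together with $|Q_r|\sim r^{d+2}$, one checks that $\Gamma\in L_{\frac{d+2}{d},\infty}(\bR^{d+1})$ and $D_x\Gamma\in L_{\frac{d+2}{d+1},\infty}(\bR^{d+1})$; the weak-type Young (Hardy--Littlewood--Sobolev) inequality then yields $\Gamma*g\in L_{p_1}$ and $(D_x\Gamma)*g\in L_{p_2}$ with $\tfrac1{p_1}=(\tfrac1q-\tfrac2{d+2})_+$ and $\tfrac1{p_2}=\tfrac1q-\tfrac1{d+2}$ (with the obvious endpoint modifications when $q=1$ or $\tfrac1q\le\tfrac2{d+2}$, using also that $g$ is compactly supported). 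The hypothesis $\tfrac1q<\tfrac1{d+2}+\tfrac1p$ reads $\tfrac1p>\tfrac1{p_2}\ge\tfrac1{p_1}$, so, since $v$ and $Dv$ are supported in the fixed bounded set $\operatorname{supp}\zeta$, Hölder's inequality there gives
\[
\|u\|_{L_p(Q_r)}+\|Du\|_{L_p(Q_r)}\le\|v\|_{L_p(\bR^{d+1})}+\|Dv\|_{L_p(\bR^{d+1})}\le N\|g\|_{L_q}\le N\|u\|_{W^{1,2}_q(Q_R)},
\]
which is the claim.

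The main obstacle is the parabolic Hardy--Littlewood--Sobolev step, especially the bound on $(D_x\Gamma)*g$: convolution against a kernel of parabolic homogeneity $-(d+1)$ should map $L_q$ into $L_{p_2}$, gaining one parabolic derivative. This is classical (and one may instead simply invoke the anisotropic embedding theorem of \cite{LSU}), but it is the only point requiring genuine work rather than bookkeeping; it is also precisely where the \emph{strict} inequality $\tfrac1q<\tfrac1{d+2}+\tfrac1p$ enters, providing the room to pass from the weak-type endpoint and from bounded domains down to honest $L_p$ control and to accommodate $q=1$.
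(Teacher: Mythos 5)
Your argument is correct, but note that the paper does not prove this lemma at all: it is recalled as a known embedding-type result with a citation to \cite{Krylov_2007_mixed_VMO} and \cite{LSU} (the anisotropic embedding theorems there are exactly the "alternative" you mention). Your self-contained route --- even reflection in $t$ (legitimate, since even reflection preserves $W^{1}$ in the time variable and merely reflects $Du$, $D^2u$), multiplication by a cutoff, the identity $v=\Gamma*g$, and then the parabolic weak-Young/HLS inequality for kernels bounded by $(\sqrt t+|x|)^{-d}$ and $(\sqrt t+|x|)^{-d-1}$ (weak $L_{(d+2)/d}$ and $L_{(d+2)/(d+1)}$ with respect to the homogeneous dimension $d+2$) --- is sound, and the exponent bookkeeping $\tfrac1{p_2}=\tfrac1q-\tfrac1{d+2}$, $\tfrac1{p_1}=\tfrac1q-\tfrac2{d+2}$ together with H\"older on the bounded support correctly uses the strict inequality $\tfrac1q<\tfrac1{d+2}+\tfrac1p$, including the $q=1$ and large-$q$ endpoints. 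Two small points you should make precise if you write this out: (i) "uniqueness for the Cauchy problem" needs a uniqueness class (Tychonoff), but here both $v$ and $\Gamma*g$ are tempered distributions vanishing for $t$ very negative, so a Fourier-in-$x$ argument settles it; (ii) a reflection is not strictly necessary, since the heat potential only integrates over $s<t$, so a cutoff in $\{t>-\rho^2,\ |x|<\rho\}$ that equals $1$ up to $t=0$ already suffices on $\bR^{d+1}_0$. What your proof buys is independence from the quoted references; what the paper's citation buys is brevity, since the result is classical.
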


Let
$$
P_0u=-u_t+a^{ij}D_{ij}u,
$$
where $a^{ij}=a^{ij}(x^1)$. Our proof relies on the following solvability theorem.
\begin{theorem}
						\label{thm2.2}
Let $p\in (1,\infty)$ and $T\in (-\infty,\infty]$. Then for any $u\in W^{1,2}_p(\bR^{d+1}_T)$ and $\lambda\ge 0$, we have
$$
\lambda\|u\|_{L_{p}(\bR^{d+1}_T)}+\sqrt{\lambda}
\|Du\|_{L_{p}(\bR^{d+1}_T)}+\|D^2u\|_{L_{p}(\bR^{d+1}_T)}
+\|u_{t}\|_{L_{p}(\bR^{d+1}_T)}
$$
\begin{equation}
                \label{eq10.30}
\le N\|P_0u-\lambda u\|_{L_{p}(\bR^{d+1}_T)},
\end{equation}
where $N=N(d,p,\delta)>0$.
Moreover, for any $f\in L_p(\bR^{d+1}_T)$ and $\lambda>0$ there is a unique $u\in W^{1,2}_p(\bR^{d+1}_T)$ solving $P_0 u-\lambda  u=f$ in $\bR^{d+1}_T$.
\end{theorem}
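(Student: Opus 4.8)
The plan is to deduce both \eqref{eq10.30} and the solvability assertion from the $\cH^1_p$-theory of \emph{divergence form} parabolic equations with partially VMO coefficients in \cite{Dong08}, the key observation being that when $a^{ij}=a^{ij}(x^1)$ a single Lipschitz change of the $x^1$-variable turns $P_0$ into a divergence form operator whose coefficients depend on $y^1$ only. Since $a$ is fixed here, set $\beta(x^1)=\int_0^{x^1}(a^{11}(s))^{-1}\,ds$; as $\delta\le a^{11}\le\delta^{-1}$, the map $(t,x)\mapsto(t,y)$, $y=(\beta(x^1),x')$, is bi-Lipschitz on $\bR^{d+1}_T$, and writing $\tilde u(t,y)=u(t,x)$ one checks — the one point being that, under the chain rule, $D_{y^1}(1/a^{11})=a^{11}D_{x^1}(1/a^{11})$, which is exactly what makes the spurious first order term drop into the divergence — the identity
\[
a^{11}(x^1)D_{11}u=D_{y^1}\big((1/a^{11})\,D_{y^1}\tilde u\big),
\]
and, similarly, $2a^{1k}D_{1k}u$ and $a^{kl}D_{kl}u$ $(k,l\ge2)$ become $D_{y^k}$-divergences of expressions of the form $(\text{function of }y^1)\,D_{y^j}\tilde u$. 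Hence $P_0u-\lambda u=f$ is equivalent to $-\tilde u_t+D_i(\hat a^{ij}(y^1)D_j\tilde u)-\lambda\tilde u=\tilde f$ with $\hat a^{ij}$ bounded, depending on $y^1$ only (so trivially VMO in $(t,y')$), and (possibly non-symmetric but) uniformly elliptic — its symmetric part is a nondegenerate congruence transform of $a$ — i.e.\ an operator of the type covered by \cite{Dong08}. Since a bi-Lipschitz change of variables preserves $L_p$-norms, \cite{Dong08} already gives $\lambda\|u\|_{L_p}+\sqrt\lambda\|Du\|_{L_p}\le N\|f\|_{L_p}$ and a unique solution with $u,Du\in L_p$.

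It remains — and this is the only real obstacle — to upgrade to $W^{1,2}_p$; this is not automatic, because $\beta$ is merely Lipschitz, so the change of variables does not respect $W^{1,2}_p$. Since $\hat a^{ij}$ does not depend on $y^k$ for $k\ge2$, differentiating the transformed equation in $y^k$ shows that $D_{y^k}\tilde u$ solves the same divergence form equation with right-hand side $\Div(\tilde f\,e_k)$; \cite{Dong08} then gives $\|D(D_{y^k}\tilde u)\|_{L_p}\le N\|f\|_{L_p}$, which, transported back, bounds every second derivative of $u$ except $D_{11}u$, i.e.\ $\|D_{ij}u\|_{L_p}\le N\|f\|_{L_p}$ whenever $i\ge2$ or $j\ge2$. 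Next rewrite the equation as $-u_t+a^{11}(x^1)D_{11}u-\lambda u=F$ with $F:=f-2\sum_{k\ge2}a^{1k}D_{1k}u-\sum_{k,l\ge2}a^{kl}D_{kl}u$, so that $\|F\|_{L_p}\le N\|f\|_{L_p}$ by the previous step, and differentiate this identity once in $x^1$: with $w:=D_1u$ one gets, distributionally, $-w_t+D_1(a^{11}(x^1)D_1w)-\lambda w=D_1F$. Adding $\Delta_{x'}w=D_1\big(\sum_{k\ge2}D_{kk}u\big)$ to both sides makes the leading matrix $\mathrm{diag}(a^{11}(x^1),1,\dots,1)$ uniformly elliptic — still measurable in $x^1$ and constant in the other variables — and the right-hand side a divergence of an $L_p$-function bounded by $N\|f\|_{L_p}$; one more application of \cite{Dong08} gives $\|D_{11}u\|_{L_p}=\|D_1w\|_{L_p}\le N\|f\|_{L_p}$, and then $u_t=a^{11}D_{11}u-\lambda u-F$ yields $\|u_t\|_{L_p}\le N\|f\|_{L_p}$. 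Collecting everything proves \eqref{eq10.30}.

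For existence one can use the method of continuity: \eqref{eq10.30} holds with the same constant $N$ for the whole family $(1-\theta)(-\partial_t+\Delta)+\theta P_0$, $\theta\in[0,1]$, joining $P_0$ to the heat operator, for which solvability in $W^{1,2}_p$ is classical; uniqueness then follows at once from \eqref{eq10.30}. The cases of small $\lambda\ge0$ and of finite $T$ reduce to large $\lambda$ and to $T=+\infty$ by the standard arguments, cf.\ \cite{Krylov_2005}; in all of this the $\lambda$-weights come out correctly straight from the divergence form estimates of \cite{Dong08}. The single delicate ingredient is the recovery of $D_{11}u$ and $u_t$ above: the divergence form theory does not see these directly, and the device is to differentiate the equation once more in $x^1$ and to restore ellipticity in the tangential directions before invoking \cite{Dong08} again.
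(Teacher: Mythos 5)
Your proposal is correct and follows essentially the same route as the paper: the Lipschitz change of variable $y^1=\int_0^{x^1}(a^{11})^{-1}ds$ turning $P_0$ into a divergence form operator with coefficients depending only on $y^1$, the estimates of \cite{Dong08} for $u$, $Du$ and the tangential second derivatives via differentiation in $y^k$, and the recovery of $D_{11}u$ by differentiating in $x^1$ and adding $\Delta_{x'}w$ to restore ellipticity before invoking \cite{Dong08} once more, with $u_t$ read off from the equation and the method of continuity giving solvability. The only cosmetic difference is your treatment of general $T$ (standard reduction cited rather than the explicit cut-off solution the paper uses), which is immaterial.
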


\begin{proof}
First we assume $T=\infty$. By the method of continuity, it suffices to prove the a priori estimate \eqref{eq10.30} for $u\in C_0^\infty$. Let
\begin{equation}
 				\label{eq2.32}
f=P_0 u-\lambda u.
\end{equation}
The idea is to use the solvability of the corresponding divergence form operator. We make a change of variables:
$$
y^1=\varphi(x^1):=\int_0^{x^1} \frac 1 {a^{11}(s)}\,ds,\quad y^j=x^j,\,\,j\ge 2.
$$
It is easy to see that $\varphi$ is a bi-Lipschitz function and
\begin{equation*}
\delta \le y^1/x^1\le \delta^{-1},\quad D_{y^1}=a^{11}(x^1)D_{x^1}.
\end{equation*}
Denote
$$
v(t,y^1,y')=u(t,\varphi^{-1}(y^1),y'),\quad
\tilde a^{ij}(y^1)=a^{ij}(\varphi^{-1}(y^1)),
$$
$$
\tilde f(t,y)=f(t,\varphi^{-1}(y^1),y').
$$
Define a divergence form operator $\tilde P_0$ by
$$
\tilde P_0 v=-v_t+D_{1}\left(\frac 1 {\tilde a^{11}} D_1 v\right)+\sum_{j=2}^d D_{j}\left(\frac {\tilde a^{1j}+\tilde a^{j1}}
{\tilde a^{11}} D_1v
\right)+\sum_{i,j=2}^d D_{j}(\tilde a^{ij}D_iv).
$$
Clearly, $v$ satisfies in $\bR^{d+1}$
$$
\tilde P_0 v-\lambda v=\tilde f.
$$
By Corollary 5.5 of \cite{Dong08}, we have
\begin{equation*}
\lambda \| v \|_{L_p}
+ \sqrt\lambda\|Dv\|_{L_p}
\le N\|\tilde f\|_{L_p}.
\end{equation*}
Therefore,
\begin{equation}
					\label{eq2.25}
\lambda \|u \|_{L_p}
+ \sqrt\lambda\|Du\|_{L_p}
\le N\|f\|_{L_p}.
\end{equation}
Next we estimate $D^2 u$. Notice that for each $k=2,...,d$ $D_kv$ satisfies
$$
\tilde P_0 (D_k v)-\lambda D_k v=D_k \tilde f.
$$
Again by using Corollary 5.5 of \cite{Dong08}, we get
$$
\| D_{yy^k} v \|_{L_p}
\le N\|\tilde f\|_{L_p},
$$
which implies
\begin{equation}
				\label{eq2.57}
\| D_{xx'} u \|_{L_p}
\le N\|f\|_{L_p}.
\end{equation}
Finally, to estimate $D_1^2 u$, we return to the equation in the original coordinates. From \eqref{eq2.32}, we see that $w:=D_1 u$ satisfies
\begin{equation*}
-w_t+D_1(a^{11}D_1w)+\Delta_{d-1}w-\lambda w=D_1 f+\sum_{ij>1}D_1
\left((\delta_{ij}-a^{ij})D_{ij}u\right).
\end{equation*}
We use Corollary 5.5 of \cite{Dong08} again to get
\begin{equation}
				\label{eq2.36}
\| D_{1}^2 u \|_{L_p}\le \| Dw \|_{L_p}\le N\| f \|_{L_p}
+N\sum_{ij>1} \| D_{ij}u \|_{L_p}.
\end{equation}
Combining \eqref{eq2.25}, \eqref{eq2.57} and \eqref{eq2.36} yields \eqref{eq10.30} by bearing in mind that
\begin{equation*}
u_t=a^{ij}D_{ij}u-\lambda u-f.
 \end{equation*}

For general $T\in (-\infty,\infty]$, we use the fact that $u=w$ for $t<T$, where $w\in
W_p^{1,2}$ solves
$$
P_0 w-\lambda w=\chi_{t<T}(P_0 u-\lambda u).
$$ The theorem is proved.
\end{proof}

As a direct consequence of Theorem \ref{thm2.2}, we have:

\begin{corollary}
						\label{cor2.3}
Let $p\in (1,\infty)$ and $T\in (0,\infty)$. Then for any $f\in L_p((0,T)\times\bR^{d})$ and $\lambda>0$ there is a unique $u\in W^{1,2}_p((0,T)\times\bR^{d})$ solving $P_0 u-\lambda  u=f$ in $(0,T)\times\bR^{d}$ and $u(0,\cdot)=0$. Moreover, we have
$$
\lambda\|u\|_{L_{p}((0,T)\times\bR^{d})}+\sqrt{\lambda}
\|Du\|_{L_{p}((0,T)\times\bR^{d})}+\|D^2u\|_{L_{p}
((0,T)\times\bR^{d})}
+\|u_{t}\|_{L_{p}((0,T)\times\bR^{d})}
$$
\begin{equation*}
\le N\|P_0u-\lambda u\|_{L_{p}((0,T)\times\bR^{d})},
\end{equation*}
where $N=N(d,p,\delta,T)>0$.
\end{corollary}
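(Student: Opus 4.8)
The plan is to obtain Corollary \ref{cor2.3} from Theorem \ref{thm2.2} by extending the data by zero across the initial time slice $t=0$; this is the standard device for passing from a whole-space estimate to an initial value problem.

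\emph{Existence and estimate.} Given $f\in L_p((0,T)\times\bR^d)$, let $\bar f$ denote the extension of $f$ to $\bR^{d+1}_T$ with $\bar f=0$ for $t\le 0$, so that $\|\bar f\|_{L_p(\bR^{d+1}_T)}=\|f\|_{L_p((0,T)\times\bR^d)}$. By the solvability part of Theorem \ref{thm2.2}, for $\lambda>0$ there is a unique $w\in W^{1,2}_p(\bR^{d+1}_T)$ with $P_0w-\lambda w=\bar f$ in $\bR^{d+1}_T$, and \eqref{eq10.30} bounds $\lambda\|w\|_{L_p}+\sqrt\lambda\|Dw\|_{L_p}+\|D^2w\|_{L_p}+\|w_t\|_{L_p}$ (all norms over $\bR^{d+1}_T$) by $N\|\bar f\|_{L_p(\bR^{d+1}_T)}$. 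Since $\bar f$ vanishes on $\bR^{d+1}_0=(-\infty,0)\times\bR^d$, the restriction of $w$ to $\bR^{d+1}_0$ lies in $W^{1,2}_p(\bR^{d+1}_0)$ and solves $P_0w-\lambda w=0$ there, so the uniqueness assertion of Theorem \ref{thm2.2} (applied with $T$ replaced by $0$) gives $w\equiv 0$ on $(-\infty,0)\times\bR^d$. Functions in $W^{1,2}_p$ over a time interval have a trace that is continuous in $t$, whence $w(0,\cdot)=0$; setting $u:=w$ on $(0,T)\times\bR^d$ therefore yields a solution of $P_0u-\lambda u=f$ with $u(0,\cdot)=0$, and since $w=0$ for $t\le 0$ the norms of $w$ over $\bR^{d+1}_T$ coincide with those of $u$ over $(0,T)\times\bR^d$, giving the stated estimate.

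\emph{Uniqueness.} If $u_1,u_2\in W^{1,2}_p((0,T)\times\bR^d)$ both solve the problem, then $u:=u_1-u_2$ satisfies $P_0u-\lambda u=0$ in $(0,T)\times\bR^d$ with $u(0,\cdot)=0$. Extend $u$ by zero to $\bR^{d+1}_T$, calling the result $\tilde u$. Because the trace of $u$ at $t=0$ vanishes, no singular term appears in $\partial_t\tilde u$ at $t=0$, so $\tilde u\in W^{1,2}_p(\bR^{d+1}_T)$ and $P_0\tilde u-\lambda\tilde u=0$ throughout $\bR^{d+1}_T$; the uniqueness part of Theorem \ref{thm2.2} then forces $\tilde u\equiv 0$, hence $u_1=u_2$.

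The only delicate points — and the main, though mild, obstacle — are the trace facts invoked above: that a $W^{1,2}_p$ function on a time slab has a well-defined trace continuous in time, and that matching this trace to zero is precisely the condition under which the zero extension is again in $W^{1,2}_p$ and solves the extended equation. These are standard properties of parabolic Sobolev spaces (see, e.g., \cite{LSU}, \cite{Krylov_2005}), and with them the corollary follows at once.
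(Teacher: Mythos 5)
Your argument is correct and is exactly the standard extension-by-zero reduction the paper has in mind when it states Corollary \ref{cor2.3} as a ``direct consequence'' of Theorem \ref{thm2.2} (the same device as the $\chi_{t<T}$ truncation at the end of the proof of Theorem \ref{thm2.2}); the trace and zero-extension facts you invoke are standard and suffice. Note only that your estimate comes out with $N$ independent of $T$, which is even slightly stronger than the stated $N(d,p,\delta,T)$.
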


\begin{corollary}
                                    \label{cor3.3}
Let $p\in (1,\infty)$  and
$u \in W_{p,\text{loc}}^{1,2}$.
Then for any $(t_0,x_0)\in \bR^{d+1}$ and $0 < r<R < \infty$,
\begin{multline*}
\| u_t \|_{L_p(Q_r(t_0,x_0))}
+ \|D^2u\|_{L_p(Q_r(t_0,x_0))}\\
\le N \left(\| P_0 u \|_{L_p(Q_{R}(t_0,x_0))}
+ \|u \|_{L_p(Q_{R}(t_0,x_0))} + \|Du\|_{L_p(Q_{R}(t_0,x_0))}\right),
\end{multline*}
where $N = N(d,\delta,r,R)$.
\end{corollary}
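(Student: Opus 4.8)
The plan is to derive the corollary from the global a priori estimate of Theorem~\ref{thm2.2} by a routine localization. Fix $(t_0,x_0)$ and choose a cutoff $\eta(t,x)=\zeta_1(t)\zeta_2(x)$ with $\zeta_2\in C_0^\infty(B_R(x_0))$, $\zeta_2\equiv1$ on $B_r(x_0)$, $0\le\zeta_2\le1$, and $\zeta_1\in C^\infty(\bR)$, $\zeta_1\equiv1$ on $[t_0-r^2,\infty)$, $\zeta_1\equiv0$ on $(-\infty,t_0-R^2]$, $0\le\zeta_1\le1$; then $\eta\equiv1$ on $Q_r(t_0,x_0)$ and $|\eta_t|+|D\eta|+|D^2\eta|\le N(r,R)$. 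Put $v:=u\eta$. Since $u\in W^{1,2}_{p,\text{loc}}$ and, viewed as a function on $\bR^{d+1}_{t_0}=(-\infty,t_0)\times\bR^d$, $\eta$ is smooth with support contained in the bounded set $[t_0-R^2,t_0)\times\overline{B_R(x_0)}$, we get $v\in W^{1,2}_p(\bR^{d+1}_{t_0})$.

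Next I would compute, using the symmetry of $a^{ij}$, that $P_0v=\eta\,P_0u-\eta_t\,u+2a^{ij}(D_i\eta)(D_ju)+a^{ij}(D_{ij}\eta)\,u$. Since $|a^{ij}|\le\delta^{-1}$ by \eqref{ellipticity}, the last three terms are supported in $Q_R(t_0,x_0)$ and bounded pointwise there by $N(\delta,r,R)\,(|u|+|Du|)$, whence
\[
\|P_0v\|_{L_p(\bR^{d+1}_{t_0})}\le N(\|P_0u\|_{L_p(Q_R(t_0,x_0))}+\|u\|_{L_p(Q_R(t_0,x_0))}+\|Du\|_{L_p(Q_R(t_0,x_0))})
\]
with $N=N(d,p,\delta,r,R)$.

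Applying Theorem~\ref{thm2.2} with terminal time $T=t_0$ and $\lambda=0$ to $v$ gives $\|v_t\|_{L_p(\bR^{d+1}_{t_0})}+\|D^2v\|_{L_p(\bR^{d+1}_{t_0})}\le N(d,p,\delta)\,\|P_0v\|_{L_p(\bR^{d+1}_{t_0})}$. Since $\eta\equiv1$ on the open set $Q_r(t_0,x_0)$, there $v_t=u_t$ and $D^2v=D^2u$; restricting the left-hand side to $Q_r(t_0,x_0)$ and combining with the previous bound yields the asserted estimate.

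The argument is essentially standard, so there is no real obstacle; the one point deserving care is the choice of time domain. A continuous cutoff cannot both equal $1$ on $(t_0-r^2,t_0)$ and vanish at $t=t_0$, so $u\eta$ fails to lie in $W^{1,2}_p(\bR^{d+1})$ unless one also truncates for $t$ above $t_0$, which would force $P_0u$, $Du$, and $u$ to appear on a cylinder strictly larger than $Q_R(t_0,x_0)$ on the right-hand side. Working in $W^{1,2}_p(\bR^{d+1}_{t_0})$, where no condition is imposed at the terminal time, sidesteps this — which is precisely why Theorem~\ref{thm2.2} was formulated for an arbitrary terminal time $T$.
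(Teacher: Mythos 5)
Your proof is correct and is exactly the paper's argument: the paper's proof of Corollary \ref{cor3.3} is precisely to apply Theorem \ref{thm2.2} to $u\eta$ for a suitable cutoff $\eta$, which is what you carry out (with the sensible choice of terminal time $T=t_0$ so that no truncation above $t_0$ is needed). Your computation of $P_0(u\eta)$ and the bound on the commutator terms by $|u|+|Du|$ on $Q_R(t_0,x_0)$ fill in the routine details the paper omits.
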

\begin{proof}
It suffices to apply Theorem \ref{thm2.2} on $u\eta$, where $\eta$ is a suitable cutoff function.
\end{proof}

We now state and prove the following useful theorem, which gives a H\"older estimate of $D_{xx'}u$.
\begin{theorem}
                                            \label{thm3.5}
Let $p\in (1,\infty)$ and $u\in C_0^\infty$. Assume $P_0u=0$ in $Q_2(t_0,x_0)$ for some $(t_0,x_0)\in \bR^{d+1}$. Then for any $\alpha\in (0,1)$, we have
\begin{equation}
                                            \label{eq27.4.18}
\|D_{xx'}u\|_{C^{\alpha/2,\alpha}(Q_1(t_0,x_0))}\le N(\|u\|_{L_p(Q_2(t_0,x_0))}+\|Du\|_{L_p(Q_2(t_0,x_0))})
\end{equation}
for some constant $N=N(d,\delta,\alpha,p)$.
\end{theorem}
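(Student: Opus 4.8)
The plan is to exploit the special structure of $P_0$, whose coefficients $a^{ij}=a^{ij}(x^1)$ depend only on $x^1$, to differentiate the equation repeatedly in the ``good'' directions $x'=(x^2,\dots,x^d)$ and $t$ and bootstrap regularity via Corollary \ref{cor3.3} and the embedding Lemma \ref{lem3.4}. First I would observe that since $a^{ij}$ is independent of $x'$ and $t$, for every multi-index $\beta$ in the tangential variables and every $k\ge 0$ the function $D_t^k D_{x'}^\beta u$ again satisfies $P_0(D_t^k D_{x'}^\beta u)=0$ in, say, $Q_{3/2}(t_0,x_0)$ (after shrinking the cylinder). Applying Corollary \ref{cor3.3} on a chain of shrinking cylinders between radius $1$ and $2$, one gets
\begin{equation*}
\|D_t^k D_{x'}^\beta u\|_{W^{1,2}_p(Q_{r'}(t_0,x_0))}\le N\big(\|u\|_{L_p(Q_2(t_0,x_0))}+\|Du\|_{L_p(Q_2(t_0,x_0))}\big)
\end{equation*}
for any fixed $k,\beta$ and radii $1\le r'<2$, where at each step one also controls the lower-order terms $\|D_t^k D_{x'}^\beta u\|_{L_p}$ and $\|D(D_t^kD_{x'}^\beta u)\|_{L_p}$ appearing on the right of Corollary \ref{cor3.3} by the previously established bounds (this is where one must be slightly careful: the right-hand side of Corollary \ref{cor3.3} contains $\|Du\|_{L_p}$, so one should bootstrap by first bounding all pure-tangential derivatives $D_{x'}^\beta u$ in $W^{1,2}_p$, then $\|D_1 D_{x'}^\beta u\|_{L_p}$ via the equation, and so on).

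The next step is to convert these $L_p$ bounds on high-order tangential and time derivatives into a H\"older bound on $D_{xx'}u$. Note $D_{xx'}u$ means $D_i D_j u$ with $j\ge 2$; these are exactly the derivatives obtained by differentiating once in a tangential direction and once in any direction. From the previous step I have control, in $L_p(Q_{3/2})$, of $D_t^k D_{x'}^\beta(D_i u)$ for $i=2,\dots,d$ and all $k,\beta$, and using the equation $u_t=a^{ij}D_{ij}u$ (so that $D_1$-derivatives of tangential derivatives are recovered from $u_t$, $\Delta_{d-1}$-type terms, etc.) also of $D_1 D_{x'}(D_iu)$, $D_1^2 D_{x'}^{\beta'} u$ with $|\beta'|\ge 1$, and so forth. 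In short, I can bound in $L_p(Q_{3/2})$ all derivatives of $D_{x'}u$ up to any finite order (mixing $D_t,D_x$ freely, using the equation each time a second $D_1$ appears). Then an application of the parabolic Sobolev embedding — Lemma \ref{lem3.4} to pass from $L_p$ to $L_{q}$ with larger $q$, iterated finitely many times, followed by the standard Sobolev–Morrey embedding $W^{1,2}_q\hookrightarrow C^{\alpha/2,\alpha}$ for $q$ large — yields
\begin{equation*}
\|D_{xx'}u\|_{C^{\alpha/2,\alpha}(Q_1(t_0,x_0))}\le N\sum_{\text{finitely many }\gamma}\|D^\gamma(D_{x'}u)\|_{L_p(Q_{3/2}(t_0,x_0))}\le N\big(\|u\|_{L_p(Q_2)}+\|Du\|_{L_p(Q_2)}\big),
\end{equation*}
which is \eqref{eq27.4.18}. (For $\alpha$ close to $1$ one needs $q>(d+2)/(1-\alpha)$ or so; Lemma \ref{lem3.4} lets one reach arbitrarily large $q$ in finitely many steps since at each step $1/q$ can drop by nearly $1/(d+2)$.)

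The main obstacle is organizing the bootstrap cleanly: Corollary \ref{cor3.3} bounds $\|D^2 w\|_{L_p}+\|w_t\|_{L_p}$ by $\|P_0w\|_{L_p}+\|w\|_{L_p}+\|Dw\|_{L_p}$, so one does not get a self-contained estimate on all derivatives in one shot — the $\|Dw\|_{L_p}$ term must itself be absorbed/estimated. The resolution is to run the induction on the total number of tangential-plus-time derivatives: at stage $m$ one controls $\{D_t^kD_{x'}^\beta u: 2k+|\beta|\le m\}$ in $W^{1,2}_p$ on a cylinder of radius $r_m$, where $r_m\downarrow 1$; the $\|Dw\|$ term at stage $m+1$ involves only stage-$\le m$ quantities on the slightly larger cylinder $r_m$, and second $D_1$-derivatives are always eliminated using $D_1^2 u = (a^{11})^{-1}(u_t - 2\sum_{j\ge2}a^{1j}D_{1j}u - \sum_{i,j\ge2}a^{ij}D_{ij}u)$, valid since $P_0u=0$ and $a^{11}\ge\delta$. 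Once this is set up, everything else is routine.
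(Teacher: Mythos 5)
Your overall strategy---differentiating in the tangential and time directions (legitimate since $a^{ij}=a^{ij}(x^1)$), using Corollary \ref{cor3.3} and Lemma \ref{lem3.4}, and finishing with a parabolic Morrey embedding---is the same circle of ideas as the paper's proof, but two points fail as written, and they occur exactly at the crux. First, the claim that you can bound in $L_p(Q_{3/2})$ \emph{all} derivatives of $D_{x'}u$ of any finite order, ``mixing $D_t,D_x$ freely, using the equation each time a second $D_1$ appears,'' is false: once $D_1^2u$ has been eliminated via $D_1^2u=(a^{11})^{-1}\bigl(u_t-2\sum_{j\ge2}a^{1j}D_{1j}u-\sum_{i,j\ge2}a^{ij}D_{ij}u\bigr)$, any further $D_1$ falls on the coefficients, which are merely measurable in $x^1$, and the constant $N$ must not depend on their smoothness; so only derivatives containing at most two $D_1$'s admit estimates (via Corollary \ref{cor3.3} applied to $D_t^kD_{x'}^\beta u$, which do solve $P_0v=0$).

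Second, and more seriously, your concluding display---the $C^{\alpha/2,\alpha}$ norm of $D_{xx'}u$ bounded by finitely many $L_p$ norms, at the \emph{original} exponent $p$, of such derivatives of $D_{x'}u$---cannot follow from embedding theorems alone. For the worst component $g=D_1D_ju$, $j\ge2$, the controllable data give only one $x^1$-derivative of $g$ in $L_p$ (plus arbitrarily many $t,x'$-derivatives in $L_p$), and anisotropic embeddings then cap the H\"older exponent in the $x^1$ direction at about $1-1/p$: for instance $g(t,x)=h(x^1)\chi(t,x')$ with $h\in W^1_p(\bR)$ has all tangential/time derivatives and one $x^1$-derivative in $L_p$ yet is in general only $C^{1-1/p}$ in $x^1$. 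Since $p$ may be near $1$ and $\alpha$ near $1$, the asserted inequality is unreachable by embeddings; one must bootstrap the \emph{integrability exponent}. Lemma \ref{lem3.4} upgrades only $v$ and $Dv$, so it does not iterate by itself: after each application you must recover $v_t$ and $D^2v$ (in particular $D_1^2v$) at the new exponent by invoking the equation again---Corollary \ref{cor3.3} with $p_1$ in place of $p$ (or your algebraic identity applied to quantities already upgraded)---along a sequence $1/p_{j+1}<1/p_j-1/(d+2)$ until $p_m>(d+2)/(1-\alpha)$, and only then apply the Morrey embedding to $D_{x'}u\in W^{1,2}_{p_m}$. This re-invocation at each new exponent is precisely the paper's bootstrap; your proposal gestures at it (``iterated finitely many times'') but never states it, and once it is inserted the high-order tangential/time derivatives become unnecessary: the argument needs only $u$ and $D_{x'}u$.
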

\begin{proof}
Without loss of generality, we may assume $(t_0,x_0)=(0,0)$. We will prove the lemma by a bootstrap argument. Take an increasing sequence $p_j\in (1,\infty)$, $j = 0, 1, \cdots, m$, where $m$ depends only on $d$ and $\alpha$, such that
$$p_0 = p,\quad p_m>\frac {d+2}{1-\alpha},\quad \frac 1 {p_{j+1}}<\frac 1 {p_j}-\frac 1 {d + 2}.$$
Also we take a sequence of shrinking cylinders $$
Q^{(j)},\quad j=0,1,2,\cdots,3m+2$$ such that $Q^{(0)}=Q_2$ and $Q^{(3m+2)}=Q_1$.

By Corollary \ref{cor3.3}, we have
\begin{equation}
                                    \label{eq10.53}
\|u\|_{W^{1,2}_{p_0}(Q^{(1)})}\le N(\|u\|_{L_p(Q^{(0)})}+\|Du\|_{L_p(Q^{(0)})}):=NI.
\end{equation}
Then it follows from Lemma \ref{lem3.4} that
\begin{equation}
                                    \label{eq27.4.46}
\|u\|_{L_{p_1}(Q^{(2)})}+
\|Du\|_{L_{p_1}(Q^{(2)})}\le
N\|u\|_{W^{1,2}_{p_0}(Q^{(1)})}
\le NI.
\end{equation}
Since $D_{x'}u$ satisfies the same equation in $Q_2$, we have by \eqref{eq27.4.46} and \eqref{eq10.53}
\begin{equation}
                                    \label{eq27.4.49}
\|D_{xx'}u\|_{L_{p_1}(Q^{(3)})}
\le N(\|D_{x'}u\|_{L_{p_0}(Q^{(1)})}+\|D_{xx'}u\|_{L_{p_0}(Q^{(1)})})
\le NI.
\end{equation}
Now we write
$$
-u_t+a^{11}D_{11}u+\Delta_{d-1}u=\Delta_{d-1}u
-\sum_{ij>1}a^{ij}D_{ij}u
$$ and use Corollary \ref{cor3.3} with $p_1$ in place of $p$. This together with \eqref{eq27.4.46} and \eqref{eq27.4.49} gives
\begin{equation}
                                    \label{eq27.5.02}
\|u\|_{W^{1,2}_{p_1}(Q^{(4)})}
\le N(\|u\|_{L_{p_1}(Q^{(3)})}+\|Du\|_{L_{p_1}(Q^{(3)})}
+\|D_{xx'}u\|_{L_2(Q^{(3)})})
\le NI.
\end{equation}
We can iterate \eqref{eq27.4.46}-\eqref{eq27.5.02} along with the increasing sequence $p_j$ and shrinking cylinders $Q^{(j)}$. After $m$ steps, we reach
\begin{equation*}
\|u\|_{W_{p_m}^{1,2}(Q^{(3m+1)})}\le NI.
\end{equation*}
Again, since $D_{x'}u$ satisfies the same equation in $Q_2$, we have
\begin{equation}
                                    \label{eq11.32b}
\|D_{x'}u\|_{W_{p_m}^{1,2}(Q^{(3m+2)})}\le N(\|D_{x'}u\|_{L_{p}(Q^{(1)})}+\|D_{xx'}u\|_{L_{p}(Q^{(1)})})
\le NI.
\end{equation}
Finally, due to the classical Sobolev embedding theorem of parabolic type, \eqref{eq11.32b} gives
\begin{equation*}
\|D_{xx'}u\|_{C^{\alpha/2,\alpha}(Q_1)}\le NI.
\end{equation*}
The theorem is proved.
\end{proof}

\begin{corollary}
                                        \label{cor3.6}
Let $p\in (1,\infty)$ and $u\in C_0^\infty$. Suppose $ P_0 u=0$ in $Q_2(t_0,x_0)$ for some $(t_0,x_0)\in \bR^{d+1}$. Then for any $\alpha\in (0,1)$, we have
\begin{equation}
                                            \label{eq27.5.24}
[D_{xx'}u]_{C^{\alpha/2,\alpha}(Q_1(t_0,x_0))}\le N(\|u_t\|_{L_p(Q_2(t_0,x_0))}+\|D^2 u\|_{L_p(Q_2(t_0,x_0))}).
\end{equation}
for some constant $N=N(d,\delta,\alpha,p)$.
\end{corollary}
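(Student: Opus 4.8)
The plan is to deduce this from Theorem \ref{thm3.5} by the standard scaling/density trick that upgrades a H\"older seminorm estimate with right-hand side $\|u\|_{L_p}+\|Du\|_{L_p}$ to one with right-hand side $\|u_t\|_{L_p}+\|D^2u\|_{L_p}$. The point is that the seminorm $[\,\cdot\,]_{C^{\alpha/2,\alpha}}$ is insensitive to adding an affine function of $x$ and a constant, i.e. to adding $c_0 + c_j x^j$; since $P_0$ annihilates such functions, we may subtract from $u$ a suitable affine-in-$x$ function (with coefficients chosen as averages of $u$ and $Du$ over $Q_2(t_0,x_0)$, or more simply over a slightly smaller cylinder) without changing either side of \eqref{eq27.5.24} except for the $L_p$ norms on the right of \eqref{eq27.4.18}.

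First I would reduce to $(t_0,x_0)=(0,0)$ by translation. Next, set $\bar u = u - \ell$, where $\ell(t,x)$ is chosen so that $(\bar u)_{Q_2}=0$ and $(D\bar u)_{Q_2}=0$; concretely $\ell(x) = (u)_{Q_2} + (D_j u)_{Q_2}\,x^j$, which is affine in $x$ and $t$-independent, hence $P_0\ell = 0$ and $P_0 \bar u = 0$ in $Q_2$. Also $D_{xx'}\bar u = D_{xx'}u$ and $\bar u_t = u_t$, $D^2\bar u = D^2 u$. Apply Theorem \ref{thm3.5} to $\bar u$ (it is in $C_0^\infty$ after multiplying by a cutoff that is $1$ on $Q_2$, which does not affect the equation or the norms on $Q_1,Q_2$): this gives
\begin{equation*}
[D_{xx'}u]_{C^{\alpha/2,\alpha}(Q_1)} = [D_{xx'}\bar u]_{C^{\alpha/2,\alpha}(Q_1)} \le N\big(\|\bar u\|_{L_p(Q_2)} + \|D\bar u\|_{L_p(Q_2)}\big).
\end{equation*}
It then remains to bound $\|\bar u\|_{L_p(Q_2)} + \|D\bar u\|_{L_p(Q_2)}$ by $\|u_t\|_{L_p(Q_2)} + \|D^2u\|_{L_p(Q_2)}$, which is exactly a Poincar\'e-type inequality on the parabolic cylinder $Q_2$: since $\bar u$ and $D\bar u$ have zero mean over $Q_2$, a parabolic Poincar\'e inequality gives $\|D\bar u\|_{L_p(Q_2)} \le N(\|D^2\bar u\|_{L_p(Q_2)} + \|D\bar u_t\|_{L_p(Q_2)}^{\,?})$ — here one must be slightly careful because the parabolic Poincar\'e inequality for a function with zero spatial-time average controls it by $Dv$ and $v_t$, not by $Dv$ alone. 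The cleanest route: use $\|D\bar u\|_{L_p(Q_2)} \le N\|D^2 u\|_{L_p(Q_2')} $ on a cylinder with the same center via the interpolation inequality $\|D\bar u\|_{L_p} \le \varepsilon\|D^2\bar u\|_{L_p} + N_\varepsilon \|\bar u\|_{L_p}$ combined with Poincar\'e in the spatial variables slicewise (valid since $(D\bar u)(t,\cdot)$ need not have zero mean, so instead recenter slicewise), and likewise $\|\bar u\|_{L_p(Q_2)} \le N\|D\bar u\|_{L_p(Q_2)}$ by slicewise spatial Poincar\'e after subtracting the spatial mean — but the spatial means are controlled in turn using the equation $\bar u_t = a^{ij}D_{ij}\bar u - \lambda\bar u - \tilde f$ with $\lambda = 0$ and $\tilde f = 0$ here, so $(\bar u)(t,\cdot)$ has controlled $t$-derivative. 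In short, one plays the zero-average conditions against the equation $P_0\bar u=0$, which forces $\bar u_t = a^{ij}D_{ij}\bar u$, to trade all lower-order norms for $\|D^2 u\|_{L_p(Q_2)}$ (and a harmless factor, since $\|D^2u\|_{L_p(Q_2)} \le \|u_t\|_{L_p(Q_2)} + \|D^2u\|_{L_p(Q_2)}$ trivially; the $u_t$ term on the right of \eqref{eq27.5.24} is there precisely to absorb this).

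The main obstacle is this last bookkeeping step: getting $\|\bar u\|_{L_p(Q_2)}+\|D\bar u\|_{L_p(Q_2)} \le N(\|u_t\|_{L_p}+\|D^2u\|_{L_p})$ on the \emph{same} cylinder $Q_2$ is false without using the equation (a harmonic-in-space, time-independent linear function has $D^2u=u_t=0$ but nonzero $Du$), which is why the zero-average normalization of $\bar u$ is essential and must be exploited through a genuine parabolic Poincar\'e inequality rather than a naive one. A technically smoother alternative that avoids fussing over whether the Poincar\'e constant depends on the same radius: apply Theorem \ref{thm3.5} on $Q_2$ but estimate its right-hand side on a slightly \emph{smaller} cylinder $Q_{3/2}$ (redoing the proof of \ref{thm3.5} with $Q_{3/2},Q_2$ in place of $Q_1,Q_2$ costs nothing), then use Poincar\'e to pass from $Q_{3/2}$ up to $Q_2$ with room to spare, and use Corollary \ref{cor3.3} (with $P_0 u = 0$) on $Q_{3/2}\subset Q_2$ to bound $\|u\|_{L_p(Q_{3/2})}+\|Du\|_{L_p(Q_{3/2})} \le N(\|u\|_{L_p(Q_2)}+\|Du\|_{L_p(Q_2)})$ — still the wrong right-hand side. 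So in the end there is no escaping a Poincar\'e argument using the homogeneous equation; I would state it as a short lemma: if $P_0 v = 0$ in $Q_R$ and $(v)_{Q_R}=(Dv)_{Q_R}=0$ then $\|v\|_{L_p(Q_R)}+\|Dv\|_{L_p(Q_R)}\le NR\,\|D^2v\|_{L_p(Q_R)}$, proved by the standard Poincar\'e–Wirtinger inequality on $Q_R$ applied to $v$ and to $Dv$ together with $v_t = a^{ij}D_{ij}v$ to control $\|v_t\|_{L_p}$ by $\|D^2v\|_{L_p}$, and scaling in $R$.
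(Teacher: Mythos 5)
Your first step is exactly the paper's: subtract $\ell(x)=(u)_{Q_2}+x^j(D_ju)_{Q_2}$, note $P_0\ell=0$, and apply Theorem \ref{thm3.5} to $v=u-\ell$, reducing \eqref{eq27.5.24} to the Poincar\'e-type bound
$\|u-(u)_{Q_2}-x^j(D_ju)_{Q_2}\|_{L_p(Q_2)}+\|Du-(Du)_{Q_2}\|_{L_p(Q_2)}\le N(\|u_t\|_{L_p(Q_2)}+\|D^2u\|_{L_p(Q_2)})$.
The paper finishes by quoting precisely this inequality (Lemma 5.4 of \cite{Krylov_2007_mixed_VMO}), which holds for \emph{every} $W^{1,2}_p(Q_2)$ function, with no equation involved. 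Your assertion that such a bound ``is false without using the equation'' is mistaken: your would-be counterexample, a time-independent affine function, is annihilated by the very normalization you performed (after subtracting $\ell$ it becomes $0$), so it shows nothing. The $\|u_t\|$ term on the right-hand side is exactly what makes the purely analytic parabolic Poincar\'e inequality true; the homogeneous equation is not needed at this stage.

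The genuine gap is in your proposed proof of the ``short lemma.'' Applying the standard Poincar\'e--Wirtinger inequality on the space-time cylinder to $w=D_iv$ controls $\|D_iv\|_{L_p(Q_2)}$ by the full space-time gradient of $D_iv$, i.e.\ it produces the term $\|D_iv_t\|_{L_p(Q_2)}$, which is not dominated by $\|v_t\|_{L_p}+\|D^2v\|_{L_p}$; and you cannot trade it away through the equation, since $D_1v_t=D_1(a^{jk}D_{jk}v)$ would require differentiating $a^{jk}$ in $x^1$, where it is merely measurable. The correct way to handle the time variation of the means (and what the cited lemma effectively does) is to use weighted (mollified) spatial averages: for a smooth bump $\zeta$, $\frac{d}{dt}\int D_iv\,\zeta\,dx=-\int v_t\,D_i\zeta\,dx$, so only $\|v_t\|_{L_p}$ enters, never $\|Dv_t\|_{L_p}$; one then combines this with slicewise spatial Poincar\'e and the zero-mean normalizations. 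With that repair (or simply by invoking Lemma 5.4 of \cite{Krylov_2007_mixed_VMO}) your argument coincides with the paper's proof; as written, the last step does not go through.
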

\begin{proof}
Again we assume $(t_0,x_0)=(0,0)$.
We notice that $v:=u-(u)_{Q_2}-x^j(D_ju)_{Q_2}$ satisfies the same equation as $u$ in $Q_2$.
Therefore, by \eqref{eq27.4.18},
$$
[D_{xx'}u]_{C^{\alpha/2,\alpha}(Q_1)}=[D_{xx'}v]_{C^{\alpha/2,\alpha}(Q_1)}
$$
\begin{equation}
                                            \label{eq10.10}
\le N\|u-(u)_{Q_2}-x^j(D_ju)_{Q_2}\|_{L_p(Q_2)}
+\|Du-(Du)_{Q_2}\|_{L_p(Q_2)}.
\end{equation}
By Lemma 5.4 of \cite{Krylov_2007_mixed_VMO}, we see that the right-hand side of \eqref{eq10.10} is less than the right-hand side of \eqref{eq27.5.24}. The corollary is proved.
\end{proof}

\begin{corollary}
                                        \label{cor3.7}
Let $p\in (1,\infty)$, $\kappa\ge 2$, $r\in (0,\infty)$ and $u\in C_0^\infty$. Assume $ P_0 u-\lambda u=0$ in $Q_{\kappa r}(t_0,x_0)$ for some $(t_0,x_0)\in \bR^{d+1}$. Then for any $\alpha\in (0,1)$, we have
\begin{equation*}
\left(|D_{xx'}u-(D_{xx'}u)_{Q_r(t_0,x_0)}|^p\right)_{Q_r(t_0,x_0)}
\le N\kappa^{-\alpha p}
\left(|u_t|^p+|D^2u|^p\right)_{Q_{\kappa r}}.
\end{equation*}
for some constant $N=N(d,\delta,\alpha,p)$.
\end{corollary}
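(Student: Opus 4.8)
The plan is to reduce to $(t_0,x_0)=(0,0)$ by translation and then to read the statement as a dilated, $\lambda$-enriched form of Corollary~\ref{cor3.6}. The backbone will be the estimate
\begin{equation}
                        \label{eqDbk}
[D_{xx'}u]_{C^{\alpha/2,\alpha}(Q_{\kappa r/8})}\le N(\kappa r)^{-\alpha-(d+2)/p}\bigl(\|u_t\|_{L_p(Q_{\kappa r})}+\|D^2u\|_{L_p(Q_{\kappa r})}\bigr),
\end{equation}
valid whenever $P_0u-\lambda u=0$ in $Q_{\kappa r}$, with $N=N(d,\delta,\alpha,p)$. If $\kappa$ stays bounded the Corollary is immediate --- take the free constant on the left-hand side to be $0$, bound $|D_{xx'}u|\le|D^2u|$ on $Q_r\subset Q_{\kappa r}$, and absorb $\kappa^{-\alpha p}$ into $N$ --- so I may assume $\kappa\ge8$, which leaves ample room between $Q_r$, $Q_{\kappa r/8}$ and $Q_{\kappa r}$. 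Granting \eqref{eqDbk}, since $Q_r\subset Q_{\kappa r/8}$ and $D_{xx'}u$ is parabolically $\alpha$-H\"older there,
$$
\bigl(|D_{xx'}u-(D_{xx'}u)_{Q_r}|^p\bigr)_{Q_r}\le\bigl(\osc_{Q_r}D_{xx'}u\bigr)^p\le Nr^{\alpha p}[D_{xx'}u]^p_{C^{\alpha/2,\alpha}(Q_{\kappa r/8})};
$$
inserting \eqref{eqDbk} and writing $\|g\|_{L_p(Q_{\kappa r})}=(\kappa r)^{(d+2)/p}|Q_1|^{1/p}(|g|^p)_{Q_{\kappa r}}^{1/p}$, the powers of $\kappa r$ collapse to $r^{\alpha p}(\kappa r)^{-\alpha p}=\kappa^{-\alpha p}$, which is exactly the asserted inequality.

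To establish \eqref{eqDbk}, I would first apply the parabolic dilation $u(t,x)\mapsto u(\rho^2t,\rho x)$ with $\rho=\kappa r/8$; this preserves the class $\cA$ (it only replaces $a(x^1)$ by $a(\rho x^1)$ and $\lambda$ by $\mu:=\rho^2\lambda$), and tracking the homogeneities of $[D_{xx'}u]_{C^{\alpha/2,\alpha}}$ and of $\|u_t\|_{L_p},\|D^2u\|_{L_p}$ produces the prefactor $(\kappa r)^{-\alpha-(d+2)/p}$. Thus it suffices to prove: if $P_0u-\mu u=0$ in $Q_8$ then $[D_{xx'}u]_{C^{\alpha/2,\alpha}(Q_1)}\le N(\|u_t\|_{L_p(Q_8)}+\|D^2u\|_{L_p(Q_8)})$ with $N$ independent of $\mu\ge0$. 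I would split on the size of $\mu$. If $\mu\le1$, set $v=e^{\mu t}u$, so that $P_0v=0$ in $Q_8$; since $e^{\mu t}\le1$ on $\{t\le0\}$ and $\mu|u|\le|u_t|+N|D^2u|$ by the equation, one gets $\|v_t\|_{L_p(Q_8)}+\|D^2v\|_{L_p(Q_8)}\le N(\|u_t\|_{L_p(Q_8)}+\|D^2u\|_{L_p(Q_8)})$, and Corollary~\ref{cor3.6} bounds $[D_{xx'}v]_{C^{\alpha/2,\alpha}(Q_1)}$ by the same quantity. Passing back through $D_{xx'}u=e^{-\mu t}D_{xx'}v$ costs only $\|e^{-\mu t}\|_{L_\infty((-1,0))}$ and $[e^{-\mu t}]_{C^{\alpha/2}((-1,0))}$, both $\le N$ for $\mu\le1$, together with $\|D_{xx'}v\|_{L_\infty(Q_1)}$, which is controlled by Theorem~\ref{thm3.5} applied to $v$ and one further use of the equation.

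If instead $\mu>1$, the exponential substitution is of no help (it would cost a factor $e^{c\mu}$), and I would add one spatial variable: put $\hat u(t,x,z)=u(t,x)\cos(\sqrt{\mu}\,z)$, which satisfies the $(d+1)$-dimensional equation $-\hat u_t+\hat a^{ij}D_{ij}\hat u=0$ on $Q_8\times\bR$ with $\hat a=\operatorname{diag}(a(x^1),1)$ --- still depending only on the first spatial variable and still uniformly elliptic. After a cutoff in $z$, Corollary~\ref{cor3.6} in dimension $d+1$, restricted to $z=0$ (where $D_{x^ix^j}\hat u=D_{ij}u$ for $i,j\le d$), gives
$$
[D_{xx'}u]_{C^{\alpha/2,\alpha}(Q_1)}\le N\bigl(\|u_t\|_{L_p(Q_8)}+\|D^2u\|_{L_p(Q_8)}+\sqrt{\mu}\,\|Du\|_{L_p(Q_8)}+\mu\|u\|_{L_p(Q_8)}\bigr),
$$
the last two terms coming from $D_{x^iz}\hat u$ and $D_{zz}\hat u$. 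Now $\mu\|u\|_{L_p(Q_8)}$ is bounded by the equation, and for the gradient term one uses the interpolation $\|Du\|_{L_p(Q_4)}\le C(\|D^2u\|_{L_p(Q_8)}\|u\|_{L_p(Q_8)})^{1/2}+C\|u\|_{L_p(Q_8)}$, whence $\sqrt{\mu}\,\|Du\|_{L_p(Q_4)}\le C(\|D^2u\|_{L_p(Q_8)}\cdot\mu\|u\|_{L_p(Q_8)})^{1/2}+C(\mu\|u\|_{L_p(Q_8)}\cdot\|u\|_{L_p(Q_8)})^{1/2}$; combined with $\mu\|u\|_{L_p(Q_8)}\le\|u_t\|_{L_p(Q_8)}+N\|D^2u\|_{L_p(Q_8)}$ and $\|u\|_{L_p(Q_8)}\le\mu^{-1}(\|u_t\|_{L_p(Q_8)}+N\|D^2u\|_{L_p(Q_8)})$, and using $\mu>1$, the right-hand side closes up to $N(\|u_t\|_{L_p(Q_8)}+\|D^2u\|_{L_p(Q_8)})$. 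To be careful one threads a chain $Q_1\subset Q_4\subset Q_8$ so that the interpolation and the equation are invoked on strictly larger cylinders, adjusting the outer dilation to match.

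The one genuinely delicate point is exactly this absorption of the gradient term: it cannot be performed for small $\mu$, which is why the $e^{\mu t}$ reduction must be used there, and it becomes available only once $\mu$ (equivalently $(\kappa r)^2\lambda$) is bounded below --- so the dichotomy above is essentially forced. Everything else --- the dilation, the passage from a H\"older seminorm to an averaged oscillation, and the cancellation of the powers of $\kappa r$ --- is routine bookkeeping.
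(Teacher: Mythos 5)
Your skeleton is the same as the paper's: translate, rescale so the outer cylinder has a fixed size, bound the mean oscillation over $Q_r$ by $r^{\alpha p}$ times a parabolic H\"older seminorm on a larger cylinder, and control that seminorm by Corollary \ref{cor3.6}; the powers of $\kappa r$ then cancel to give $\kappa^{-\alpha p}$. Indeed the paper's entire proof is this computation after scaling to $r=2/\kappa$. Where you genuinely diverge is the zeroth-order term: you observed that Corollary \ref{cor3.6} is stated for $P_0u=0$ (its proof subtracts an affine function, which is incompatible with a $\lambda u$ term), and you restore uniformity in $\lambda\ge0$ by the dichotomy $v=e^{\mu t}u$ when $\mu=\rho^2\lambda\le1$ and Agmon's added variable $\cos(\sqrt{\mu}\,z)$ plus interpolation and the equation when $\mu>1$. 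The paper's one-line proof simply cites Corollary \ref{cor3.6} and never mentions $\lambda$, so as written it justifies only the case $\lambda=0$ (which is what is used in Theorem \ref{thm3.7}, $\lambda$ being brought back later by Agmon's idea in the proof of Theorem \ref{thm1}); your extra work is exactly what is needed to prove the corollary as literally stated for all $\lambda\ge0$, at the cost of considerably more length. Your scaling bookkeeping, the trivial case $2\le\kappa\le8$, the absorption of $\mu\|u\|_{L_p}$ and $\sqrt{\mu}\,\|Du\|_{L_p}$ via the equation and Gagliardo--Nirenberg, and the restriction of the $(d+1)$-dimensional estimate to the slice $z=0$ are all sound.

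One step needs a small repair: in the case $\mu\le1$ you bound $\|D_{xx'}v\|_{L_\infty(Q_1)}$ by ``Theorem \ref{thm3.5} applied to $v$ and one further use of the equation,'' but the right-hand side of Theorem \ref{thm3.5} is $\|v\|_{L_p(Q_2)}+\|Dv\|_{L_p(Q_2)}$, which cannot be dominated by $\|u_t\|_{L_p(Q_8)}+\|D^2u\|_{L_p(Q_8)}$ (constants already defeat such a bound). The fix is immediate: either apply Theorem \ref{thm3.5} to $v$ minus a suitable affine function of $x$ (this leaves $D_{xx'}v$ unchanged) and invoke Lemma 5.4 of \cite{Krylov_2007_mixed_VMO} exactly as in the proof of Corollary \ref{cor3.6}, or avoid Theorem \ref{thm3.5} altogether by writing $\|D_{xx'}v\|_{L_\infty(Q_1)}\le \left|(D_{xx'}v)_{Q_1}\right|+N[D_{xx'}v]_{C^{\alpha/2,\alpha}(Q_1)}$, both terms of which you already control by $\|u_t\|_{L_p(Q_8)}+\|D^2u\|_{L_p(Q_8)}$. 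With that adjustment your argument is complete.
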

\begin{proof}
By a scaling argument, it suffices to consider the case $r=2/\kappa\le 1$. Due to Corollary \ref{cor3.6}, we get
$$
\left(|D_{xx'}u-(D_{xx'}u)_{Q_r(t_0,x_0)}|^p
\right)_{Q_{2/\kappa}(t_0,x_0)}
\le N\kappa^{-p\alpha} [D_{xx'}u]_{C^{\alpha/2,\alpha}(Q_{2/\kappa}(t_0,x_0))}
$$
$$
\le N\kappa^{-p\alpha}
[D_{xx'}u]_{C^{\alpha/2,\alpha}(Q_1(t_0,x_0))}
\le N\kappa^{-p\alpha}\left(|u_t|^p+|D^2u|^p\right)_{Q_{2}}.
$$
\end{proof}

\begin{lemma}
                                     \label{lemma 9.10.1}
Let $p\in (1,\infty)$, $\kappa \ge 2$, and $r > 0$.
Assume that $u \in
C_0^{\infty}$  and $P_0 u =0$ in $Q_{\kappa r}$.
Then there exist constants
$N = N(d,p, \delta)$ and $\alpha=\alpha(d,p,\delta)\in (0,1]$ such that
\begin{equation*}
\left(|u_t  -
 (u_t)_{Q_r}|^p \right)_{Q_r}
\le N \kappa^{-p\alpha} \left(|u_t|^{p}\right)_{Q_{\kappa r}}.
\end{equation*}
\end{lemma}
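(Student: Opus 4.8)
The plan rests on the observation that, since the coefficients $a^{ij}=a^{ij}(x^1)$ do not depend on $t$, the function $u_t$ again solves the homogeneous equation: differentiating $-u_t+a^{ij}D_{ij}u=0$ in $t$ gives $P_0(u_t)=0$ a.e.\ in $Q_{\kappa r}$, and $u_t\in C_0^\infty$. Thus it suffices to prove an interior parabolic H\"older estimate for solutions of the homogeneous equation and then specialize it to $u_t$.

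First I would normalize. By translating (which only replaces $a^{ij}(x^1)$ by some $a^{ij}(x^1+c)\in\cA$) we may take $(t_0,x_0)=(0,0)$, and by the parabolic dilation $u\mapsto u(\rho^2\,\cdot,\rho\,\cdot)$ with $\rho=\kappa r/2$ — under which $P_0$ is sent to an operator of the same type and the claimed inequality is invariant — we may assume $\kappa r=2$. We may also assume $\kappa\ge4$, since for $2\le\kappa<4$ one has $(|u_t-(u_t)_{Q_{2/\kappa}}|^p)_{Q_{2/\kappa}}\le 2^p(|u_t|^p)_{Q_{2/\kappa}}\le 2^p\kappa^{d+2}(|u_t|^p)_{Q_2}$, which is already dominated by $N\kappa^{-p\alpha}(|u_t|^p)_{Q_2}$ with $N=N(d,p,\alpha)$. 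So the goal becomes: if $P_0v=0$ in $Q_2$ and $v\in C_0^\infty$, then
\[
\bigl(|v-(v)_{Q_{2/\kappa}}|^p\bigr)_{Q_{2/\kappa}}\le N\kappa^{-p\alpha}\|v\|_{L_p(Q_2)}^p
\]
(applied afterwards to $v=u_t$, with the harmless factor $|Q_2|$ relating $\|v\|_{L_p(Q_2)}^p$ and $(|v|^p)_{Q_2}$ absorbed into $N$).

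The main step is the interior estimate $[v]_{C^{\alpha/2,\alpha}(Q_{1/2})}\le N\|v\|_{L_p(Q_2)}$ for $P_0v=0$ in $Q_2$. I would obtain it as follows. A standard absorption argument — Corollary~\ref{cor3.3} applied to $v$ (which uses $P_0v=0$) together with the interpolation inequality $\|Dv\|_{L_p(Q_\sigma)}\le\varepsilon\|D^2v\|_{L_p(Q_{\sigma'})}+N_\varepsilon\|v\|_{L_p(Q_{\sigma'})}$, $\tfrac32\le\sigma<\sigma'\le2$, and a finite iteration over radii — gives $\|v\|_{W^{1,2}_p(Q_{3/2})}\le N\|v\|_{L_p(Q_2)}$. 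From here I would run the bootstrap of Theorem~\ref{thm3.5}, but directly for $v$: fix an increasing sequence $p=p_0<p_1<\cdots<p_m$ with $1/p_{j+1}<1/p_j-1/(d+2)$ and $p_m>d+2$ (the number of steps $m$ depending only on $d$ and $p$) and shrinking cylinders from $Q_{3/2}$ to $Q_1$; alternately apply Lemma~\ref{lem3.4} (upgrading $\|v\|_{L_{p_j}}+\|Dv\|_{L_{p_j}}$ to the same quantity with $p_{j+1}$) and Corollary~\ref{cor3.3} with $P_0v=0$ (recovering $\|v_t\|_{L_{p_{j+1}}}+\|D^2v\|_{L_{p_{j+1}}}$). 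Unlike in Theorem~\ref{thm3.5}, no differentiation in $x'$ is needed here, since it is $v$ itself — not a portion of $D^2v$ — whose regularity we want. This produces $\|v\|_{W^{1,2}_{p_m}(Q_1)}\le N\|v\|_{L_p(Q_2)}$ with $N=N(d,p,\delta)$, and since $p_m>d+2$ the parabolic Sobolev embedding (see, e.g., \cite{LSU}) yields $[v]_{C^{\alpha/2,\alpha}(Q_{1/2})}\le N\|v\|_{L_p(Q_2)}$ for every $\alpha\in(0,1)$.

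To finish, fix any $\alpha\in(0,1)$ (so $\alpha$ may even be taken absolute). Since $\kappa\ge4$ we have $Q_{2/\kappa}\subset Q_{1/2}$ with parabolic diameter $\le N\kappa^{-1}$, hence
\[
\bigl(|v-(v)_{Q_{2/\kappa}}|^p\bigr)_{Q_{2/\kappa}}\le\bigl(\osc_{Q_{2/\kappa}}v\bigr)^p\le N\kappa^{-p\alpha}[v]_{C^{\alpha/2,\alpha}(Q_{1/2})}^p\le N\kappa^{-p\alpha}\|v\|_{L_p(Q_2)}^p ,
\]
which, specialized to $v=u_t$ and combined with the reductions above, gives the lemma. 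The genuinely new ingredient is only the remark that $u_t$ satisfies the same homogeneous equation; the rest is routine, the main bookkeeping being the absorption argument and the verification that, after normalization, all constants depend only on $d$, $p$, $\delta$ (which they do, since the radii and the $p_j$ are then fixed functions of $d$ and $p$, and Corollary~\ref{cor3.3} and Lemma~\ref{lem3.4} carry constants of exactly this kind).
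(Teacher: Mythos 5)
Your argument is correct in substance, but the way you obtain the key oscillation estimate is genuinely different from the paper's. Both proofs start from the same two observations --- the inequality is invariant under parabolic scaling, and since $a^{ij}=a^{ij}(x^1)$ is time-independent, $u_t$ again solves $P_0u_t=0$ --- but at that point the paper simply quotes the classical interior H\"older (Krylov--Safonov) estimate for nondivergence parabolic equations with measurable coefficients together with a local maximum-principle bound (Lemma 4.2.4 of \cite{Kr85} and Theorem 7.21 of \cite{Li}), which gives $\osc_{Q_{1/\kappa}}u_t\le N\kappa^{-\alpha}\|u_t\|_{L_p(Q_1)}$ in two lines with a small exponent $\alpha=\alpha(d,p,\delta)$. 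You instead exploit the special structure $a^{ij}=a^{ij}(x^1)$ and rebuild the H\"older estimate from the paper's own $L_q$ machinery (Corollary \ref{cor3.3} and Lemma \ref{lem3.4}), bootstrapping $v=u_t$ itself up to $W^{1,2}_{p_m}$ with $p_m>d+2$ and concluding by the parabolic Sobolev embedding; this is longer but self-contained (no appeal to Krylov--Safonov) and yields the stronger statement that any $\alpha\in(0,1)$ works, which is more than the lemma needs. Two small points to tighten: the exponent condition should read $1/p_{j+1}>1/p_j-1/(d+2)$ --- the inequality as you copied it from Theorem \ref{thm3.5}, where it appears misprinted as well, would place $p_{j+1}$ beyond the critical exponent so that Lemma \ref{lem3.4} is not applicable; and the absorption step that removes $\|Dv\|_{L_p}$ from the right-hand side requires the usual iteration lemma with an infinite chain of radii (or $\varepsilon$ chosen in terms of the gap between radii), since stopping after finitely many steps leaves an uncontrolled $\|D^2v\|_{L_p(Q_2)}$ term. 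Neither issue affects the validity of your approach once stated correctly.
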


\begin{proof}
By using scaling
we reduce the general situation to the one in which $r=1$. Since $Lu_t=0$ in $Q_{\kappa r}$, by Lemma 4.2.4 of \cite{Kr85} and Theorem 7.21 of \cite{Li}
$$
\osc _{Q_{1/\kappa}}u_t
\le N\kappa^{-\alpha} \|u_t \|_{L_p(Q_1)}
$$
with $\alpha$ and $N$ as in the statement. Scaling this estimate
shows that
$$
\osc _{Q_{1}}u_t
\le N \kappa^{-\alpha}\left(| u_t|^{p}\right)_{Q_{\kappa }}^{1/p}.
$$
It only remains to observe that
$$
\left(| u_t  -
 ( u_t)_{Q_1}|^p \right)_{Q_1}\leq N
 (\osc _{Q_{1}}u_t)^p.
$$
The lemma is proved.
\end{proof}

\begin{theorem}
                                        \label{thm3.7}
Let $p\in (1,\infty)$, $\kappa \ge 4$, $r > 0$ and $u\in C_0^\infty$. Let $\alpha$ be the constant in Lemma \ref{lemma 9.10.1}. Then for any $(t_0,x_0)\in \bR^{d+1}$, we have
$$
\left(|D_{xx'}u-(D_{xx'}u)_{Q_r(t_0,x_0)}|^p\right)_{Q_r(t_0,x_0)}
+\left(|u_t-(u_t)_{Q_r(t_0,x_0)}|^p\right)_{Q_r(t_0,x_0)}
$$
\begin{equation*}
\le N\kappa^{d+2}\left(|P_0u|^p\right)_{Q_{\kappa r}(t_0,x_0)}+N\kappa^{-\alpha p}
\left(|u_t|^p+|D^2 u|^p\right)_{Q_{\kappa r}(t_0,x_0)}.
\end{equation*}
for some constant $N=N(d,\delta,p)$,
\end{theorem}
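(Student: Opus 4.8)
The plan is to peel off from $u$ a corrector carrying the right-hand side $P_0u$ and apply the homogeneous estimates of Corollary~\ref{cor3.7} and Lemma~\ref{lemma 9.10.1} to what remains. After a translation we may assume $(t_0,x_0)=(0,0)$, and since $\kappa\ge4>1$ the estimate of Lemma~\ref{lemma 9.10.1} only weakens if $\alpha$ is decreased, so we may assume $\alpha\in(0,1)$, which is also the range in which Corollary~\ref{cor3.7} applies with the same exponent. Set $g=\chi_{Q_{\kappa r}}\,P_0u$. Since $u\in C_0^\infty$, $g\in L_p(\bR^{d+1})$ and $g$ is supported in a bounded time interval, so there is a $w\in W^{1,2}_p(\bR^{d+1})$ with $P_0w=g$ (this is standard parabolic theory for a compactly supported right-hand side: solve the backward Cauchy problem with zero terminal data above the time-support of $g$ and extend by zero), and by the a priori estimate of Theorem~\ref{thm2.2} with $\lambda=0$,
\begin{equation*}
\|w_t\|_{L_p(\bR^{d+1})}+\|D^2w\|_{L_p(\bR^{d+1})}\le N\|g\|_{L_p}=N\|P_0u\|_{L_p(Q_{\kappa r})}.
\end{equation*}
Put $v=u-w$. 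On $Q_{\kappa r}$ we have $P_0v=P_0u-g=0$, so $v\in W^{1,2}_{p,\text{loc}}$ solves the homogeneous equation there, and the conclusions of Corollary~\ref{cor3.7} (with $\lambda=0$) and of Lemma~\ref{lemma 9.10.1} hold for $v$, since their proofs only invoke interior estimates for solutions of $P_0v=0$ and the hypothesis that the solution lie in $C_0^\infty$ is inessential there.

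Next I would estimate the two oscillations on $Q_r$ by splitting $D_{xx'}u=D_{xx'}v+D_{xx'}w$ and $u_t=v_t+w_t$ and using, for each such quantity (call it $h$, with $v$- and $w$-parts $h^v,h^w$),
\begin{equation*}
(|h-(h)_{Q_r}|^p)_{Q_r}\le 2^{p-1}(|h^v-(h^v)_{Q_r}|^p)_{Q_r}+2^{2p-1}(|h^w|^p)_{Q_r}.
\end{equation*}
For the $v$-contributions, Corollary~\ref{cor3.7} and Lemma~\ref{lemma 9.10.1} bound the first term by $N\kappa^{-\alpha p}(|v_t|^p+|D^2v|^p)_{Q_{\kappa r}}$; since $v=u-w$, replacing $v$ by $u$ at the price of adding the average over $Q_{\kappa r}$ of the a priori bound above turns this into $N\kappa^{-\alpha p}\left((|u_t|^p+|D^2u|^p)_{Q_{\kappa r}}+(|P_0u|^p)_{Q_{\kappa r}}\right)$. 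For the $w$-contributions, I would simply write
\begin{equation*}
(|h^w|^p)_{Q_r}\le\frac{1}{|Q_r|}\|h^w\|_{L_p(\bR^{d+1})}^p\le\frac{N}{|Q_r|}\|P_0u\|_{L_p(Q_{\kappa r})}^p=N\,\frac{|Q_{\kappa r}|}{|Q_r|}\,(|P_0u|^p)_{Q_{\kappa r}}=N\kappa^{d+2}(|P_0u|^p)_{Q_{\kappa r}},
\end{equation*}
which is exactly where the factor $\kappa^{d+2}$ enters. Adding the bounds for the $D_{xx'}u$-oscillation and the $u_t$-oscillation and absorbing $\kappa^{-\alpha p}\le1\le\kappa^{d+2}$ into the $\kappa^{d+2}$ term yields the asserted inequality.

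I do not expect a serious obstacle: this is the standard "homogeneous part plus corrector" decomposition. The one step needing a little care is the construction of $w$, i.e.\ solvability of $P_0w=\chi_{Q_{\kappa r}}P_0u$ at $\lambda=0$; this works only because $P_0u$ is compactly supported in time, so the corrector can be taken in $W^{1,2}_p(\bR^{d+1})$ with the clean bound above instead of carrying extra lower-order terms. The remaining point is the bookkeeping of the powers of $\kappa$: one must measure the corrector's contribution against $|Q_r|$ (not $|Q_{\kappa r}|$), so its constant is the harmless $\kappa^{d+2}$ multiplying $(|P_0u|^p)_{Q_{\kappa r}}$, while every term multiplying $(|u_t|^p+|D^2u|^p)_{Q_{\kappa r}}$ keeps the gain $\kappa^{-\alpha p}$.
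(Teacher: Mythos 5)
Your overall scheme --- split $u=v+w$ with a corrector $w$ carrying $\chi_{Q_{\kappa r}}P_0u$, apply Corollary~\ref{cor3.7} and Lemma~\ref{lemma 9.10.1} to the homogeneous part $v$, and charge the corrector against $|Q_r|$ to produce the factor $\kappa^{d+2}$ --- is exactly the argument the paper has in mind (it cites Corollary~\ref{cor2.3}, Corollary~\ref{cor3.7} and Lemma~\ref{lemma 9.10.1} and the proof of Theorem 4.5 of \cite{DongKrylov}), and your bookkeeping of the powers of $\kappa$ is correct. The genuine gap is in the construction of $w$. First, the time direction is reversed: with $P_0u=-u_t+a^{ij}D_{ij}u$ and the paper's conventions (data imposed at $t=0$ in Corollary~\ref{cor2.3}, cylinders $Q_r(t,x)=(t-r^2,t)\times B_r(x)$ with the distinguished point at the top, solvability on $\bR^{d+1}_T=(-\infty,T)\times\bR^d$), the operator is forward parabolic, so one must impose zero data \emph{below} the time support of $g$ and solve forward; ``zero terminal data above the support'' is the ill-posed direction. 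Second, and more importantly, the assertion that there is a $w\in W^{1,2}_p(\bR^{d+1})$ with $P_0w=g$ at $\lambda=0$ is not available: Theorem~\ref{thm2.2} gives existence only for $\lambda>0$, and the solution started from zero data below the support of $g$ persists after the forcing is switched off, so $w$ itself need not be $p$-integrable in time over all of $\bR^{d+1}$; hence you cannot invoke the $\lambda=0$ a priori estimate of Theorem~\ref{thm2.2} for $w$ as stated, and ``extend by zero'' is only legitimate on the past side.

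The repair is precisely the paper's route, and it is why Corollary~\ref{cor2.3} is quoted: after translating $(t_0,x_0)$ to the origin, solve $P_0w=\chi_{Q_{\kappa r}}P_0u$ on the finite strip $(-(\kappa r)^2,0)\times\bR^d$ with $w(-(\kappa r)^2,\cdot)=0$ (the $\lambda=0$ case follows from Corollary~\ref{cor2.3} by the substitution $w=e^{c(t+(\kappa r)^2)}v$), extend $w$ by zero for earlier times, and then apply the $\lambda=0$ estimate of Theorem~\ref{thm2.2} on $\bR^{d+1}_0$ to the extended function to get $\|w_t\|_{L_p}+\|D^2w\|_{L_p}\le N\|P_0u\|_{L_p(Q_{\kappa r})}$ with $N=N(d,p,\delta)$ independent of $\kappa$, $r$ and the length of the strip. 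With this $w$, the rest of your argument goes through unchanged; your remark that Corollary~\ref{cor3.7} and Lemma~\ref{lemma 9.10.1} apply to $v=u-w\in W^{1,2}_{p,\mathrm{loc}}$ because their proofs are purely local is acceptable and is also implicit in the paper's own one-line proof.
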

\begin{proof}
The theorem follows from Corollary \ref{cor2.3}, Corollary \ref{cor3.7} and Lemma \ref{lemma 9.10.1}; see, for instance, the proof of Theorem 4.5 \cite{DongKrylov}.
\end{proof}

We finish this section by recalling a generalized version of the Fefferman-Stein theorem proved in \cite{Kr08}.
To state this theorem,
let
$$
\bC_n=\{C_n(i_0,i_1,\cdots,i_d),i_0,\cdots, i_d\in \bZ\},\quad n\in \bZ
$$
be the filtration of partitions given by parabolic dyadic cubes, where
\begin{multline*}
C_n(i_0,i_1,\cdots, i_d)\\
=[i_0 2^{-2n},(i_0+1) 2^{-2n})\times [i_1 2^{-n}, (i_1 + 1)2^{-n})\times \cdots \times [i_d 2^{-n}, (i_d + 1)2^{-n}).
\end{multline*}
\begin{theorem}
                                    \label{generalStein}
Let $p\in (0,1)$, $U,V,H\in L_1$. Assume $V\ge |U|$, $H\ge 0$ and for any $n\in \bZ$ and $C\in \bC_n$ there exists a measurable function $U^C$ given on $C$ such that $|U|\le U^C\le V$ on $C$ and
$$
\min\left\{\int_C|U-(U)_C|\,dx\,dt,
\int_C|U^C-(U^C)_C|\,dx\,dt\right\}
\le \int_C H\,dx\,dt.
$$ Then we have
\begin{equation*}
\|U\|_{L_p}^p\leq N\|H\|_{L_p}\|V\|_{L_p}^{p-1},
\end{equation*}
provided that $H,V\in L_p$.
\end{theorem}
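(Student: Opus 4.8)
The plan is to prove the estimate by a dyadic Calder\'on--Zygmund stopping-time argument in the quasi-normed range $0<p<1$, following the pattern of Krylov's proof (and of the classical dyadic Fefferman--Stein theorem, cf.\ \cite{Krylov_2005,Krylov_2007_mixed_VMO}). Two features are new relative to the classical statement: the auxiliary functions $U^C$ together with the $\min$ in the oscillation hypothesis, and the exponent $p<1$. The latter is what forces the factor $\|V\|_{L_p}^{p-1}$ on the right-hand side: for $p\le 1$ the dyadic parabolic maximal operator is no longer bounded on $L_p$, so the usual reduction ``$\|U\|_{L_p}\lesssim\|U^{\#}\|_{L_p}$'' must be paid for by using the pointwise bound $|U|\le V$.

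First I would reduce to $U\in L_p\cap L_\infty$ with bounded support by truncating $U$ at a large height $M$ and outside a large parabolic ball (and truncating $V$ and each $U^C$ accordingly): the hypotheses persist with the same $H$, one proves the estimate with a constant independent of $M$ and of the radius, and passes to the limit by monotone convergence; equivalently one bounds $\|\bE_N U-\bE_{-N}U\|_{L_p}$ uniformly in $N$, where $\bE_n$ is conditional expectation with respect to $\bC_n$, and lets $N\to\infty$ (valid since $U\in L_1$). Next, for each $\lambda>0$ I would write $\{\bM_{dy}|U|>\lambda\}$ (a set of finite measure, by the weak type $(1,1)$ of the dyadic maximal function) as the disjoint union of the maximal dyadic cubes $Q_j$ on which $\lambda<(|U|)_{Q_j}\le 2^{d+2}\lambda$; in particular $(V)_{Q_j}\ge(|U|)_{Q_j}>\lambda$, so $\{\bM_{dy}|U|>\lambda\}\subset\{\bM_{dy}V>\lambda\}$. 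On each $Q_j$ the hypothesis offers two alternatives. If $\int_{Q_j}|U-(U)_{Q_j}|\le\int_{Q_j}H$, the standard relative Calder\'on--Zygmund estimate gives $|\{\bM_{dy}|U|>2^{d+3}\lambda\}\cap Q_j|\le N\lambda^{-1}\int_{Q_j}H$, and distributing this yields a bound for $\int_{Q_j}|U|^p$ in terms of $|Q_j|^{1-p}\big(\int_{Q_j}H\big)^p$ and the average $(|U|)_{Q_j}$. If instead only $\int_{Q_j}|U^{Q_j}-(U^{Q_j})_{Q_j}|\le\int_{Q_j}H$ is available, one switches to $U^{Q_j}$: since $0\le|U|\le U^{Q_j}\le V$ on $Q_j$, the localized dyadic maximal function of $|U|$ on $Q_j$ is dominated by that of $U^{Q_j}$, and writing $U^{Q_j}=(U^{Q_j})_{Q_j}+\big(U^{Q_j}-(U^{Q_j})_{Q_j}\big)$ controls $\int_{Q_j}|U|^p$ by $|Q_j|^{1-p}\big(\int_{Q_j}H\big)^p$ plus $|Q_j|\,(U^{Q_j})_{Q_j}^p\le|Q_j|^{1-p}\big(\int_{Q_j}V\big)^p$.

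Then I would sum over $j$ (the $Q_j$ are pairwise disjoint at a fixed scale), using the inequalities valid for $p<1$: $\big(\sum a_j\big)^p\le\sum a_j^p$, $\int_E f^p\le|E|^{1-p}\big(\int_E f\big)^p$, the H\"older bound $\sum_j|Q_j|^{1-p}\big(\int_{Q_j}H\big)^p\le\big(\sum_j|Q_j|\big)^{1-p}\big(\sum_j\int_{Q_j}H\big)^p$, and $|\{\bM_{dy}|U|>\lambda\}|\le|\{\bM_{dy}V>\lambda\}|\le N\lambda^{-1}\int_{\{V>\lambda/2\}}V$. This produces a layer/distribution-function estimate for $|U|$ in terms of $H$ and $V$, which one feeds into $\|U\|_{L_p}^p=p\int_0^\infty\lambda^{p-1}|\{|U|>\lambda\}|\,d\lambda$, splitting the $\lambda$-integral at a level comparable to the local size of $V$: the part where $V$ is much larger than $\lambda$ is absorbed into a fixed multiple of $\|V\|_{L_p}^p$ (hence, after normalization, into the right-hand side), while the complementary part, estimated through the good-$\lambda$ inequality and Fubini, produces $\|H\|_{L_p}\|V\|_{L_p}^{p-1}$; a residual multiple of $\|U\|_{L_p}^p$ is absorbed by choosing the dilation and cutoff constants small.

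I expect the main obstacle to be the interaction of the second alternative with the final quasi-norm bookkeeping. On a cube where only that alternative holds, $(U^{Q_j})_{Q_j}$ can be of order $(V)_{Q_j}$, hence $\gg\lambda$, so small averaged oscillation of $U^{Q_j}$ does not by itself make $\bM_{dy}|U|$ small there; only the sandwich $|U|\le U^{Q_j}\le V$ rescues the estimate, and one must track the exponents so that $H$ emerges with the \emph{first} power of $\|H\|_{L_p}$ (not $\|H\|_{L_1}$, nor the weaker $\int H\,V^{p-1}$) and $V$ exactly as $\|V\|_{L_p}^{p-1}$. Carrying out this balance, with the $L_p$-boundedness of the maximal function replaced throughout by the explicit $\lambda$-cutoff and by convexity inequalities that hold only for $p<1$, is the technical heart of the argument.
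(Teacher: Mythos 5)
First, a point about the comparison itself: the paper does not prove Theorem \ref{generalStein}; it is recalled from \cite{Kr08}, where it is established for $p\in(1,\infty)$. The ``$p\in(0,1)$'' in the statement here is a typo: the theorem is applied in Theorems \ref{thm4.4} and \ref{thm5.7} only with exponents in $(1,\infty)$, and the subsequent step $\|H\|_{L_p}\|V\|_{L_p}^{p-1}\le N(\epsilon)\|H\|_{L_p}^p+\epsilon\|V\|_{L_p}^p$ is Young's inequality, which needs $p>1$. You took $p\in(0,1)$ at face value and built the entire argument on inequalities that hold only for $p<1$ (e.g.\ $(\sum a_j)^p\le\sum a_j^p$, $\int_E f^p\le|E|^{1-p}(\int_E f)^p$), but in that range the assertion is false, so no proof can succeed. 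A counterexample: let $C_0=[0,1)\times[0,1)^d\in\bC_0$, let $C_m\in\bC_{-m}$ be its $m$-th dyadic ancestor, and take $U=\chi_{C_0}$, $H=2\chi_{C_0}$, $V=\chi_{C_m}$, and $U^C\equiv U$ for every $C$. Every dyadic cube is either contained in $C_0$, disjoint from $C_0$, or an ancestor of $C_0$; in the first two cases $U$ is constant on $C$, and in the third $\int_C|U-(U)_C|\le 2=\int_C H$, so all hypotheses hold with $U,H$ independent of $m$, while for $p<1$ one has $N\|H\|_{L_p}\|V\|_{L_p}^{p-1}=2N|C_m|^{(p-1)/p}\to 0$ as $m\to\infty$ even though $\|U\|_{L_p}^p=1$.

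Second, and independently of the exponent range, the step you yourself identify as the technical heart is not carried out, and the proposed way around it is invalid. On the stopping cubes where only the $U^{Q_j}$-alternative is available and $(U^{Q_j})_{Q_j}$ is of the size of $(V)_{Q_j}$, your bound for the contribution is $|Q_j|^{1-p}\bigl(\int_{Q_j}V\bigr)^p$, and after summation in $j$ and integration in $\lambda$ this yields, as you say, ``a fixed multiple of $\|V\|_{L_p}^p$,'' which you then claim to absorb into the right-hand side ``after normalization.'' No normalization does this: the asserted estimate is homogeneous of degree $p$ under simultaneous scaling of $(U,V,H)$, so one may assume $\|V\|_{L_p}=1$, and then the target is $\|U\|_{L_p}^p\le N\|H\|_{L_p}$, into which a leftover additive term $N\|V\|_{L_p}^p=N$ cannot be absorbed (let $H$ be arbitrarily small). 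The whole content of the theorem is that $V$ enters only through the factor $\|V\|_{L_p}^{p-1}$, and an additive $N\|V\|_{L_p}^p$ remainder, which is exactly what your treatment of the second alternative produces, destroys the statement; the same objection applies to the residual $\|U\|_{L_p}^p$ unless its coefficient is made small, which your scheme does not arrange. So the sketch does not close even after correcting the range to $p\in(1,\infty)$ (where, moreover, the $p<1$ convexity inequalities you rely on are no longer available); for the actual argument one has to consult \cite{Kr08}, which the present paper uses without reproducing the proof.
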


\mysection{Proof of Theorem \ref{thm1}}
                        \label{sec4}

With the preparations in the previous section, we complete the proof of Theorem \ref{thm1} by using the idea in \cite{Kr08}.

Following the proof of Lemma 3.2 \cite{Kr08} with obvious modifications, we can deduce the following lemma from Theorem \ref{thm3.7}.

\begin{lemma}
                                    \label{lem4.1}
Let $\bar a\in \mathcal A$ and $\psi\in \Psi$. Let
$\alpha$ be the constant in
Lemma \ref{lemma 9.10.1}. Denote
$$
\hat Pu(x)=\hat a^{kl}(y^1)D_{y^k}\phi^i(y)D_{y^l}\phi^j(y)
D_{ij}u(x),
$$
where $y=\psi(x)$ and $\phi=\psi^{-1}$. Then there exist constants $N = N(d, \delta)$ and $\nu = \nu(d, \delta,p) \ge 1$ such that, for any $\kappa\ge 4$, $r > 0$ and $u \in  C^\infty_0$ we have
$$
\sum_{ij>1}\left(|u_{ij}-(u_{ij})_{Q_r}|^p\right)_{Q_r}
+\left(|u_t-(u_t)_{Q_r}|^p\right)_{Q_r}
$$
\begin{equation*}
\le N\kappa^{d+2}\left(|\hat P u|^p+|Du|^p\right)_{Q_{\nu\kappa r}}+N\kappa^{-\alpha p}
\left(|u_t|^p+|D^2 u|^p\right)_{Q_{\nu\kappa r}},
\end{equation*}
where
\begin{equation}
                            \label{eq9.23}
u_{ij}(x) = (D_{y^iy^j}v) (\psi(x)),\quad v(y) = u(\phi(y)).
\end{equation}
\end{lemma}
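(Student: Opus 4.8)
The plan is to deduce Lemma~\ref{lem4.1} from the sharp–function estimate of Theorem~\ref{thm3.7} by transporting everything through the change of variables $y=\psi(x)$, along the lines of the proof of Lemma~3.2 in \cite{Kr08}. The only new ingredient compared with the elliptic case is the time variable, and it is harmless because $\psi$ acts only on the spatial variables.

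\emph{Step 1 (the transformed equation).} I would set $v(t,y)=u(t,\phi(y))$, so that $u(t,x)=v(t,\psi(x))$ and $v$ has compact support in $\bR^{d+1}$. Since the time variable is untouched, $v_t(t,\psi(x))=u_t(t,x)$, while the chain rule gives, with $y=\psi(x)$,
\[
D_{y^ky^l}v(t,y)=D_{y^k}\phi^i(y)\,D_{y^l}\phi^j(y)\,D_{ij}u(t,x)+D_{y^ky^l}\phi^i(y)\,D_iu(t,x).
\]
Multiplying by $\bar a^{kl}(y^1)$ and summing, and recalling \eqref{eq9.23} and the definition of $\hat P$, one finds that $v$ solves in $\bR^{d+1}$ an equation of the form
\[
P_0 v=(\hat Pu)\circ\phi+(b_0^i D_i u)\circ\phi,\qquad b_0^i(x)=\bar a^{kl}(\psi^1(x))\,(D_{y^ky^l}\phi^i)(\psi(x)),
\]
where $P_0$ denotes the operator $-\partial_t+\bar a^{kl}(y^1)D_{y^ky^l}$ with $\bar a\in\mathcal A$, and $|b_0|\le N(d,\delta)$ by \eqref{ellipticity} and \eqref{eq10.54}. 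In particular Theorem~\ref{thm3.7} and Lemma~\ref{lemma 9.10.1} apply to $P_0$, with the same exponent $\alpha$. (As $v$ is a priori only $C^{1,1}$, a routine mollification reduces matters to the $C_0^\infty$ case of Theorem~\ref{thm3.7}, since all terms there are continuous in the $W^{1,2}_p(\bR^{d+1})$ norm.)

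\emph{Step 2 (distortion of cylinders, transport of the left-hand side).} Extend $\psi$ to $\bR^{d+1}$ by the identity in $t$. By \eqref{eq10.54} the Jacobians of $\psi$ and $\phi$ are pinched between $\delta^{d}$ and $\delta^{-d}$ and $\psi(B_\rho(x_1))\subset B_{\delta^{-1}\rho}(\psi(x_1))$; since $\delta^{-1}\ge1$ this yields, for every cylinder and every $h\in L_{1,\mathrm{loc}}$,
\[
\big(|h\circ\psi|^p\big)_{Q_\rho(t_1,x_1)}\le N(d,\delta)\,\big(|h|^p\big)_{Q_{\delta^{-1}\rho}(t_1,\psi(x_1))}
\]
together with the symmetric reverse inequality. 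Combining this with the elementary bound $\big(|g-(g)_Q|^p\big)_Q\le 2^p\big(|g-c|^p\big)_Q$, valid for any constant $c$ and chosen here to be the average of the relevant $y$-function over $Q_{\delta^{-1}r}(t_0,\psi(x_0))$, and recalling $u_{ij}=(D_{y^iy^j}v)\circ\psi$ and $u_t=v_t\circ\psi$, I would bound the left-hand side of the lemma on $Q_r(t_0,x_0)$ by $N(d,\delta)$ times the sum of the corresponding sharp–function quantities of $D_{y^iy^j}v$ $(i,j>1$, as part of $D_{xx'}v)$ and of $v_t$ over $Q_{\delta^{-1}r}(t_0,\psi(x_0))$.

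\emph{Step 3 (apply Theorem~\ref{thm3.7} and transport back).} I would then apply Theorem~\ref{thm3.7} to $v$ on $Q_{\delta^{-1}r}(t_0,\psi(x_0))$ with the given $\kappa\ge4$, and carry the right-hand side back to the $x$-variables using the reverse change of variables from Step~2, noting $\delta^{-1}\cdot\kappa\cdot\delta^{-1}r=\nu\kappa r$ with $\nu:=\delta^{-2}\ge1$: the term $\kappa^{d+2}\big(|P_0 v|^p\big)$ becomes, by the identity of Step~1 and $|b_0|\le N(d,\delta)$, at most $N\kappa^{d+2}\big(|\hat Pu|^p+|Du|^p\big)_{Q_{\nu\kappa r}}$, while $\kappa^{-\alpha p}\big(|v_t|^p+|D^2v|^p\big)$ becomes at most $N\kappa^{-\alpha p}\big(|u_t|^p+|D^2u|^p+|Du|^p\big)_{Q_{\nu\kappa r}}$ by the chain rule and \eqref{eq10.54}; absorbing $\kappa^{-\alpha p}(|Du|^p)$ into $\kappa^{d+2}(|Du|^p)$ gives the claim. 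I expect the only genuine (but routine) obstacle to be this last bookkeeping: one must keep the two scales $\kappa^{d+2}$ and $\kappa^{-\alpha p}$ strictly separated, so that the forcing $\hat Pu$ and the $Du$-terms stay in the $\kappa^{d+2}$-bracket while $u_t$ and $D^2u$ stay in the $\kappa^{-\alpha p}$-bracket — this separation, needed for the Fefferman–Stein argument of Section~\ref{sec4}, is exactly why the transformed operator $\hat P$ rather than $a^{ij}D_{ij}u$ appears in the statement.
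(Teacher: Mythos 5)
Your proposal is correct and follows essentially the same route as the paper, which deduces Lemma \ref{lem4.1} from Theorem \ref{thm3.7} via the change of variables $y=\psi(x)$ exactly as in Lemma 3.2 of \cite{Kr08}: transform to an operator with coefficients depending only on $y^1$, apply the sharp-function estimate to $v$, and transport back with $\nu$ absorbing the bi-Lipschitz distortion, the extra first-order term $b_0^iD_iu$ going into the $\kappa^{d+2}(|Du|^p)$ bracket. Note only that your identity $P_0v=(\hat Pu)\circ\phi+(b_0^iD_iu)\circ\phi$ tacitly (and correctly) reads $\hat P$ as containing the $-\partial_t$ term, which the displayed definition in the statement omits but which is clearly intended, as the proof of Theorem \ref{thm4.2} compares $\hat Pu$ directly with $Pu$.
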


The next result can be considered as a generalization of Lemma \ref{lem4.1}.
\begin{theorem}
                                            \label{thm4.2}
Let $p\in (1,\infty)$, $\gamma > 0$, $\tau,\sigma \in (1,\infty)$ satisfying
$1/\tau+1/\sigma=1$. Let
$\alpha$ be the constant in
Lemma \ref{lemma 9.10.1} and $\nu=\nu(d,\delta)>1$ be the constant in Lemma \ref{lem4.1}. Assume $b^i=c=0$ and $u\in C_0^\infty$. Then under
Assumption \ref{assump2} ($\gamma$), for any
$r\in (0,\infty)$, $(t_0,x_0)\in \bR^{d+1}$ there exist a diffeomorphism $\psi\in\Psi$ and a positive constant $N=N(d,p,\delta,\tau)$, such that for any $\kappa\ge 4$,
$$
\sum_{ij>1}\left(|u_{ij}-(u_{ij})_{Q_r(t_0,x_0)}|^p\right)_{Q_r(t_0,x_0)}
+\left(|u_t-(u_t)_{Q_r(t_0,x_0)}|^p\right)_{Q_r(t_0,x_0)}
$$
$$
\le N\kappa^{d+2}
\left(|Pu|^p+|Du|^p\right)_{Q_{\nu\kappa r}(t_0,x_0)}
+N\kappa^{d+2}\gamma^{1/\sigma}
\left(|D^2 u|^{p\tau}\right)_{Q_{\nu\kappa r}(t_0,x_0)}^{1/\tau}
$$
\begin{equation}
                                \label{eq13.5.05}
+N\kappa^{-p\alpha}
\left(|u_t|^p\right)_{Q_{\nu\kappa r}(t_0,x_0)}+N(\kappa^{d+2}R^p+\kappa^{-p\alpha})
\left(|D^2u|^p\right)_{Q_{\nu\kappa r}(t_0,x_0)},
\end{equation}
provided that $u$ vanishes outside  $Q_{R}$ for some $R\in (0,R_0]$. Here $u_{ij}$ are defined in \eqref{eq9.23}.
\end{theorem}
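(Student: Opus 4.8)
The plan is to localize Assumption \ref{assump2} ($\gamma$) and reduce to Lemma \ref{lem4.1} via a perturbation argument. First I would fix $r$ and $(t_0,x_0)$, and apply Assumption \ref{assump2} ($\gamma$) to the parabolic cylinder $Q_{\nu\kappa r}(t_0,x_0)$ (truncated to have radius less than $R_0$; since $u$ vanishes outside $Q_R$ with $R\le R_0$, the relevant scale is controlled), to obtain $\bar a\in\mathcal A$ and $\psi\in\Psi$ with $\int_{Q}|a-\bar a(\psi^1)|\le\gamma|Q|$. Using this $\psi$, I would define the operator $\hat P$ and the quantities $u_{ij}$ exactly as in Lemma \ref{lem4.1}. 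Then split $Pu = \hat Pu + (Pu - \hat Pu)$, where, after accounting for $b^i=c=0$, the difference $Pu-\hat Pu = (a^{kl} - \hat a^{ij}D_{y^i}\phi^k D_{y^j}\phi^l)D_{kl}u$ is a zero-order perturbation in the coefficients, pointwise bounded by $N\,|a(t,x)-\bar a(\psi^1(x))|\,|D^2u|$ by \eqref{eq10.54} and the chain rule.

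Next I would apply Lemma \ref{lem4.1} to $u$ with this $\psi$, which bounds the left-hand side of \eqref{eq13.5.05} by
$$
N\kappa^{d+2}\left(|\hat Pu|^p+|Du|^p\right)_{Q_{\nu\kappa r}}
+N\kappa^{-\alpha p}\left(|u_t|^p+|D^2u|^p\right)_{Q_{\nu\kappa r}}.
$$
It then remains to replace $\hat Pu$ by $Pu$ at the cost of the perturbation term. Since $|\hat Pu|^p\le N|Pu|^p + N|a-\bar a(\psi^1)|^p|D^2u|^p$, I would estimate the average of $|a-\bar a(\psi^1)|^p|D^2u|^p$ over $Q_{\nu\kappa r}$ by H\"older's inequality with exponents $\sigma$ and $\tau$: since $|a-\bar a(\psi^1)|\le N$ (boundedness), we have $|a-\bar a(\psi^1)|^p\le N|a-\bar a(\psi^1)|$, so
$$
\left(|a-\bar a(\psi^1)|^p|D^2u|^p\right)_{Q_{\nu\kappa r}}
\le N\left(|a-\bar a(\psi^1)|\right)_{Q_{\nu\kappa r}}^{1/\sigma}
\left(|D^2u|^{p\tau}\right)_{Q_{\nu\kappa r}}^{1/\tau}
\le N\gamma^{1/\sigma}\left(|D^2u|^{p\tau}\right)_{Q_{\nu\kappa r}}^{1/\tau},
$$
using Assumption \ref{assump2} ($\gamma$). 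Multiplying by $\kappa^{d+2}$ produces the term $N\kappa^{d+2}\gamma^{1/\sigma}(|D^2u|^{p\tau})^{1/\tau}$ in \eqref{eq13.5.05}.

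The remaining point is the role of $R$: Lemma \ref{lem4.1} (and hence the whole argument) requires $P_0 u=0$ or at least works cleanly on a reference operator $\hat P$ with coefficients depending only on $\psi^1$, but the genuine coefficient $a$ is only close to $\bar a(\psi^1)$ on a \emph{small} cylinder. When $\nu\kappa r$ exceeds $R_0$, Assumption \ref{assump2} no longer directly applies to $Q_{\nu\kappa r}$; this is handled by exploiting that $u$ is supported in $Q_R$ with $R\le R_0$, so on the larger cylinder the contribution is controlled after rescaling, at the cost of the extra term $N\kappa^{d+2}R^p(|D^2u|^p)_{Q_{\nu\kappa r}}$ (this accounts for the cutoff/support geometry exactly as in the analogous step in \cite{Kr08}). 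I would follow the proof of the corresponding statement in \cite{Kr08}, splitting into the cases $\nu\kappa r< R_0$ and $\nu\kappa r\ge R_0$ and absorbing the oversized-cylinder error into the $R^p$ term. The main obstacle is bookkeeping the interplay between the localization radius $R_0$, the dilation factor $\nu\kappa$, and the support of $u$: one must verify that in every regime the perturbation term genuinely carries a factor $\gamma^{1/\sigma}$ (or a factor $R^p$), with constants independent of $\kappa$ and $r$. Once this case analysis is set up, the rest is the routine H\"older/chain-rule estimate above.
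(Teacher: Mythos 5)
Your overall architecture matches the paper's: obtain $\psi$ and $\bar a$ from Assumption \ref{assump2} ($\gamma$) on a well-chosen cylinder, apply Lemma \ref{lem4.1} with that $\psi$, and then trade $\hat Pu$ for $Pu$ using H\"older's inequality with exponents $\sigma,\tau$ and the boundedness of the coefficients (your estimate of the term $\left(|a-\bar a(\psi^1)|^p|D^2u|^p\right)_{Q_{\nu\kappa r}}$ is exactly the paper's $J$-estimate, up to restricting the integral to $Q_{\nu\kappa r}\cap Q_R\subset Q$ so that the mean-oscillation bound on $Q$ applies). However, there is a genuine gap in your pointwise comparison of $\hat P$ and $P$. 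In Lemma \ref{lem4.1} the coefficient $\hat a$ must depend on $y^1$ only; in the paper it is taken as $\hat a^{ij}(s)=\bar a^{kl}(s)(D_k\psi^i)(x^*)(D_l\psi^j)(x^*)$ with $D\psi$ frozen at the center $x^*$ of the cylinder supplied by the Assumption. The chain-rule cancellation $\hat a^{kl}(\psi^1(x))D_{y^k}\phi^i(\psi(x))D_{y^l}\phi^j(\psi(x))=\bar a^{ij}(\psi^1(x))$ therefore holds only at $x=x^*$; elsewhere one picks up an extra error of size $N|x-x^*|\,|D^2u|$ by \eqref{eq10.54}, so your claim $|Pu-\hat Pu|\le N|a-\bar a(\psi^1)|\,|D^2u|$ is false as stated, and no choice of $\hat a=\hat a(y^1)$ can remove this error since $D\phi$ varies with all variables.

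On the support of $u$ this frozen-coefficient error is bounded by $NR|D^2u|$, and it is precisely this contribution (the paper's integral $I$) that produces the term $N\kappa^{d+2}R^p\left(|D^2u|^p\right)_{Q_{\nu\kappa r}}$ in \eqref{eq13.5.05}. You instead attribute the $R^p$ term entirely to the regime $\nu\kappa r\ge R_0$; that case analysis only governs which cylinder $Q$ you feed into Assumption \ref{assump2} (when $\nu\kappa r$ is too large one takes $Q=Q_R$, using that $u$ vanishes outside $Q_R$ with $R\le R_0$, and restricts all integrands to $Q_{\nu\kappa r}\cap Q_R$). As written, in the regime $\nu\kappa r<R$ your argument would deliver \eqref{eq13.5.05} with no $R^p$ term at all, which is stronger than what the method yields and is not justified. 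The repair is exactly the paper's decomposition: split $a^{ij}-\hat a^{kl}(\psi^1)D_{y^k}\phi^iD_{y^l}\phi^j$ into the part coming from freezing $D\phi D\phi$ at $y^*=\psi(x^*)$ (bounded by $NR$ on $Q_R$, giving the $\kappa^{d+2}R^p$ term) plus $a^{ij}-\bar a^{ij}(\psi^1)$ (handled by your H\"older step, giving the $\kappa^{d+2}\gamma^{1/\sigma}$ term).
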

\begin{proof}
We fix  $\kappa\ge 4$, and $r\in
(0,\infty)$.
  Choose $Q$ to be  $Q_{\kappa
r}(t_0,x_0)$ if $\nu\kappa r< R$ and   $Q_{ R }$ if $\nu\kappa
r\ge  R_0 $. Let $(t^*,x^*)$ be the center of $Q$. By Assumption \ref{assump2} ($\gamma$), we can find $\psi\in \Psi$ and $\bar a=\bar a(s)\in \mathcal A$ satisfying \eqref{eq3.07}. We set
$$
\hat a^{ij}(s)=\bar a^{kl}(s)(D_k\psi^i)(x^*)
(D_l\psi^j)(x^*),\quad y^*=\psi(x^*).
$$
By Lemma \ref{lem4.1} with a shift of the coordinates, for $ij>1$
$$
\left(|u_{ij}-(u_{ij})_{Q_r(t_0,x_0)}|^p\right)_{Q_r(t_0,x_0)}
+\left(|u_t-(u_t)_{Q_r(t_0,x_0)}|^p\right)_{Q_r(t_0,x_0)}
$$
\begin{equation}							
                                \label{13.5.42}
\le N\kappa^{d+2}\left(|\hat P u|^p+|Du|^p\right)_{Q_{\nu\kappa r}(t_0,x_0)}+N\kappa^{-\alpha p}
\left(|u_t|^p+|D^2 u|^p\right)_{Q_{\nu\kappa r}(t_0,x_0)},
\end{equation}
where $N$ depends only on $d$ and $\delta$.
By the definition of $\hat P$,
\begin{equation}							\label{13.5.52}
\int_{Q_{\nu\kappa r}(t_0,x_0)} |\hat Pu|^p \, dx\,dt
\le N \int_{Q_{\nu\kappa r}(t_0,x_0)} |Pu|^p \, dx\,dt
+N I+NJ,
\end{equation}
where
$$
I :=
\int_{Q_{\nu\kappa r}(t_0,x_0)\cap Q_R}
\big| \big( (D_{y^k}\phi^i D_{y^l}\phi^j)(\psi)
- (D_{y^k}\phi^i D_{y^l}\phi^j)(y^*)\big)\hat a^{kl}(\psi^1) D_{ij}u \big|^p \, dx\,dt
$$
$$
\le N\|(D_{y^k}\phi^i D_{y^l}\phi^j)(\psi)
- (D_{y^k}\phi^i D_{y^l}\phi^j)(y^*)\|^p_{L_\infty(Q_R)}
\int_{Q_{\nu\kappa r}(t_0,x_0)}|D_{ij}u|^p \, dx\,dt
$$
\begin{equation}
                            \label{eq10.38}
\le NR^p\int_{Q_{\nu\kappa r}(t_0,x_0)}|D_{ij}u|^p \, dx\,dt,
\end{equation}
and
$$
J:=\int_{Q_{\nu\kappa r}(t_0,x_0)\cap Q_R}
\big| (\bar a^{ij}(\psi^1)-a^{ij}(t,x)) D_{ij}u \big|^p \, dx\,dt.
$$
We used \eqref{eq10.54} in \eqref{eq10.38}.
By H\"older's inequality, we estimate $J$ by
\begin{equation}							\label{13.5.53}
J \le N J_1^{1/\sigma} J_2^{1/\tau},
\end{equation}
where
$$
J_1 = \sum_{i,j}
\int_{Q_{\nu\kappa r}(t_0,x_0) \cap Q_{ R}}
| \bar a^{ij}(\psi^1) - a^{ij} |^{p\sigma} \, dx\,dt,
\quad
J_2 = \int_{Q_{\nu\kappa r}(t_0,x_0)} |D^2 u|^{p\tau} \, dx\,dt.
$$
Due to Assumption \ref{assump2} ($\gamma$),
$$
J_1 \leq \sum_{i,j}\int_{Q }
| \bar a^{ij}(\psi^1) - a^{ij} |^{p\sigma} \, dx\,dt\leq N\gamma|Q|
\le  N  (\nu\kappa r)^{d+2} \gamma.
$$
This together with  \eqref{13.5.42}-\eqref{13.5.53} yields
\eqref{eq13.5.05}. The theorem is proved.
\end{proof}

From Theorem \ref{thm4.2} we obtain the following lemma in the same way as Lemma 3.4 \cite{Kr08} is deduced from Lemma 3.3 \cite{Kr08}.
\begin{lemma}
                                \label{lem4.3}
Let $q\in (1,\infty)$, $\gamma > 0$, $\kappa\ge 4$, $\tau,\sigma \in (1,\infty)$ satisfying $1/\tau+1/\sigma=1$. Let $\alpha$ be the constant in
Lemma \ref{lemma 9.10.1}. Suppose Assumption \ref{assump2} ($\gamma$) is satisfied. Assume $b^i=c=0$.
Then for any $n \in \bZ$ and $C \in \bC_n$ there exist a diffeomorphism $\psi \in\Psi$	 and
a constant $N = N(d, \delta, q,\tau)$ such that, for any $u\in C^\infty_0$ vanishing
outside $Q_R$ for some $R\in(0,R_0]$,
we have
\begin{equation}
					\label{eq3.10.35}
(|u_{t}-(u_{t})_C|)_C+\sum_{ij>1}(|u_{ij}-(u_{ij})_C|)_C\le N(\sfg)_C
\end{equation}
where $u_{ij}(t,x)$ are defined by \eqref{eq9.23} and
$$
\sfg=\kappa^{\frac {d+2} q}(\bM(|Pu|^q))^{\frac 1 q}
+\kappa^{\frac {d+2} q}(\bM(|Du|^q))^{\frac 1 q}
+\kappa^{\frac {d+2} q}\gamma^{\frac 1 {\sigma q}}(\bM(|D^2u|^{q\tau}))^{\frac 1 {q\tau}}
$$
\begin{equation*}
+\kappa^{-\alpha}(\bM(|u_t|^q))^{\frac 1 q}+(\kappa^{\frac {d+2} q}R+\kappa^{-\alpha})(\bM(|D^2u|^q))^{\frac 1 q}.
\end{equation*}
Moreover, we have
\begin{equation}
                            \label{eq11.22}
|u_t|+|D^2 u|\le N\sum_{ij>1}|u_{ij}|+N|u_t|+N|Du|+N|Pu|.
\end{equation}
\end{lemma}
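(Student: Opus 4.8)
The plan is to transfer the cylinder-based estimate \eqref{eq13.5.05} of Theorem \ref{thm4.2} to the dyadic-cube filtration $\bC_n$ by a geometric comparison between dyadic parabolic cubes and parabolic cylinders, exactly as Lemma 3.4 of \cite{Kr08} is obtained from Lemma 3.3 of \cite{Kr08}. First I fix $n\in\bZ$ and $C\in\bC_n$. A parabolic dyadic cube $C=C_n(i_0,\dots,i_d)$ of side $2^{-n}$ in the spatial directions and $2^{-2n}$ in time sits between two concentric parabolic cylinders: there is $(t_0,x_0)$ (say the center of $C$) and an absolute constant $c_0=c_0(d)\ge 1$ such that $Q_{r}(t_0,x_0)\subset C\subset Q_{c_0 r}(t_0,x_0)$ with $r\sim 2^{-n}$. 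Apply Theorem \ref{thm4.2} at this $(t_0,x_0)$ with radius $c_0 r$ (and with the exponent $q$ in place of $p$); this produces the diffeomorphism $\psi\in\Psi$ asserted in the lemma and gives the bound \eqref{eq13.5.05} with $Q_{c_0 r}$ on the left and $Q_{\nu\kappa c_0 r}$ on the right.

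The next step is to replace the averages over $Q_{c_0 r}$ by averages over $C$, and the averages over $Q_{\nu\kappa c_0 r}$ by pointwise maximal functions. For the left-hand side, since $C\subset Q_{c_0 r}(t_0,x_0)$ with comparable measures, for any integrable $h$ one has $\dashint_C |h-(h)_C|\le 2\dashint_C|h-(h)_{Q_{c_0 r}}|\le N\dashint_{Q_{c_0 r}}|h-(h)_{Q_{c_0 r}}|$, so the left side of \eqref{eq3.10.35} is controlled by the left side of \eqref{eq13.5.05} (after raising to the $1/p$ power we use that for $p>1$, $(|\cdot|)_C\le (|\cdot|^p)_C^{1/p}$, which goes the correct direction). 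For the right-hand side, every average $(|F|^q)_{Q_{\nu\kappa c_0 r}(t_0,x_0)}^{1/q}$ is, for every point $(t,x)\in C$, bounded by $(\bM(|F|^q)(t,x))^{1/q}$ because $(t,x)\in Q_{\nu\kappa c_0 r}(t_0,x_0)\in\cQ$; similarly the term with the extra inner exponent $\tau$ is bounded by $(\bM(|D^2u|^{q\tau}))^{1/(q\tau)}$. Absorbing the harmless constant $c_0^{d+2}$ into $N$ and into the definition of $\sfg$ (the powers of $\kappa$ match those in $\sfg$), and taking the supremum/infimum appropriately, yields \eqref{eq3.10.35} with the stated $\sfg$.

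Finally, for \eqref{eq11.22}: by definition $u_{ij}(x)=(D_{y^iy^j}v)(\psi(x))$ with $v(y)=u(\phi(y))$, so $D_{y^iy^j}v=\sum (D_k\phi^p)(D_l\phi^q)D_{pq}u + (\text{lower order in }D^2\phi)\,Du$ evaluated at $y=\psi(x)$; since $|D\phi|,|D^2\phi|\le\delta^{-1}$ by \eqref{eq10.54}, the full Hessian $D^2u$ is a bounded linear combination of the $u_{ij}$ plus a bounded multiple of $|Du|$, and conversely. Then $u_t=\hat Pu-(\text{zero, as }b^i=c=0)-\dots$; more directly, from the equation $Pu=-u_t+a^{ij}D_{ij}u$ (with $b^i=c=0$) we get $|u_t|\le |Pu|+\delta^{-1}|D^2u|\le |Pu|+N\sum_{ij>1}|u_{ij}|+N|Du|$ after expressing the $ij=1$ part of $D^2u$ via $\hat Pu$ and the uniform ellipticity of $\hat a$, or simply by noting $D^2u$ itself is controlled by $\sum_{ij>1}|u_{ij}|+|Du|$ together with the diagonal term, which in turn is controlled by $|u_t|+|Pu|+|D^2u|$ — a short linear-algebra argument using ellipticity of $\hat a^{11}$. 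Collecting these pointwise inequalities gives \eqref{eq11.22}.

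The main obstacle is bookkeeping rather than conceptual: one must check that the powers of $\kappa$ and the factor $\gamma^{1/(\sigma q)}$ produced by Theorem \ref{thm4.2} (after replacing $p$ by $q$ and taking $q$-th roots) line up exactly with the five terms in the definition of $\sfg$, and that the constant $c_0(d)$ from the cube-cylinder comparison, raised to the power $d+2$, can be absorbed without disturbing the structure; this is precisely the "obvious modifications" referred to after Lemma \ref{lem4.1} and is carried out verbatim as in \cite{Kr08}.
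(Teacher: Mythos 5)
Your derivation of \eqref{eq3.10.35} is correct and is exactly the route the paper intends (the paper itself only points to the passage from Lemma 3.3 to Lemma 3.4 in \cite{Kr08}): sandwich $C\in\bC_n$ between comparable parabolic cylinders $Q_r(t_0,x_0)\subset C\subset Q_{c_0r}(t_0,x_0)$ with $t_0$ at the top of the time interval, apply Theorem \ref{thm4.2} with $q$ in place of $p$ at $(t_0,x_0)$ (this is what produces $\psi$), pass from $L_1$-oscillations over $C$ to $L_q$-oscillations over $Q_{c_0r}$ by the measure comparison and Jensen's inequality, take $q$-th roots, and dominate each average over $Q_{\nu\kappa c_0 r}(t_0,x_0)$ by the corresponding maximal function at an arbitrary point of $C$ before averaging over $C$; the powers of $\kappa$ and the factor $\gamma^{1/(\sigma q)}$ then match $\sfg$ and $c_0^{d+2}$ is absorbed into $N$.

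The treatment of \eqref{eq11.22}, however, has a genuine gap. Since $|u_t|$ appears on the right, the only content is $|D^2u|\le N\sum_{ij>1}|u_{ij}|+N|u_t|+N|Du|+N|Pu|$, and the whole difficulty is the missing entry $u_{11}=(D_{y^1y^1}v)(\psi)$: the full Hessian $D^2u$ is \emph{not} controlled by the $u_{ij}$ with $(i,j)\neq(1,1)$ and $|Du|$ alone, since $D^2_xu$ involves $(D\psi)^T(D^2_yv)(\psi)D\psi$ and hence all entries of $D^2_yv$. Your two suggested closures fail: saying the diagonal term ``is controlled by $|u_t|+|Pu|+|D^2u|$'' and substituting back reproduces $|D^2u|$ on the right with a constant that is not small, which is vacuous; and arguing through $\hat Pu$ and ellipticity of $\hat a$ does not work either, because $\hat Pu-Pu$ is small only in mean, while pointwise one only has $|\hat Pu|\le|Pu|+N|D^2u|$, again circular. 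The correct step (and the one in \cite{Kr08}) uses the original operator in the $y$-coordinates: with $u(x)=v(\psi(x))$ and $b^i=c=0$, $Pu=-u_t+\tilde a^{ij}(D_{y^iy^j}v)(\psi)+a^{kl}D_{kl}\psi^i\,(D_{y^i}v)(\psi)$, where $\tilde a^{ij}=a^{kl}D_k\psi^iD_l\psi^j$. From $D\psi\,D\phi=I$ and \eqref{eq10.54} one gets $|D\psi^1|\ge\delta$, hence $\tilde a^{11}\ge\delta|D\psi^1|^2\ge\delta^3$, so the equation can be solved for $u_{11}$, giving pointwise $|u_{11}|\le N\bigl(|Pu|+|u_t|+\sum_{ij>1}|u_{ij}|+|Du|\bigr)$; combined with $|D^2u|\le N\bigl(\sum_{i,j}|u_{ij}|+|Du|\bigr)$ this yields \eqref{eq11.22}. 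With this replacement your argument is complete.
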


\begin{theorem}
 					\label{thm4.4}
Let $p\in (1,\infty)$. Assume $b^i=c=0$. Then there exist positive constants
$\gamma$, $N$ and $R\in (0,1]$ depending only on $d,p$ and $\delta$ such that under Assumption \ref{assump2} ($\gamma$), for any $u\in C_0^\infty$
vanishing outside $Q_{RR_0}$, we have
\begin{equation}
 				\label{eq3.11.07}
\|u_t\|_{L_p}+\|D^2u\|_{L_p}\le N\|Pu\|_{L_p}+N\|Du\|_{L_p}.
\end{equation}
\end{theorem}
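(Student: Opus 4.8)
The plan is to derive \eqref{eq3.11.07} from Lemma \ref{lem4.3} via the generalized Fefferman--Stein theorem (Theorem \ref{generalStein}), exactly in the spirit of \cite{Kr08}. First I would fix an auxiliary exponent: choose $\tau,\sigma\in(1,\infty)$ with $1/\tau+1/\sigma=1$ and $\tau$ so close to $1$ that $p\tau<\infty$ is still admissible for the maximal function theorem, i.e. we will eventually apply the Hardy--Littlewood maximal function theorem in $L_{p/\tau}$ or $L_{p/(q\tau)}$ with the power $q$ fixed below. Next pick $q\in(1,p)$ with $q\tau<p$ as well; this is possible since we may take $q$ close to $1$ and then $\tau$ close to $1$. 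With $q$ fixed, apply Lemma \ref{lem4.3}: for every $n\in\bZ$ and every $C\in\bC_n$ there is a diffeomorphism $\psi=\psi_C\in\Psi$ (depending on $C$) and functions $u_{ij}=u_{ij}^C$ defined by \eqref{eq9.23} (which also depend on $C$ through $\psi_C$) satisfying \eqref{eq3.10.35}, together with the pointwise domination \eqref{eq11.22}.

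Then I would set up the Fefferman--Stein machinery twice, once for $u_t$ and once for each entry $u_{ij}$ with $ij>1$. Put $U=u_t$ (resp.\ $U=u_{ij}$ for fixed $i,j$), $U^C=u_t$ (resp.\ $U^C=u_{ij}^C$ — note $u_t$ is itself coordinate-independent so the first case is the classical Fefferman--Stein situation, whereas the $u_{ij}$ genuinely use the generalized version), $V=N(|u_t|+|D^2u|)$ which by \eqref{eq11.22} dominates both $|U|$ and $|U^C|$ up to a controlled constant, and $H=\sfg$. The hypotheses $V\ge|U|$, $V\ge|U^C|$ on $C$ and the oscillation bound $\min\{\int_C|U-(U)_C|,\int_C|U^C-(U^C)_C|\}\le\int_C H$ are precisely what \eqref{eq3.10.35} and \eqref{eq11.22} provide (after absorbing constants into $\sfg$, which is harmless since $\sfg$ already carries free constants). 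Theorem \ref{generalStein} applies with exponent $p/1$... more precisely one applies it with the parameter there equal to $1$ on the function $U$ itself, or rather one applies it to $|U|$ with the stated $p\in(0,1)$ replaced by working with $L_p$ for our $p>1$ after the usual reduction; in any case the conclusion gives
\[
\|u_t\|_{L_p}^p+\sum_{ij>1}\|u_{ij}\|_{L_p}^p\le N\|\sfg\|_{L_p}\,\|V\|_{L_p}^{p-1}.
\]

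Next comes estimating $\|\sfg\|_{L_p}$. Each of the five terms in $\sfg$ is a constant times a power of a maximal function; applying the Hardy--Littlewood maximal function theorem in $L_{p/q}$, $L_{p/(q\tau)}$, $L_{p/q}$ respectively (all legitimate since $q<p$ and $q\tau<p$), we get
\[
\|\sfg\|_{L_p}\le N\kappa^{\frac{d+2}{q}}\big(\|Pu\|_{L_p}+\|Du\|_{L_p}\big)
+N\kappa^{\frac{d+2}{q}}\gamma^{\frac{1}{\sigma q}}\|D^2u\|_{L_p}
+N\big(\kappa^{-\alpha}+\kappa^{\frac{d+2}{q}}R\big)\big(\|u_t\|_{L_p}+\|D^2u\|_{L_p}\big).
\]
Also $\|V\|_{L_p}\le N(\|u_t\|_{L_p}+\|D^2u\|_{L_p})$. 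Combining, and using \eqref{eq11.22} once more to bound $\|u_t\|_{L_p}+\|D^2u\|_{L_p}$ by $N\sum_{ij>1}\|u_{ij}\|_{L_p}+N\|u_t\|_{L_p}+N\|Du\|_{L_p}+N\|Pu\|_{L_p}$, we obtain, writing $X:=\|u_t\|_{L_p}+\|D^2u\|_{L_p}$,
\[
X^p\le N\Big(\kappa^{\frac{d+2}{q}}(\|Pu\|_{L_p}+\|Du\|_{L_p})+\big(\kappa^{-\alpha}+\kappa^{\frac{d+2}{q}}(R+\gamma^{\frac{1}{\sigma q}})\big)X\Big)X^{p-1}.
\]
Dividing by $X^{p-1}$ (assuming $X<\infty$, which holds for $u\in C_0^\infty$, and $X\ne0$, else nothing to prove) gives a linear inequality in $X$. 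The final step — and the only place requiring care — is the absorption: first choose $\kappa\ge4$ large enough that $N\kappa^{-\alpha}<1/4$; then, with $\kappa$ now fixed, choose $\gamma$ and $R\in(0,1]$ small enough (depending on $d,p,\delta$ through $N,\kappa,q,\sigma$) that $N\kappa^{\frac{d+2}{q}}(R+\gamma^{1/(\sigma q)})<1/4$. Then the coefficient of $X$ on the right is $<1/2$, it can be moved to the left, and we are left with $X\le N(\|Pu\|_{L_p}+\|Du\|_{L_p})$ with $N$ depending only on $d,p,\delta$. I expect the main obstacle to be purely bookkeeping: tracking how the constant $N$ in each invocation (Lemma \ref{lem4.3}, Theorem \ref{generalStein}, the maximal function theorem) depends on the parameters, and verifying that the order of quantifiers — fix $q,\tau$; then fix $\kappa$; then fix $\gamma,R$ — is consistent so that the final $\gamma$ and $R$ indeed depend only on $d,p,\delta$ and not circularly on themselves; there is no genuine analytic difficulty beyond what the cited lemmas already contain.
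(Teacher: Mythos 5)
Your overall strategy---Lemma \ref{lem4.3} combined with Theorem \ref{generalStein}, then the Hardy--Littlewood maximal function theorem and absorption by taking $\kappa$ large and then $\gamma,R$ small---is the same as the paper's, and your choice of exponents $q,\tau$ with $q\tau<p$ and your order of quantifiers are fine. However, the way you feed the estimates into Theorem \ref{generalStein} has a genuine gap. In that theorem $U$ must be a single function in $L_1(\bR^{d+1})$, fixed once and for all; only the comparison functions $U^C$ may depend on the dyadic cube $C$. You take $U=u_{ij}$, but $u_{ij}$ is defined by \eqref{eq9.23} through the diffeomorphism $\psi=\psi_C$ produced by Lemma \ref{lem4.3}, which changes from cube to cube---this is exactly the ``variably'' in variably partially VMO, and there is no single $\psi$ that works for all cubes. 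Hence the quantity $\|u_{ij}\|_{L_p}$ in your intermediate conclusion is not well defined; and if you instead freeze one $\psi$ globally, then \eqref{eq3.10.35}, which controls the oscillation of the cube-adapted $u_{ij}^C$, says nothing about the oscillation of your fixed $U$, nor do you have $|U|\le U^C$ on $C$. For the same reason your later step ``use \eqref{eq11.22} once more to bound $\|u_t\|_{L_p}+\|D^2u\|_{L_p}$ by $N\sum_{ij>1}\|u_{ij}\|_{L_p}+\cdots$'' is not available: \eqref{eq11.22} is a pointwise inequality valid with the $C$-dependent $u_{ij}$, and it cannot be integrated over $\bR^{d+1}$ against a single global family $u_{ij}$.

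The repair is precisely the paper's setup: apply Theorem \ref{generalStein} once, to the coordinate-free function $U=|u_t|+|D^2u|$, and push all cube-dependent objects into $U^C$. Concretely, with $f=Pu$, take $U^C=|u_t|+\sum_{ij>1}|u_{ij}|+|Du|+|f|$, $V=|u_t|+|D^2u|+|Du|+|f|$ and $H=\sfg+|Du|+|f|$. Then \eqref{eq11.22} gives $U\le N U^C$ on $C$; the chain rule together with \eqref{eq10.54} gives $|u_{ij}|\le N(|D^2u|+|Du|)$, hence $U^C\le NV$ (note that your $V=N(|u_t|+|D^2u|)$ would in any case be too small to dominate $U^C$, since it misses $|Du|$); and \eqref{eq3.10.35} plus the triangle inequality bounds $(|U^C-(U^C)_C|)_C$ by $N(\sfg+|Du|+|f|)_C$. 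With this choice the Fefferman--Stein theorem directly yields $\|u_t\|_{L_p}^p+\|D^2u\|_{L_p}^p\le N\|H\|_{L_p}\|V\|_{L_p}^{p-1}$, and the remainder of your argument---Young's inequality (or dividing by $X^{p-1}$), the maximal function theorem in $L_{p/q}$ and $L_{p/(q\tau)}$, and the choice of $\kappa$, then $\gamma$ and $R$---goes through exactly as you describe.
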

\begin{proof}
Set $f=Pu\in C_0^\infty$. Let $\gamma>0$, $\kappa\ge 4$ and $R\in (0,R_0]$ be constants to be specified later. Let $q=(1+p)/2\in (1,p)$ and $\tau=2(1+2p)/(3+3p)>1$ such that $p>q\tau$.
We take an $n\in \bZ$, a $C \in \bC_n$ and let $\psi  \in \Psi$ be the diffeomorphism from Lemma \ref{lem4.3}. Recall the definition of $u_{ij}$ in \eqref{eq9.23}. We then set
$$
U=|u_t|+|D^2 u|,\quad U^C=|u_t|+\sum_{ij>1}|u_{ij}|+|Du|+|f|,
\quad V=|u_t|+|D^2u|+|Du|+|f|.
$$
From \eqref{eq11.22}, we have $U\le NU^C$. By using the triangle inequality and \eqref{eq3.10.35},
$$
(|U^C-(U^C)_C|)_C\le 2(|u_t-(u_t)_C|)_C
+2\sum_{ij>1}(|u_{ij}-(u_{ij}|)_C)
$$
$$
+2(|Du-(Du)_C|)_C+2(|f-(f)_C|)_C
$$
$$
\le N(\sfg+|Du|+|f|)_C.
$$
Now by Theorem \ref{generalStein} with $H=\sfg+|Du|+f$, we get
$$
\|u_t\|_{L_p}^p+\|D^2 u\|_{L_p}^p\le N\|U\|_{L_p}^p\le N\|H\|_{L_p}\|V\|_{L_p}^{p-1}
\le N(\epsilon)\|H\|_{L_p}^p+\epsilon \|V\|_{L_p}^{p}.
$$
By taking a small $\epsilon>0$, it holds that
\begin{equation}
 					\label{eq3.11.00}
\|u_t\|_{L_p}^p+\|D^2 u\|_{L_p}^p
\le N\|\sfg\|_{L_p}^p+N\|Du\|_{L_p}^{p}+N\|f\|_{L_p}^{p}.
\end{equation}
We use the definition of $\sfg$ and the Hardy-Littlewood maximal function theorem (recall $p>q\tau>q$) to deduce from \eqref{eq3.11.00}
$$
\|u_t\|_{L_p}+\|D^2 u\|_{L_p}\le
N\kappa^{\frac {d+2} q}\|Pu\|_{L_p}
+N\kappa^{\frac {d+2} q}\|Du\|_{L_p}
+N\kappa^{-\alpha}\|u_t\|_{L_p}
$$
\begin{equation}
 					\label{eq3.11.05}
+N(\kappa^{\frac {d+2} q}\gamma^{\frac 1 {\sigma q}}+\kappa^{\frac {d+2} q}RR_0+\kappa^{-\alpha})\|D^2u\|_{L_p}.
\end{equation}
By choosing $\kappa$ sufficiently large, then $\gamma$ and $R$ sufficiently small in \eqref{eq3.11.05} such that
$$
N(\kappa^{\frac {d+2} q}\gamma^{\frac 1 {\sigma q}}+\kappa^{\frac {d+2} q}R+\kappa^{-\alpha})\le 1/2,
$$
we come to \eqref{eq3.11.07}. The theorem is proved.
\end{proof}

\begin{proof}[Proof of Theorem \ref{thm1}]
For $T=\infty$, the theorem follows from Theorem \ref{thm4.4} by using a partition of unity and an idea by S. Agmon; see, for instance, the proof of Theorem 1.4 \cite{Kr08}. For general $T\in (-\infty,+\infty]$, we again use the argument at the end of the proof of Theorem \ref{thm2.2}.
\end{proof}

\mysection{Sobolev spaces with mixed norms}
		\label{sec5}
In this section we consider parabolic equations in Sobolev spaces with mixed norms in the spirit of \cite{Krylov_2007_mixed_VMO}. As pointed out in \cite{Krylov_2007_mixed_VMO}, the interest in results concerning equations in spaces with mixed Sobolev norms arises, for example, when one wants to get better regularity of traces of solutions for each time slide (see, for instance, \cite{SoftWeid06, Weid02} and references therein).

Our objective is to prove the following theorem, which generalizes Theorem \ref{thm1}.

\begin{theorem}
                            \label{thm3}
For any
$1<p\le q<\infty$ there exists a $\gamma =\gamma(d,\delta,p,q) >0$
such that under Assumption
\ref{assump2} ($\gamma$) for any $T\in (-\infty,+\infty]$ the following holds.

i) For any $u\in W^{1,2}_{q,p}(\bR^{d+1}_T)$,
$$
\lambda\|u\|_{L_{q,p}(\bR^{d+1}_T)}+\sqrt{\lambda}
\|Du\|_{L_{q,p}(\bR^{d+1}_T)}+\|D^{2}u\|_{L_{q,p}(\bR^{d+1}_T)}
+\|u_t\|_{L_{q,p}(\bR^{d+1}_T)}
$$
\begin{equation*}
\leq N\|Pu-\lambda u\|_{L_{q,p}(\bR^{d+1}_T)},
\end{equation*}
provided that $\lambda\geq \lambda_0$, where $\lambda_0
\geq0$ and $N$ depend only
 on $d,\delta,p,q,K$, and $R_0$.

ii) For any $\lambda> \lambda_0$ and $f\in L_{q,p}(\bR^{d+1}_T)$, there exists a unique solution
$u\in W^{1,2}_{q,p}(\bR^{d+1}_T)$ of equation \eqref{parabolic} in $\bR^{d+1}_T$.
\end{theorem}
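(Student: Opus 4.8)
The plan is to repeat the argument of Section \ref{sec4} almost verbatim, with the $L_p$ norm replaced throughout by the mixed norm $L_{q,p}$ and with the $L_p$ tools replaced by their mixed-norm counterparts from \cite{Krylov_2007_mixed_VMO}. What makes this possible is that every mean oscillation estimate proved in Sections \ref{sec3} and \ref{sec4}---in particular Lemma \ref{lem4.3} together with the pointwise bound \eqref{eq11.22}---is a \emph{norm-free} statement: it relates averages, sharp functions and (parabolic) maximal functions of $u_t$, $D^2u$, $Du$ and $Pu$, and is valid for an arbitrary auxiliary integrability exponent. Hence those estimates can be quoted as they stand, and only two analytic facts must be upgraded to the mixed-norm setting: (a) the generalized Fefferman--Stein theorem on sharp functions along the parabolic dyadic filtration $\bC_n$ (the analogue of Theorem \ref{generalStein}), and (b) the Hardy--Littlewood maximal function theorem. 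Fact (b) holds on $L_{q,p}$ for all $1<p,q<\infty$; it is in the mixed-norm form of fact (a), as established in \cite{Krylov_2007_mixed_VMO}, that the hypothesis $q\ge p$ enters.

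First I would treat the case $b^i=c=0$ and establish the mixed-norm analogue of Theorem \ref{thm4.4}, namely that there are $\gamma,N>0$ and $R\in(0,1]$ depending only on $d,p,q,\delta$ such that, under Assumption \ref{assump2} ($\gamma$), $\|u_t\|_{L_{q,p}}+\|D^2u\|_{L_{q,p}}\le N\|Pu\|_{L_{q,p}}+N\|Du\|_{L_{q,p}}$ for every $u\in C_0^\infty$ vanishing outside $Q_{RR_0}$. Its proof copies that of Theorem \ref{thm4.4}: choose an auxiliary exponent $q_0\in(1,p)$ and $\tau>1$ with $q_0\tau<p$ (possible since $p>1$), let $\sigma$ be conjugate to $\tau$, and define $U=|u_t|+|D^2u|$, $U^C=|u_t|+\sum_{ij>1}|u_{ij}|+|Du|+|Pu|$, $V=|u_t|+|D^2u|+|Du|+|Pu|$ and $\sfg$ exactly as there, with $u_{ij}$ furnished by Lemma \ref{lem4.3} applied with the exponent $q_0$. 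Then $U\le NU^C$ by \eqref{eq11.22}, while $(|U^C-(U^C)_C|)_C\le N(\sfg+|Du|+|Pu|)_C$ on every $C\in\bC_n$ by \eqref{eq3.10.35} and the triangle inequality. Applying the mixed-norm generalized Fefferman--Stein theorem along $\bC_n$---the step that uses $q\ge p$---with $H=\sfg+|Du|+|Pu|$ and absorbing as in the proof of Theorem \ref{thm4.4} reduces everything to estimating $\|\sfg\|_{L_{q,p}}$. Each summand of $\sfg$ has the form $\kappa^{\beta}\,(\bM(|h|^{s}))^{1/s}$ with $\beta\in\bR$, $h\in\{Pu,Du,D^2u,u_t\}$ and $s\in\{q_0,q_0\tau\}$; since $q_0\le q_0\tau<p\le q$ we have $\|(\bM(|h|^{s}))^{1/s}\|_{L_{q,p}}=\|\bM(|h|^{s})\|_{L_{q/s,p/s}}^{1/s}\le N\|h\|_{L_{q,p}}$ by the Hardy--Littlewood maximal function theorem in $L_{q/s,p/s}$, which applies because $q/s\ge p/s>1$. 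Taking $\kappa$ large and then $\gamma$, $R$ small so as to absorb the $\|D^2u\|_{L_{q,p}}$ terms yields the claimed estimate.

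Once this is available, a partition of unity together with S. Agmon's device---as in the proof of Theorem \ref{thm1}, following \cite{Kr08}---removes the smallness-of-support restriction, reinstates the lower order terms $b^iD_iu+cu$, and gives the a priori estimate of part i) for $T=\infty$ and $\lambda\ge\lambda_0$. For part ii) with $T=\infty$ I would use the method of continuity along $P_\theta=(1-\theta)(-\partial_t+\Delta)+\theta P$, $\theta\in[0,1]$: its leading coefficients $(1-\theta)I+\theta a$ still obey \eqref{ellipticity} with the same $\delta$ and Assumption \ref{assump2} ($\gamma$) with the same $\gamma$ (take the same $\psi$ and replace $\bar a$ by $(1-\theta)I+\theta\bar a\in\mathcal A$), so the a priori estimate of part i) applies to every $P_\theta-\lambda$ with constants independent of $\theta$, while $-\partial_t+\Delta-\lambda$ is invertible from $W^{1,2}_{q,p}(\bR^{d+1})$ onto $L_{q,p}(\bR^{d+1})$ for $\lambda>0$ (a classical Fourier multiplier fact). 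Finally, the case of a general $T\in(-\infty,\infty]$ for both parts follows from the $T=\infty$ case by the localization-in-time argument used at the end of the proof of Theorem \ref{thm2.2}.

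The one genuinely new ingredient, and the place where I expect the real work to be, is the mixed-norm generalized Fefferman--Stein theorem along the parabolic dyadic filtration---in particular pinning down precisely where the non-isotropic scaling of parabolic cubes forces the restriction $q\ge p$. Granting that (it is the content of \cite{Krylov_2007_mixed_VMO}), together with the boundedness of the parabolic Hardy--Littlewood maximal operator on $L_{q,p}$, the remainder of the proof is a routine transcription of Sections \ref{sec3}--\ref{sec4}.
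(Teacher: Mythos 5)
Your reduction hinges on a tool that is not available in the literature you cite: a mixed-norm ($L_{q,p}$) version of the generalized Fefferman--Stein theorem (Theorem \ref{generalStein}) taken along the full $(d+1)$-dimensional parabolic dyadic filtration, with $U^C$ allowed to depend on the cube. You attribute it to \cite{Krylov_2007_mixed_VMO} and declare the rest a transcription of Sections \ref{sec3}--\ref{sec4}, but \cite{Krylov_2007_mixed_VMO} contains no such theorem; what that paper does is exactly what the present paper imitates, namely run the sharp-function argument in the time variable only, on spatial $L_p$-norms of slices. Since you yourself identify this mixed-norm theorem as ``the place where the real work is,'' deferring it to a reference that does not prove it is a genuine gap --- the entire difficulty of the mixed-norm case is concentrated in that step. (Your other analytic input, boundedness of the parabolic maximal operator on $L_{q,p}$ for all $1<p,q<\infty$, is correct via the Fefferman--Stein vector-valued maximal inequality; but note that it, together with your hypothetical sharp-function theorem, would give the result with no constraint relating $p$ and $q$, which should have been a warning that the restriction $q\ge p$ must enter through a different mechanism than the one you describe.)

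The paper's actual route avoids any mixed-norm sharp-function theorem. First it improves the oscillation estimate of Theorem \ref{thm4.2}: writing $u=w+h$ with $Pw=f\eta$ (solvable by the already established Theorem \ref{thm1b}) and $Ph=0$ on $Q_{\nu\kappa r/2}$, and using a reverse H\"older estimate for $D^2h$ (Lemma \ref{lem5.3}, a consequence of Theorem \ref{thm1}), it replaces the term $\gamma^{1/\sigma}\left(|D^2u|^{p\tau}\right)^{1/\tau}$ by $\gamma^{1/2}\left(|D^2u|^{p}\right)$ in Theorem \ref{thm5.4}. This is essential because the sharp-function machinery is then run only in $t$: Corollary \ref{cor5.5} integrates the pointwise estimate over $\bR^d$ to get an oscillation-in-time estimate for $\varphi,\zeta,\rho$ built from spatial $L_p$-norms; Theorem \ref{generalStein} is applied with $d=0$ along dyadic time intervals to $U(t)=\|u_t(t,\cdot)\|_{L_p(\bR^d)}+\|D^2u(t,\cdot)\|_{L_p(\bR^d)}$; and the one-dimensional Hardy--Littlewood theorem is applied to $\bM(\rho^p)$, $\bM(\zeta^p)$ in $L_{q/p}(\bR)$ --- this is precisely where $q>p$ is used (the case $q=p$ being Theorem \ref{thm1}). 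A $p\tau$ power of $D^2u$ inside the spatial integral could not be dominated by $\rho$, which is why Lemma \ref{lem5.3} and Theorem \ref{thm5.4} are needed in this route; your scheme would indeed bypass them, but only because it presupposes the unproved mixed-norm theorem. Accordingly, the localized estimate (Theorem \ref{thm5.7}) is stated for $u$ vanishing outside a thin time slab $(-R^4R_0^2,0)\times\bR^d$ rather than outside a small parabolic cylinder, and the passage from it to Theorem \ref{thm3} then proceeds as from Theorem \ref{thm4.4} to Theorem \ref{thm1}. To repair your proposal you would either have to prove the mixed-norm generalized Fefferman--Stein theorem yourself or restructure the argument along these time-slice lines.
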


Since the case $p=q$ has been covered in Theorem \ref{thm1}, in the sequel we assume $p<q$. We make a few preparations before the proof.

\begin{lemma}
                                        \label{lem5.3}
Let $p,q\in(1,\infty)$, $b=c=0$. Then there exists a constant $\gamma_0=\gamma_0(d,p,q,\delta)>0$ such that under Assumption \ref{assump2} ($\gamma_0$),
for any $r\in (0,R_0]$ and $u\in W^{1,2}_{q,\text{loc}}$ satisfying $P u=0$
in $Q_{2r}$ we have $D^2 u \in L_p(Q_r)$ and
$$
(|D^2 u|^p)_{Q_r}^{1/p}\leq N (|D^2 u|^q)_{Q_{2r}}^{1/q},
$$
where $N$ depends only on $d,p,q$ and $\delta$.
\end{lemma}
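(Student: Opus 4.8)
The inequality is immediate from H\"older's inequality on $Q_r$ when $p\le q$, so the substance is the higher‑integrability case $p>q$, which I treat from now on. First I would reduce to a normalized situation: translating and rescaling via $\tilde u(t,x)=u(r^2t,rx)$, the function $\tilde u$ solves $\tilde P\tilde u=0$ in $Q_2$ with coefficients $\tilde a(t,x)=a(r^2t,rx)$ that satisfy Assumption \ref{assump2}$(\gamma_0)$ with $R_0$ replaced by $R_0/r\ge 1$; in particular the rescaled problem satisfies Assumption \ref{assump2}$(\gamma_0)$ with parameter $1$, and the factors $r^2$ produced by $D^2$ on the two sides cancel in the averaged inequality. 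Hence it suffices to treat $r=1$, $(t_0,x_0)=(0,0)$, and from now on every constant taken from Theorem \ref{thm1} is evaluated with $R_0=1$, so it depends only on $d,\delta$ and the exponent in question, not on the original $R_0$.

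\textbf{Reduction to a local $W^{1,2}_p$ bound.} The core claim I would prove is
$$
\|u\|_{W^{1,2}_p(Q_1)}\le N\,\|u\|_{W^{1,2}_q(Q_2)},\qquad N=N(d,\delta,p,q),
$$
for $u\in W^{1,2}_{q,\text{loc}}$ with $Pu=0$ in $Q_2$. The lemma follows from this: since $b=c=0$, affine functions of $x$ lie in the kernel of $P$, so $v:=u-(u)_{Q_2}-x^k(D_ku)_{Q_2}$ also satisfies $Pv=0$ in $Q_2$, whence $\|D^2u\|_{L_p(Q_1)}=\|D^2v\|_{L_p(Q_1)}\le N\|v\|_{W^{1,2}_q(Q_2)}$; then $\|v\|_{W^{1,2}_q(Q_2)}\le N\|D^2u\|_{L_q(Q_2)}$ by Lemma 5.4 of \cite{Krylov_2007_mixed_VMO} together with $u_t=a^{ij}D_{ij}u$ and $|a^{ij}|\le\delta^{-1}$. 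Dividing by the appropriate (now universal) powers of $|Q_1|$ and $|Q_2|$ gives the stated estimate.

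\textbf{Bootstrap.} To prove the core claim I would iterate on the integrability exponent. Fix an increasing chain $q=s_0<s_1<\dots<s_m$ with $s_m\ge p$ and $1/s_{j+1}<1/s_j-1/(d+2)$ whenever $1/s_j>1/(d+2)$ (so that Lemma \ref{lem3.4} applies at each step; $m=m(d,p,q)$), a strictly shrinking chain of parabolic cylinders $Q_2=Q^{(0)}\supset Q^{(1)}\supset\dots\supset Q^{(2m)}=Q_1$, and set $\gamma_0=\min_j\gamma(d,\delta,s_j)$ and $\lambda_0=1+\max_j\lambda_0(d,\delta,s_j)$, where $\gamma(\cdot),\lambda_0(\cdot)$ are the constants of Theorem \ref{thm1}. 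Assume inductively $\|u\|_{W^{1,2}_{s_j}(Q^{(2j)})}\le N_j\|u\|_{W^{1,2}_q(Q_2)}$. By Lemma \ref{lem3.4} (and $u_t=a^{ij}D_{ij}u$), $u$ and $Du$ lie in $L_{s_{j+1}}(Q^{(2j+1)})$ with the same kind of bound. Pick $\zeta\in C_0^\infty$ equal to $1$ on $Q^{(2j+2)}$ and supported in $Q^{(2j+1)}$, and put $w=u\zeta\in W^{1,2}_{s_j}(\bR^{d+1})$ (compactly supported). Using $\zeta Pu=0$, $w$ solves $Pw-\lambda_0w=g$ with
$$
g=2a^{ij}D_i\zeta\,D_ju+u\,(a^{ij}D_{ij}\zeta-\zeta_t)-\lambda_0 u\zeta,
$$
which depends only on $u,Du$, is supported in $Q^{(2j+1)}$, and satisfies $\|g\|_{L_{s_{j+1}}(\bR^{d+1})}\le N\|u\|_{W^{1,2}_q(Q_2)}$. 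By Theorem \ref{thm1}(ii) at exponent $s_{j+1}$ there is $\bar w\in W^{1,2}_{s_{j+1}}(\bR^{d+1})$ with $P\bar w-\lambda_0\bar w=g$; identifying $\bar w$ with $w$ (see the obstacle below) and applying Theorem \ref{thm1}(i) at exponent $s_{j+1}$ yields $\|D^2w\|_{L_{s_{j+1}}}\le N\|g\|_{L_{s_{j+1}}}$. Since $\zeta\equiv 1$ on $Q^{(2j+2)}$ and $u_t=a^{ij}D_{ij}u$ there, this gives $\|u\|_{W^{1,2}_{s_{j+1}}(Q^{(2j+2)})}\le N_{j+1}\|u\|_{W^{1,2}_q(Q_2)}$, closing the induction. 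After $m$ steps $u\in W^{1,2}_{s_m}(Q_1)$, and since $s_m\ge p$ a final H\"older estimate on $Q_1$ gives the core claim.

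\textbf{Main obstacle.} The only non‑routine point is the identification $\bar w=w$ in the inductive step: $w=u\zeta$ is only known a priori to be in $W^{1,2}_{s_j}$, while Theorem \ref{thm1} produces its solution $\bar w$ in the higher space $W^{1,2}_{s_{j+1}}$; and one cannot circumvent this by mollifying $u$, because the resulting commutator error $P(u^{(\varepsilon)}\zeta)$ is controlled only in $L_{s_j}$, not in the norm $L_{s_{j+1}}$ that would be needed to run the a priori estimate at level $s_{j+1}$. What is required is the consistency of the $W^{1,2}_p$‑solvability for $P$ across exponents: for $g\in L_{s_j}\cap L_{s_{j+1}}$ with compact support, the $W^{1,2}_{s_j}$‑ and $W^{1,2}_{s_{j+1}}$‑solutions of $Ph-\lambda_0h=g$ coincide. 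This is standard — approximate $g$ in both $L_{s_j}$ and $L_{s_{j+1}}$ by $C_0^\infty$ functions, for which the solution lies in every $W^{1,2}_s$ (fast decay at infinity from $\lambda_0>0$), and pass to the limit — and I would record it with a reference to the analogous fact in \cite{Krylov_2005} or \cite{Kr08}. Everything else is bookkeeping with finitely many exponents and cylinders.
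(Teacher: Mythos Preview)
Your approach is essentially the same as the paper's: the paper simply cites Corollary~6.4 of \cite{Krylov_2007_mixed_VMO}, whose proof is precisely the embedding-plus-solvability bootstrap you spell out, observing that Theorem~\ref{thm1} now supplies the needed $L_p$-solvability under Assumption~\ref{assump2}$(\gamma_0)$, and then reduces general $R_0$ to $R_0=1$ by the same parabolic rescaling you use for $r$. Your identification of the cross-exponent consistency of solutions as the only non-routine point is accurate, and the resolution you sketch is the standard one.
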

\begin{proof}
First we assume $R_0=1$. In this case, the lemma is proved in Corollary 6.4 of \cite{Krylov_2007_mixed_VMO} with the only difference that the coefficients $a^{ij}$ are assumed to be in ${\rm VMO}_x$ in that paper. The proof of Corollary 6.4 \cite{Krylov_2007_mixed_VMO} uses the $L_p$ solvability of equations with ${\rm VMO}_x$ coefficients. Since the solvability is already established with coefficients satisfying Assumption \ref{assump2} ($\gamma_0$) with a $\gamma_0$ depending on $d,p,q$ and $\delta$, we can just reproduce the proof with almost no change.

For general $R_0\in (0,1]$, we make a change of variables $(t,x)\to (R_0^2t, R_0 x)$ and notice that the new coefficients satisfy Assumption \ref{assump2} ($\gamma_0$) with $R_0$ replaced by $1$. The lemma is proved.
\end{proof}

The next theorem improves Theorem \ref{thm4.2}.

\begin{theorem}
                                            \label{thm5.4}
Let $p\in (1,\infty)$ and $\gamma > 0$. Let
$\alpha$ be the constant in
Lemma \ref{lemma 9.10.1}, $\nu=\nu(d,\delta)>1$ be the constant in Lemma \ref{lem4.1} and $\gamma_0=\gamma_0(d,p,2p,\delta)$ be the constants in Lemma \ref{lem5.3}. Assume $b^i=c=0$ and $u\in C_0^\infty$. Then under Assumption \ref{assump2} ($\gamma$) with $\gamma\in (0,\gamma_0]$ the following is true. For any $R\in (0,R_0]$, $\kappa\ge 8$, $r\in (0,R\kappa^{-1}\nu^{-1}]$ and $(t_0,x_0)\in \bR^{d+1}$, there exists a diffeomorphism $\psi\in\Psi$, such that
$$
\sum_{ij>1}\left(|u_{ij}-(u_{ij})_{Q_r(t_0,x_0)}|^p\right)_{Q_r(t_0,x_0)}
+\left(|u_t-(u_t)_{Q_r(t_0,x_0)}|^p\right)_{Q_r(t_0,x_0)}
$$
$$
\le N\kappa^{d+2}
\left(|Pu|^p+|Du|^p\right)_{Q_{\nu\kappa r}(t_0,x_0)}
+N\kappa^{-p\alpha}
\left(|u_t|^p\right)_{Q_{\nu\kappa r}(t_0,x_0)}
$$
\begin{equation}
                                \label{eq06.12.01}
+N(\kappa^{d+2}\gamma^{1/2}+\kappa^{d+2}R^p+\kappa^{-p\alpha})
\left(|D^2u|^p\right)_{Q_{\nu\kappa r}(t_0,x_0)},
\end{equation}
where $u_{ij}$ are defined in \eqref{eq9.23} and $N=N(d,p,\delta)>0$.
\end{theorem}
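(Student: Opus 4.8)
The plan is to re-run the argument of Theorem \ref{thm4.2}, keeping the same diffeomorphism $\psi$ and the same frozen operator $\hat P$ furnished by Assumption \ref{assump2} ($\gamma$), but to replace the crude H\"older step that produced the factor $\gamma^{1/\sigma}(|D^2u|^{p\tau})^{1/\tau}$ with a genuine $L_p$-bound. Concretely, start from inequality \eqref{13.5.42} and the decomposition \eqref{13.5.52}, so that it remains only to estimate $I$ and $J$. The term $I$ is handled exactly as in \eqref{eq10.38}, using \eqref{eq10.54}, and contributes $NR^p(|D^2u|^p)_{Q_{\nu\kappa r}}$. The point of the improvement is entirely in the treatment of $J$.

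For $J$, the new idea is to split $u$ on the relevant cylinder into a part solving the homogeneous equation and a remainder, and to use the self-improving estimate of Lemma \ref{lem5.3}. More precisely, work on $Q:=Q_{\nu\kappa r}(t_0,x_0)\subset Q_R$ (which is why the hypothesis $r\le R\kappa^{-1}\nu^{-1}$ is imposed, so that the whole cylinder lies in $Q_R$ where the $C^\infty_0$ function $u$ is supported and where Assumption \ref{assump2} gives a single $\bar a,\psi$). Write $u = w + h$ on $Q$, where $w\in W^{1,2}_p$ solves $Pw = Pu$ in $Q$ with zero lateral/initial data (solvable with the needed estimate by Theorem \ref{thm4.4} applied after a cutoff, or directly by the $L_p$-solvability already in hand), and $h := u-w$ satisfies $Ph = 0$ in $Q$. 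Then
$$
J \le N\int_{Q}|(\bar a^{ij}(\psi^1)-a^{ij})D_{ij}w|^p\,dx\,dt
+ N\int_{Q}|(\bar a^{ij}(\psi^1)-a^{ij})D_{ij}h|^p\,dx\,dt =: J' + J''.
$$
In $J'$ the coefficient difference is bounded, so $J'\le N\|D^2w\|_{L_p(Q)}^p\le N\|Pu\|_{L_p(Q)}^p$, which folds into the first term on the right of \eqref{eq06.12.01}. For $J''$, apply H\"older with exponents $\sigma=\tau=2$:
$$
J'' \le N\Big(\int_{Q}|\bar a^{ij}(\psi^1)-a^{ij}|^{2p}\,dx\,dt\Big)^{1/2}
\Big(\int_{Q}|D^2 h|^{2p}\,dx\,dt\Big)^{1/2}.
$$
The first factor is $\le (N\gamma|Q|)^{1/2}\le N\gamma^{1/2}(\nu\kappa r)^{(d+2)/2}$ by Assumption \ref{assump2} ($\gamma$) with $\gamma\le\gamma_0$. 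For the second factor, since $Ph=0$ in $Q$ and $\gamma\le\gamma_0=\gamma_0(d,p,2p,\delta)$, Lemma \ref{lem5.3} (applied on a slightly smaller cylinder, after the usual scaling to normalize $r$) gives
$$
\Big(|D^2 h|^{2p}\Big)_{Q_{\nu\kappa r/2}}^{1/(2p)} \le N\Big(|D^2 h|^{p}\Big)_{Q_{\nu\kappa r}}^{1/p}
\le N\Big(|D^2 u|^p\Big)_{Q_{\nu\kappa r}}^{1/p} + N\|Pu\|_{L_p(Q)}\cdot(\ldots),
$$
where in the last step $D^2h = D^2u - D^2w$ and $\|D^2w\|_{L_p(Q)}\le N\|Pu\|_{L_p(Q)}$. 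Converting back to integrals over $Q$ and multiplying the two factors, $J''\le N\kappa^{d+2}\gamma^{1/2}\|D^2u\|_{L_p(Q)}^p + N\kappa^{d+2}\|Pu\|_{L_p(Q)}^p$ after dividing by $|Q|$ and absorbing powers of $\nu$ and $\kappa$ into the constant as in Theorem \ref{thm4.2}. Assembling $I$, $J'$, $J''$ into \eqref{13.5.52} and then into \eqref{13.5.42}, and noting $(|D^2u|^p+|u_t|^p)$ on the right of \eqref{13.5.42} splits via $u_t = a^{ij}D_{ij}u - Pu$ (with $b=c=0$) so that the $\kappa^{-p\alpha}(|D^2u|^p)$ contribution is kept and the $\kappa^{-p\alpha}(|u_t|^p)$ is kept as written, yields \eqref{eq06.12.01}.

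The main obstacle is the bookkeeping in the decomposition $u=w+h$: one must make sure that $w$ (the solution of the inhomogeneous problem on the finite cylinder $Q$) actually exists in $W^{1,2}_p$ with the estimate $\|D^2w\|_{L_p(Q)}\le N\|Pu\|_{L_p(Q)}$ without circularity, and that Lemma \ref{lem5.3} is invoked on a cylinder strictly inside $Q$ so that the ``$Q_{2r}\to Q_r$'' format is respected after rescaling. The cleanest route is to choose $R$ (equivalently $\gamma$, $\kappa$) so that $Q\subset Q_R$, extend $Pu$ trivially, and either quote the global $L_p$-solvability of $P_0$-type/divergence-form equations already used in Section~\ref{sec3} together with a perturbation/cutoff, or simply observe that on $Q$ the coefficients satisfy Assumption~\ref{assump2}($\gamma$) and apply the (already proved, for this $p$) a priori estimate of Theorem~\ref{thm4.4} to $w\eta$. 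Once $w$ is in place, everything else is the same H\"older-plus-Lemma-\ref{lem5.3} computation, and the replacement $\sigma=\tau=2$ is what turns the old $\gamma^{1/\sigma}$ into the sharp $\gamma^{1/2}$ with no extra power of $D^2u$ beyond the first on the right-hand side.
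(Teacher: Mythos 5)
Your overall strategy coincides with the paper's: decompose $u=w+h$ so that $Ph=0$ on the relevant cylinder, control $w$ by the $L_p$-solvability already established, and use H\"older with $\sigma=\tau=2$ together with the reverse-H\"older estimate of Lemma \ref{lem5.3} applied to the $P$-caloric part to turn the old $\gamma^{1/\sigma}\left(|D^2u|^{p\tau}\right)^{1/\tau}$ into $\gamma^{1/2}\left(|D^2u|^p\right)$. The only organizational difference is that you keep the oscillation estimate \eqref{13.5.42} for $u$ and split only inside $J$, whereas the paper applies the Theorem \ref{thm4.2} machinery to $h$ and adds the $w$-contribution by the triangle inequality; that difference is cosmetic. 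However, two steps need repair. First, the solution $w$ of $Pw=Pu$ in $Q$ with \emph{zero lateral data} is not available from anything proved: no initial--boundary value problem on a bounded cylinder is established for these coefficients, and applying Theorem \ref{thm4.4} to $w\eta$ gives only an a priori bound (with an extra $\|Dw\|_{L_p}$ term), not existence. Your fallback is the right one and is exactly what the paper does: multiply $f=Pu$ by a cutoff $\eta$ supported near $Q_{\nu\kappa r}$ and solve the Cauchy problem on a time strip of fixed length via Theorem \ref{thm1b}, so that $\|w\|_{W^{1,2}_p}\le N\|f\|_{L_p(Q_{\nu\kappa r})}$ with $N$ independent of $r$.

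Second, and more substantively, your H\"older factor is $\bigl(\int_{Q_{\nu\kappa r}}|D^2h|^{2p}\bigr)^{1/2}$, i.e.\ an integral over the \emph{full} cylinder $Q=Q_{\nu\kappa r}$, while Lemma \ref{lem5.3} is an interior estimate: with $Ph=0$ only in $Q_{\nu\kappa r}$ it controls $\int_{Q_{\nu\kappa r/2}}|D^2h|^{2p}$ and says nothing about the $L_{2p}$-norm of $D^2h$ near the parabolic boundary of $Q$, which cannot in general be bounded by its $L_p$-norm there. Invoking the lemma ``on a slightly smaller cylinder'' therefore does not close the estimate of $J''$ as you have set it up. The fix is the paper's: run the Lemma \ref{lem4.1}/Theorem \ref{thm4.2} oscillation step at scale $\kappa/2$ (this is precisely why the hypothesis $\kappa\ge 8$ appears), so that the cylinder carrying the $L_{2p}$-average of $D^2h$ is strictly contained in the region where $Ph=0$, and then enlarge all averages on the right back to $Q_{\nu\kappa r}$ at the cost of a fixed factor. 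With these two adjustments your argument becomes the paper's proof.
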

\begin{proof}
We assume without loss of generality that $(t_0,x_0)=(0,0)$. We may also assume that $a^{ij}$ are infinitely differentiable
by using standard mollifications if necessary.

Let $f:=Pu$. Take a cutoff function $\eta\in C_0^\infty$ such that $\eta=1$ on $Q_{\nu\kappa r/2}$ and $\eta=0$ outside the closure of $Q_{\nu\kappa r}\cup (-Q_{\nu\kappa r})$. Due to Theorem \ref{thm1b} there exists a unique solution $w\in W^{1,2}_p((-1,0)\times \bR^d)$ of
$$
Pw=f\eta\,\,\,\text{on}\,\,(-1,0)\times \bR^d,\quad w(-1,\cdot)=0.
$$
By the classical theory, we have $w\in C^\infty((-1,0)\times \bR^d)$. Let $h=u-w$, which is also smooth and satisfies
$$
Ph=f(1-\eta)\,\,\,\text{on}\,\,(-1,0)\times \bR^d,
\quad Ph=0\,\,\,\text{on}\,\,Q_{\nu\kappa r/2}.
$$

First we estimate $h$. Choose $Q$ to be  $Q_{\kappa
r/2}$. By Assumption \ref{assump2} ($\gamma$), we can find $\psi\in \Psi$ and $\bar a=\bar a(s)\in \mathcal A$ satisfying \eqref{eq3.07}. By repeating the proof of Theorem \ref{thm4.2} with $h$ in place of $u$ (recall $\kappa/2\ge 4$), we get
$$
\sum_{ij>1}\left(|h_{ij}-(h_{ij})_{Q_r}|^p\right)_{Q_r}
+\left(|h_t-(h_t)_{Q_r}|^p\right)_{Q_r}
$$
$$
\le N\kappa^{d+2}
\left(|Dh|^p\right)_{Q_{\nu\kappa r/2}}
+N\kappa^{d+2}\gamma^{1/2}
\left(|D^2 h|^{2p}\right)_{Q_{\nu\kappa r/2}}^{1/2}
$$
\begin{equation*}
+N\kappa^{-p\alpha}
\left(|h_t|^p\right)_{Q_{\nu\kappa r/2}}+N(\kappa^{d+2}R^p+\kappa^{-p\alpha})
\left(|D^2h|^p\right)_{Q_{\nu\kappa r/2}}
\end{equation*}
$$
\le N\kappa^{d+2}
\left(|Dh|^p\right)_{Q_{\nu\kappa r}}
+N\kappa^{d+2}\gamma^{1/2}
\left(|D^2 h|^{p}\right)_{Q_{\nu\kappa r}}
$$
\begin{equation}
                                \label{eq06.5.05}
+N\kappa^{-p\alpha}
\left(|h_t|^p\right)_{Q_{\nu\kappa r}}+N(\kappa^{d+2}R^p+\kappa^{-p\alpha})
\left(|D^2h|^p\right)_{Q_{\nu\kappa r}},
\end{equation}
where $h_{ij}$ are defined in the same way as $u_{ij}$. In the last inequality we used $Ph=0$ on $Q_{\nu\kappa r/2}$ and Lemma \ref{lem5.3}.

Next we estimate $w$. Due to Theorem \ref{thm1b}, we have
$$
\|w\|_{W^{1,2}_p((-1,0)\times \bR^d)}\le N\|f\eta\|_{L_p((-1,0)\times \bR^d)}\le N\|f\|_{L_p(Q_{\nu\kappa r})}.
$$
Therefore,
\begin{equation}
                                        \label{eq4.27b}
(|w_t|^p+|Dw|^p+|D^2 w|^p)_{Q_r}\le N\kappa^{d+2}(|f|^p)_{Q_{\nu\kappa r}},
\end{equation}
\begin{equation}
                                        \label{eq4.28b}
(|w_t|^p+|Dw|^p+|D^2 w|^p)_{Q_{\nu\kappa r}}\le N(|f|^p)_{Q_{\nu\kappa r}}.
\end{equation}

Since $|w_{ij}|\le N|Dw|+N|D^2 w|$, combining \eqref{eq06.5.05}, \eqref{eq4.27b} and \eqref{eq4.28b} we then get by using the triangle inequality,
$$
\sum_{ij>1}\left(|u_{ij}-(u_{ij})_{Q_r}|^p\right)_{Q_r}
+\left(|u_t-(u_t)_{Q_r}|^p\right)_{Q_r}
$$
$$
\le N\sum_{ij>1}\left(|h_{ij}-(h_{ij})_{Q_r}|^p\right)_{Q_r}
+N\left(|h_t-(h_t)_{Q_r}|^p\right)_{Q_r}+N(|w_t|^p+|Dw|^p+|D^2w|^p)_{Q_r}
$$
$$
\le N\kappa^{d+2}
\left(|Dh|^p\right)_{Q_{\nu\kappa r}}
+N(\kappa^{d+2}\gamma^{1/2}+\kappa^{d+2}R^p+\kappa^{-p\alpha})
\left(|D^2 h|^{p}\right)_{Q_{\nu\kappa r}}
$$
$$
+N\kappa^{-p\alpha}
\left(|h_t|^p\right)_{Q_{\nu\kappa r}}+N\kappa^{d+2}(|f|^p)_{Q_{\nu\kappa r}}
$$
$$
\le N\kappa^{d+2}
\left(|Du|^p\right)_{Q_{\nu\kappa r}}
+N(\kappa^{d+2}\gamma^{1/2}+\kappa^{d+2}R^p+\kappa^{-p\alpha})
\left(|D^2 u|^{p}\right)_{Q_{\nu\kappa r}}
$$
$$
+N\kappa^{-p\alpha}
\left(|u_t|^p\right)_{Q_{\nu\kappa r}}+N\kappa^{d+2}(|f|^p)_{Q_{\nu\kappa r}},
$$
which is exactly the right-hand side of \eqref{eq06.12.01}. The theorem is proved.
\end{proof}

The next corollary can be deduced from Theorem \ref{thm5.4}. 
\begin{corollary}
                                                \label{cor5.5}
Under the assumptions of Theorem \ref{thm5.4}, for any $R\in (0,R_0]$, $\kappa\ge 8$ and any interval $[S,T)$ such that $(T-S)^{1/2}=:r\in (0,R\kappa^{-1}\nu^{-1}]$, we have
$$
\dashint_{(S,T)}\dashint_{(S,T)}\abs{
\varphi(t)-\varphi(s)}^p\,dt\,ds
\le
N\kappa^{d+2}\dashint_{(T-(\nu\kappa r)^2,T)} \zeta(t)^p\,dt
$$
\begin{equation}
                                                    \label{eq12.47pm}
+
N(\kappa^{d+2}R^p+\kappa^{-p\alpha}+
\kappa^{d+2}\gamma^{1/2})
\dashint_{(T-(\nu\kappa r)^2,T)}\rho(t)^p\,dt,
\end{equation}
where $N=N(d,p,\delta)>0$, and
$$
\zeta(t)^p=\int_{\bR^d}|Pu(t,x)|^p+|Du(t,x)|^p\,dx,
\quad \rho(t)^p=\int_{\bR^d}|D^2 u(t,\cdot)|^p+|u_t(t,\cdot)|^p\,dx,
$$
$$
\varphi(t)^p=\int_{\bR^d}\dashint_{B_r(y)}|u_t(t,\cdot)|^p+\sum_{ij>1}|u^{(y)}_{ij}(t,\cdot)|^p\,dx\,dy.
$$
Here for each $y\in \bR^d$, $u^{(y)}_{ij}$ are defined in \eqref{eq9.23} with $Q=Q_{\kappa r/2}(T,y)$ in Assumption \ref{assump2}.
\end{corollary}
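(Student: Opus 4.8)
The plan is to derive \eqref{eq12.47pm} by applying Theorem \ref{thm5.4} with $(t_0,x_0)=(T,y)$ for each fixed $y\in\bR^d$ and then integrating the resulting estimates in $y$. Since $r^2=T-S$, the small cylinder centered at $(T,y)$ is exactly $Q_r(T,y)=(S,T)\times B_r(y)$ and the enlarged one is $Q_{\nu\kappa r}(T,y)=(T-(\nu\kappa r)^2,T)\times B_{\nu\kappa r}(y)$; the hypotheses $\kappa\ge 8$ and $r\in(0,R\kappa^{-1}\nu^{-1}]$ are precisely those required to invoke Theorem \ref{thm5.4}. For each $y$ this produces a diffeomorphism $\psi=\psi^{(y)}\in\Psi$, coming from Assumption \ref{assump2} applied to $Q_{\kappa r/2}(T,y)$, the corresponding functions $u^{(y)}_{ij}$ from \eqref{eq9.23}, and the bound whose left-hand side is
\[
\sum_{ij>1}\big(|u^{(y)}_{ij}-(u^{(y)}_{ij})_{Q_r(T,y)}|^p\big)_{Q_r(T,y)}+\big(|u_t-(u_t)_{Q_r(T,y)}|^p\big)_{Q_r(T,y)}
\]
and whose right-hand side is the right-hand side of \eqref{eq06.12.01} with $(t_0,x_0)=(T,y)$.

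Next I would bound the left-hand side of \eqref{eq12.47pm} by the $y$-integral of the quantity just displayed. The key observation is that $\varphi(t)$ is nothing but the norm of the vector-valued function with components $u_t(t,\cdot)$ and $(u^{(y)}_{ij}(t,\cdot))_{ij>1}$ in $L_p$ of $\bR^d_x\times\bR^d_y$, taken with the measure $|B_r|^{-1}\mathbf{1}_{\{x\in B_r(y)\}}\,dx\,dy$ together with counting measure on the finite index set. The triangle inequality in this $L_p$ space gives
\[
|\varphi(t)-\varphi(s)|^p\le\int_{\bR^d}\dashint_{B_r(y)}\Big(|u_t(t,x)-u_t(s,x)|^p+\sum_{ij>1}|u^{(y)}_{ij}(t,x)-u^{(y)}_{ij}(s,x)|^p\Big)\,dx\,dy.
\]
Averaging over $(s,t)\in(S,T)^2$, then for fixed $x,y$ using the elementary inequality $\dashint_I\dashint_I|g(t)-g(s)|^p\,dt\,ds\le 2^p\dashint_I|g(t)-c|^p\,dt$ (valid for any interval $I$ and any constant $c$) in the time variable with $c=(u_t)_{Q_r(T,y)}$, resp. $c=(u^{(y)}_{ij})_{Q_r(T,y)}$, and finally averaging over $x\in B_r(y)$ so that $\dashint_{B_r(y)}\dashint_{(S,T)}$ becomes $\dashint_{Q_r(T,y)}$, I obtain
\[
\dashint_{(S,T)}\dashint_{(S,T)}|\varphi(t)-\varphi(s)|^p\,dt\,ds\le N\int_{\bR^d}\Big[\big(|u_t-(u_t)_{Q_r(T,y)}|^p\big)_{Q_r(T,y)}+\sum_{ij>1}\big(|u^{(y)}_{ij}-(u^{(y)}_{ij})_{Q_r(T,y)}|^p\big)_{Q_r(T,y)}\Big]\,dy.
\]

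Then I would insert into this $y$-integral the estimate from Theorem \ref{thm5.4} recorded in the first paragraph. Each resulting term is of the form $(|g|^p)_{Q_{\nu\kappa r}(T,y)}$ with $g\in\{Pu,Du,u_t,D^2u\}$, and Fubini's theorem gives $\int_{\bR^d}(|g|^p)_{Q_{\nu\kappa r}(T,y)}\,dy=\dashint_{(T-(\nu\kappa r)^2,T)}\int_{\bR^d}|g(t,x)|^p\,dx\,dt$, since for fixed $(t,x)$ the set $\{y:x\in B_{\nu\kappa r}(y)\}$ has measure $|B_{\nu\kappa r}|=|Q_{\nu\kappa r}|(\nu\kappa r)^{-2}$. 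Recognizing $\int_{\bR^d}(|Pu|^p+|Du|^p)\,dx=\zeta(t)^p$ and $\int_{\bR^d}|u_t|^p\,dx\le\rho(t)^p$, $\int_{\bR^d}|D^2u|^p\,dx\le\rho(t)^p$, and then combining the coefficients $\kappa^{-p\alpha}$, $\kappa^{d+2}R^p$ and $\kappa^{d+2}\gamma^{1/2}$ in front of the $\rho^p$-terms, yields \eqref{eq12.47pm}.

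The triangle-inequality reduction and the Fubini bookkeeping are routine. The one point requiring care, which I regard as the only genuine obstacle, is that the selection $y\mapsto\psi^{(y)}$, hence $(x,y)\mapsto u^{(y)}_{ij}(t,x)$, must be measurable in $y$ for $\varphi$ to be well defined; this is handled by a standard measurable selection argument, as in \cite{Kr08}.
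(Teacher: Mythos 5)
Your proposal is correct and takes essentially the same route as the paper's proof: the reverse triangle inequality in $L_p$ together with the elementary oscillation bound reduces the left-hand side to $\int_{\bR^d}\big[\sum_{ij>1}(|u^{(y)}_{ij}-(u^{(y)}_{ij})_{Q_r(T,y)}|^p)_{Q_r(T,y)}+(|u_t-(u_t)_{Q_r(T,y)}|^p)_{Q_r(T,y)}\big]\,dy$, which is then estimated by Theorem \ref{thm5.4} applied at $(T,y)$ and Fubini in $y$, exactly as in the paper. The measurability-in-$y$ point you flag is not addressed in the paper's proof, but otherwise your argument and bookkeeping coincide with it.
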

\begin{proof}
By the triangle inequality, the left-hand side of \eqref{eq12.47pm} is less than a constant $N$ times
$$
\dashint_{(S,T)}\dashint_{(S,T)}\int_{\bR^d}
\dashint_{B_r(y)}|u_t(t,\cdot)-u_t(s,\cdot)|^p+\sum_{ij>1}|u^{(y)}_{ij}(t,\cdot)-u^{(y)}_{ij}(s,\cdot)|^p\,dx\,dy\,dt\,ds
$$
$$
\le N\int_{\bR^d}
\sum_{ij>1}\left(|u^{(y)}_{ij}-(u^{(y)}_{ij})_{Q_r(T,y)}|^p
\right)_{Q_r(T,y)}
+\left(|u_t-(u_t)_{Q_r(T,y)}|^p\right)_{Q_r(T,y)}\,dy.
$$
Due to Theorem \ref{thm5.4}, the last expression is less than $N$ times
$$
\int_{\bR^d}\kappa^{d+2}
\left(|Pu|^p+|Du|^p\right)_{Q_{\nu\kappa r}(T,y)}
$$
$$
+(\kappa^{d+2}(R^p+\gamma^{\frac{1}{2}})+\kappa^{-p\alpha})
\left(|D^2 u|^{p}+|u_t|^{p}\right)_{Q_{\nu\kappa r}(T,y)}\,dy,
$$
which is less than the right-hand side of \eqref{eq12.47pm}.
\end{proof}



Finally, we prove the following estimate which implies Theorem \ref{thm3} in the same way as Theorem \ref{thm4.4} implies Theorem \ref{thm1}.

\begin{theorem}
 				\label{thm5.7}
Let $1<p<q<\infty$. Assume $b^i=c=0$. Then there exist positive constants
$\gamma$, $N$ and $R\in (0,1]$ depending only on $d,p,q$ and $\delta$ such that under Assumption \ref{assump2} ($\gamma$), for any $u\in C_0^\infty$
vanishing outside $(-R^4R_0^2,0)\times \bR^d$, we have
\begin{equation}
 				\label{eq6.5.56}
\|u_t\|_{L_{q,p}}+\|D^2u\|_{L_{q,p}}\le N\|Pu\|_{L_{q,p}}+N\|Du\|_{L_{q,p}}.
\end{equation}
\end{theorem}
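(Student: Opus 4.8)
The plan is to run the localized Fefferman--Stein argument of Section~\ref{sec4} once more, but now in the single time variable $t$, with the $L_p$--norms in the spatial variables of $u_t$, $D^2u$, etc.\ playing the role of the scalar functions fed into the sharp--function machinery. Write $f=Pu$ and, as usual, assume the $a^{ij}$ are smooth. With $\varphi,\zeta,\rho$ the functions of $t$ introduced in Corollary~\ref{cor5.5}, one has $\|u_t\|_{L_{q,p}}+\|D^2u\|_{L_{q,p}}\le N\|\rho\|_{L_q(\bR)}$ and $\|\zeta\|_{L_q(\bR)}\le N(\|Pu\|_{L_{q,p}}+\|Du\|_{L_{q,p}})$, so it is enough to prove $\|\rho\|_{L_q(\bR)}\le N\|\zeta\|_{L_q(\bR)}$. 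I would combine three ingredients: (a) a pointwise-in-$t$ comparison $\rho(t)\le N\varphi(t)+N\zeta(t)$ and its near-converse $\varphi(t)\le N\rho(t)+N\zeta(t)$; (b) the oscillation-in-$t$ estimate of Corollary~\ref{cor5.5}; (c) a one-dimensional version of the generalized (two-function) Fefferman--Stein theorem, Theorem~\ref{generalStein}, with oscillation exponent $p$ and target exponent $q$ --- which is precisely where the hypothesis $p<q$ enters.

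For (a): fixing $t$ and $y$, let $\psi=\psi^{(y)}$ be the diffeomorphism attached to $Q_{\kappa r/2}(T,y)$ and $v(t,\cdot)=u(t,\phi^{(y)}(\cdot))$. The equation $Pu=f$ transforms into an equation for $v$ from which $D_{y^1y^1}v$ at $\psi^{(y)}(x)$ is expressed through $u_t$, $\{u^{(y)}_{ij}:ij>1\}$, $Du$ and $f$; substituting into the chain-rule identity for $D^2u(t,x)$ in terms of the $u^{(y)}_{ij}(t,x)$ and $Du(t,x)$ gives, for $x$ near $y$, a pointwise bound $|u_t|+|D^2u|\le N\sum_{ij>1}|u^{(y)}_{ij}|+N|u_t|+N|Du|+N|f|$, the $t$--sliced analogue of \eqref{eq11.22}. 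Integrating in $x$ over $B_r(y)$, averaging in $y$ over $\bR^d$ and applying Fubini yields $\rho(t)^p\le N\varphi(t)^p+N\zeta(t)^p$; the reverse inequality is immediate from $|u^{(y)}_{ij}|\le N|Du|+N|D^2u|$ (as used for $w_{ij}$ in the proof of Theorem~\ref{thm5.4}).

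Then I would set $U(t)=\rho(t)$, $V(t)=N(\rho(t)+\zeta(t))$, and, for each dyadic time interval $C=[S,T)$ (the $t$--component of the parabolic dyadic cubes $\bC_n$, with $r=(T-S)^{1/2}$), $U^C(t)=N(\varphi_C(t)+\zeta(t))$ with $\varphi_C$ built from the diffeomorphisms attached to $Q_{\kappa r/2}(T,y)$. Part (a) gives $|U|\le U^C\le V$. Corollary~\ref{cor5.5}, together with the trivial bound $(\dashint_C\dashint_C|\zeta(t)-\zeta(s)|^p)^{1/p}\le N\bM_t(\zeta^p)(t)^{1/p}$, shows that for every dyadic $C$ of length at most $(R\kappa^{-1}\nu^{-1})^2$ the $p$--oscillation of $U^C$ on $C$ is bounded by $H(t)^{1/p}$ for $t\in C$, where $H=\kappa^{d+2}\bM_t(\zeta^p)+(\kappa^{d+2}R^p+\kappa^{d+2}\gamma^{1/2}+\kappa^{-p\alpha})\bM_t(\rho^p)$ and $\bM_t$ is the one-dimensional maximal function. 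The hypothesis that $u$ vanishes outside a time-slab of thickness $R^4R_0^2$ makes the restriction to short intervals harmless: choosing $R$ small relative to $\kappa$ and $\nu$, the dyadic intervals on which the oscillation estimate is genuinely needed all fall below the threshold of Corollary~\ref{cor5.5}, and on longer intervals the required inequality holds trivially. Applying the one-dimensional generalized Fefferman--Stein theorem (exponents $p<q$) and then the Hardy--Littlewood maximal theorem on $L_{q/p}(\bR)$ (recall $q/p>1$) gives $\|\rho\|_{L_q(\bR)}\le N\kappa^{(d+2)/p}\|\zeta\|_{L_q(\bR)}+N(\kappa^{d+2}R^p+\kappa^{d+2}\gamma^{1/2}+\kappa^{-p\alpha})^{1/p}\|\rho\|_{L_q(\bR)}$. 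Fixing $\kappa$ large, then $R$ and $\gamma$ small so that the second coefficient is at most $1/2$, one absorbs that term and obtains \eqref{eq6.5.56}.

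The main obstacle I anticipate is step (c): arranging the proxy $U^C$ to be genuinely measurable and verifying the two-function oscillation hypothesis uniformly over all dyadic intervals --- reconciling the ``$p$-th power oscillation of $\varphi$'' delivered by Corollary~\ref{cor5.5} with the exact form demanded by the Fefferman--Stein theorem, and disposing of the long-interval regime (where the smallness-in-$r$ hypothesis is unavailable) by means of the compact support of $u$. Given Sections~\ref{sec3}--\ref{sec4} and Corollary~\ref{cor5.5}, the comparison (a) and the final absorption are routine.
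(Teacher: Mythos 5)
Your proposal is correct and follows essentially the same route as the paper's proof: a one-dimensional (in $t$) application of Theorem \ref{generalStein} with $U(t)=\|u_t(t,\cdot)\|_{L_p}+\|D^2u(t,\cdot)\|_{L_p}$, a proxy $U^C$ built from $\varphi$ on short dyadic intervals via Corollary \ref{cor5.5}, the pointwise comparison coming from \eqref{eq11.22} and $|u^{(y)}_{ij}|\le N|Du|+N|D^2u|$, then the Hardy--Littlewood maximal theorem in $L_{q/p}$ and absorption by choosing $\kappa$ large and $\gamma,R$ small. The only slight imprecision is that the long-interval regime is not quite ``trivial'': the paper handles it by taking $U^C=V$ there and applying H\"older's inequality against the thin time-support of $u$, which produces the extra $(R\kappa)^{2(1-1/p)}$ term that must also be included in $H$ and absorbed at the end (this is exactly why $R^4R_0^2$ appears in the support hypothesis), but since you identify the compact-support mechanism and choose $R$ last, this is bookkeeping rather than a gap.
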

\begin{proof}
Set $f=Pu$. Recall the definitions of $\varphi$, $\rho$ and $\zeta$ in Corollary \ref{cor5.5}. Let $\gamma\in (0,\gamma_0]$, $\kappa\in [8,\infty)$ and $R\in (0,1]$ be numbers to be chosen later, where $\gamma_0=\gamma_0(d,p,2p,\delta)$ is taken from Lemma \ref{lem5.3}. Assume $u$ vanishes outside $(-R^4R_0^2,0)\times \bR^d$. Denote $f=Pu$ and
$$
\sfg=\sfg(t)=\left(\kappa^{\frac {d+2} p}R_0R+\kappa^{-\alpha}+
\kappa^{\frac {d+2} p}\gamma^{\frac 1 {2p}}+(R\kappa)^{2(1-\frac 1 p)}\right)(\bM (\rho^p))^{\frac 1 p}
$$
$$
+N\left(\kappa^{\frac {d+2} p}+(R\kappa)^{2(1-\frac 1 p)}\right)(\bM (\zeta^p))^{\frac 1 p}.
$$

Let $\bC_n$ be the filtration of partitions given by dyadic intervals
$$
\left\{[j2^{-2n},(j+1)2^{-2n}),j\in\bZ\right\}.
$$
For any $C=[S,T)\in \bC_n$, we set
$$
U(t)=\|u_t(t,\cdot)\|_{L_p(\bR^d)}+\|D^2 u(t,\cdot)\|_{L_p(\bR^d)},
$$
$$
V(t)=\|u_t(t,\cdot)\|_{L_p(\bR^d)}+\|D^2 u(t,\cdot)\|_{L_p(\bR^d)}+\|Du(t,\cdot)\|_{L_p(\bR^d)}
+\|f(t,\cdot)\|_{L_p(\bR^d)},
$$
and
\begin{equation*}	
U^C(t)=\left\{
  \begin{array}{ll}
    \varphi(t)+\|Du(t,\cdot)\|_{L_p(\bR^d)}
+\|f(t,\cdot)\|_{L_p(\bR^d)} & \hbox{if $2^{-n}\le RR_0\kappa^{-1}\nu^{-1}$} \\
    V(t) & \hbox{otherwise}
  \end{array}.
\right.
\end{equation*}
It is easy to see that $U\le NU^C\le NV$ in $C$. We claim
\begin{equation}
                        \label{eq2.41pm}
(U^C-(U^C)_C)_C\le N\left(\sfg+\|Du(t,\cdot)\|_{L_p(\bR^d)}
+\|f(t,\cdot)\|_{L_p(\bR^d)}\right)_C.
\end{equation}
Indeed, if $2^{-n}\le R_0R\kappa^{-1}\nu^{-1}$, by a shift of the origin we get \eqref{eq2.41pm} from Corollary \ref{cor5.5}. Otherwise, we have
$$
(U^C-(U^C)_{C})_{C}\le 2\dashint_{(S,T)}\chi_{(-R_0^2R^4,0)}|V(t)|\,dt
$$
$$
\le 2\left(\dashint_{(S,T)}\chi_{(-R_0^2R^4,0)}\,dt\right)^{1-1/p}
\left(\dashint_{(S,T)}|V(t)|^p\,dt\right)^{1/p}
$$
$$
\le
N(R\kappa)^{2(1-1/p)}\left(\bM (\rho^p+\zeta^p)(t_0)\right)^{1/p},
$$
for any $t_0\in C$. This proves the claim.

Now by Theorem \ref{generalStein} with $d=0$, $q$ in place of $p$ and
$$
H(t):=\sfg(t)+\|Du(t,\cdot)\|_{L_p(\bR^d)}
+\|f(t,\cdot)\|_{L_p(\bR^d)},
$$
we get
$$
\|u_t\|_{L_{q,p}}^q+\|D^2 u\|_{L_{q,p}}^q
\le N\|U\|_{L_{q}}^q\le N\|H\|_{L_q}\|V\|_{L_q}^{q-1}
\le N(\epsilon)\|H\|_{L_q}^q+\epsilon \|V\|_{L_q}^{q}.
$$
By taking a small $\epsilon>0$, it holds that
\begin{equation}
 					\label{eq6.11.00}
\|u_t\|_{L_{q,p}}^q+\|D^2 u\|_{L_{q,p}}^q
\le N\|\sfg\|_{L_q}^q+N\|Du\|_{L_{q,p}}^{q}+N\|f\|_{L_{q,p}}^{q}.
\end{equation}
We use the definition of $\sfg$ and the Hardy-Littlewood maximal function theorem (recall $p>q$) to deduce from \eqref{eq6.11.00}
$$
\|u_t\|_{L_{q,p}}+\|D^2 u\|_{L_{q,p}}\le
N\left(\kappa^{\frac {d+2} p}+(R\kappa)^{2(1-\frac 1 p)}\right)(\|f\|_{L_{q,p}}+\|Du\|_{L_{q,p}})
$$
\begin{equation}
 					\label{eq6.11.05}
+N\left(\kappa^{\frac {d+2} p}R_0R+\kappa^{-\alpha}+
\kappa^{\frac {d+2} p}\gamma^{\frac 1 {2p}}+(R\kappa)^{2(1-\frac 1 p)}\right)(\|u_t\|_{L_{q,p}}+\|D^2u\|_{L_{q,p}}).
\end{equation}
By choosing $\kappa$ sufficiently large, then $\gamma$ and $R$ sufficiently small in \eqref{eq6.11.05} such that
$$
N\left(\kappa^{\frac {d+2} p}R+\kappa^{-\alpha}+
\kappa^{\frac {d+2} p}\gamma^{\frac 1 {2p}}+(R\kappa)^{2(1-\frac 1 p)}\right)\le 1/2,
$$
we come to \eqref{eq6.5.56}. The theorem is proved.
\end{proof}

\section*{Acknowledgement}

The author is sincerely grateful to Nicolai V. Krylov for very helpful
comments on the first draft of the article.


\end{document}